\documentclass[twoside,12pt]{article} 
\usepackage[margin=1.0in]{geometry}

\usepackage{amssymb,amsmath,amsthm}
\usepackage{mathtools}
\usepackage{bm,lscape}
\usepackage{bbm}
\usepackage{algpseudocode}
\usepackage[ruled,vlined]{algorithm2e}
\usepackage{mathrsfs,dsfont}
\usepackage{wasysym}
\RequirePackage[
    colorlinks=true,
    linkcolor=blue,
    urlcolor=blue,
    citecolor=blue]{hyperref}
\RequirePackage{natbib}
\usepackage[utf8]{inputenc}
\usepackage[english]{babel}

\usepackage{graphicx}
\usepackage{color}
\usepackage{rotating}
\usepackage{authblk}
\usepackage{appendix}

\allowdisplaybreaks

\allowdisplaybreaks



\newcommand{\R}{\mathbb{R}}

\newcommand{\N}{\mathbb{N}}




\newcommand{\ddr}{\mathrm{d}}

\newtheorem{thm}{Theorem}[section]
\newtheorem{lem}[thm]{Lemma}
\newtheorem{defi}[thm]{Definition}

\newtheorem{prp}[thm]{Proposition}

\newtheorem{cor}[thm]{Corollary}
\newtheorem{remark}[thm]{Remark}

\providecommand{\keywords}[1]
{
  \small	
  \textbf{\textit{Keywords:}} #1
}

\begin{document}

\title{Laws of large numbers and central limit theorem for Ewens-Pitman model}


\author[1]{Claudia Contardi\thanks{claudia.contardi01@universitadipavia.it}}
\author[1]{Emanuele Dolera\thanks{emanuele.dolera@unipv.it}}
\author[3]{Stefano Favaro\thanks{stefano.favaro@unito.it}}
\affil[1]{\small{Department of Mathematics, University of Pavia, Italy}}
\affil[2]{\small{Department of Economics and Statistics \& Collegio Carlo Alberto, University of Torino, Italy}}

\maketitle

\begin{abstract}
The Ewens-Pitman model is a distribution for random partitions of the set $\{1,\ldots,n\}$, with $n\in\mathbb{N}$, indexed by parameters $\alpha \in [0,1)$ and $\theta>-\alpha$, such that $\alpha=0$ is the Ewens model in population genetics. The large $n$ asymptotic behaviour of the number $K_{n}$ of blocks in the Ewens-Pitman random partition has been extensively investigated in terms of almost-sure and Gaussian fluctuations, which show that $K_{n}$ scales as $\log n$ and $n^{\alpha}$ depending on whether $\alpha=0$ or $\alpha\in(0,1)$, providing non-random and random limiting behaviours, respectively. In this paper, we study the large $n$ asymptotic behaviour of $K_{n}$ when the parameter $\theta$ is allowed to depend linearly on $n\in\mathbb{N}$, a non-standard asymptotic regime first considered for $\alpha=0$ in Feng (\textit{The Annals of Applied Probability}, \textbf{17}, 2007). In particular, for $\alpha\in[0,1)$ and $\theta=\lambda n$, with $\lambda>0$, we establish a law of large numbers (LLN) and a central limit theorem (CLT) for $K_{n}$, which show that $K_{n}$ scales as $n$, providing non-random limiting behaviours. Depending on whether $\alpha=0$ or $\alpha\in(0,1)$, our results rely on different arguments. For $\alpha=0$ we rely on the representation of $K_{n}$ as a sum of independent, but not identically distributed, Bernoulli random variables, which leads to a refinement of the CLT in terms of a Berry-Esseen theorem. Instead, for $\alpha\in(0,1)$, we rely on a compound Poisson construction of $K_{n}$, leading to prove LLNs, CLTs and Berry-Esseen theorems for the number of blocks of the negative-Binomial compound Poisson random partition, which are of independent interest.
\end{abstract}

\keywords{Berry-Esseen theorem; central limit theorem; compound Poisson random partition; Ewens-Pitman model; law of large numbers; Mittag-Leffler distribution}


\section{Introduction}

The Ewens-Pitman model for random partitions first appeared in \citet{Pit(95)} as a two-parameter generalization of the Ewens model in population genetics \citep{Ewe(72),Cra(16)}. For $n\in\mathbb{N}$, let $\Pi_{n}$ be a random partition of $\{1,\ldots,n\}$ into $K_{n}\in\{1,\ldots,n\}$ blocks of sizes or frequencies $\mathbf{N}_{n}=(N_{1,n},\ldots,N_{K_{n},n})\in\mathbb{N}^{K_{n}}$ such that $n=\sum_{1\leq i\leq K_{n}}N_{i,n}$. For $\alpha\in[0,1)$ and $\theta>-\alpha$, the Ewens-Pitman model assigns to $\Pi_{n}$ the probability 
\begin{equation}\label{epsm}
P[K_{n}=k,\mathbf{N}_{n}=(n_{1},\ldots,n_{k})]=\frac{1}{k!}{n\choose n_{1},\ldots,n_{k}}\frac{[\theta]_{(k,\alpha)}}{[\theta]_{(n)}}\prod_{i=1}^{n}[1-\alpha]_{(n_{i}-1)},
\end{equation}
where $[x]_{(n,a)}$ denotes the rising factorial of $x$ of order $n$ and increment $a$, i.e. $[x]_{(n,a)} :=\prod_{0\leq i\leq n-1}(x+ia)$ and $[x]_{(n)} :=[x]_{(n,1)}$. The distribution \eqref{epsm} admits a sequential construction in terms of the Chinese restaurant process \citep{Pit(95),Zab(05)} and a Poisson process construction by random sampling the two-parameter Poisson-Dirichlet distribution \citep{Per(92),Pit(97)}; see also \citet{Dol(20),Dol(21)} for a construction through the negative-Binomial compound Poisson model for random partitions \citep{Cha(07)}. For $\alpha=0$ the Ewens-Pitman model reduces to the Ewens model, arising by random sampling the Poisson-Dirichlet distribution \citep{Kin(75)}. The Ewens-Pitman model plays a critical role in a variety of research areas, e.g., population genetics, Bayesian statistics, combinatorics, machine learning and statistical physics. See \citet[Chapter 3]{Pit(06)} for an overview of the Ewens-Pitman model and generalizations thereof.

There have been several works on the large $n$ asymptotic behaviour of $K_{n}$ under the Ewens-Pitman model, which show different scalings and limits for $K_{n}$ depending on whether $\alpha=0$ or $\alpha\in(0,1)$; see \citet[Chapter 3 and Chapter 4]{Pit(06)} and references therein. Denote the almost sure and weak convergence by $\stackrel{a.s.}{\longrightarrow}$ and $\stackrel{w}{\longrightarrow}$, respectively. For $\alpha=0$ and $\theta>0$, as $n\rightarrow+\infty$ there hold
\begin{equation}\label{llnk_DP}
\frac{K_{n}}{\log n}\stackrel{a.s.}{\longrightarrow}\theta
\end{equation}
and 
\begin{equation}\label{cltk_DP}
\sqrt{\log n}\left(\frac{K_{n}}{\log n}-\theta\right)\stackrel{w}{\longrightarrow}\sqrt{\theta}\,\mathcal{N}(0,1),
\end{equation}
where $\mathcal{N}(0,1)$ denotes the standard Gaussian random variable; see \citet[Theorem 2.3]{Kor(73)}. For $\alpha\in(0,1)$ and $\theta>-\alpha$, as $n\rightarrow+\infty$ there hold 
\begin{equation}\label{as_limit}
\frac{K_{n}}{n^{\alpha}}\stackrel{a.s.}{\longrightarrow}S_{\alpha,\theta}
\end{equation}
and 
\begin{equation}\label{gaussian_limit}
\sqrt{n^{\alpha}}\left(\frac{K_{n}}{n^{\alpha}}-S_{\alpha,\theta}\right)\stackrel{w}{\longrightarrow}\sqrt{\tilde{S}_{\alpha,\theta}}\,\mathcal{N}(0,1),
\end{equation}
where $S_{\alpha,\theta}$ and $\tilde{S}_{\alpha,\theta}$ denote scaled Mittag-Leffler random variables \citep{Zol(86),Bin(89)}, sharing the same distribution, and $\tilde{S}_{\alpha,\theta}$ is independent of $\mathcal{N}(0,1)$; see \citet[Theorem 3.8]{Pit(06)} and \citet[Theorem 2.8]{Ber(24)}.

\begin{remark} 
Beyond the almost-sure and Gaussian fluctuations displayed in \eqref{llnk_DP}-\eqref{gaussian_limit}, $K_{n}$ has been investigated with respect to large and moderate deviations \citep{Fen(98),Fav(14),Fav(18)} and laws of iterated logarithm \citep{Ber(24)}. Non-asymptotic results for $K_{n}$ have been established in terms of Berry-Esseen theorems \citep{Dol(20)} and concentration inequalities \citep{Per(22)}.
\end{remark}

\subsection{Our contributions}

Under the Ewens-Pitman model with $\alpha\in[0,1)$ and $\theta>0$, we study the large $n$ asymptotic behaviour of $K_{n}$ when the parameter $\theta$ is allowed to depend linearly on $n\in\mathbb{N}$. This is a non-standard asymptotic regime first considered for $\alpha=0$ in Feng \citet{Fen(07)}. More precisely, for $\alpha\in[0,1)$ and $\theta=\lambda n$, with $\lambda>0$, we establish a law of large numbers (LLN) and a central limit theorem (CLT) for $K_{n}$. Denoting by $\stackrel{p}{\longrightarrow}$ the convergence in probability, the next theorem states the main results of the paper.

\begin{thm}\label{thm_main}
For $n\in\mathbb{N}$, let $K_{n}$ be the number of partition blocks under the Ewens-Pitman model with parameter $\alpha\in[0,1)$ and $\theta=\lambda n$, with $\lambda>0$. If 
\begin{displaymath}
\mathfrak{m}_{\alpha,\lambda}:=\begin{cases} \frac{\lambda}{\alpha}\left[\left(1+\frac{1}{\lambda}\right)^{\alpha}-1\right] & \text{ for } \alpha \in (0, 1)\\[0.4cm]
\lambda \log \left(1+\frac{1}{\lambda}\right) &  \text{ for } \alpha = 0
\end{cases} 
\end{displaymath}
and
\begin{displaymath}
\mathfrak{s}_{\alpha,\lambda}^{2}:= \begin{cases}  \frac{\lambda}{\alpha}\left[\left(1+\frac{1}{\lambda}\right)^{2\alpha}\left(1-\frac{\alpha}{1+\lambda}\right)-\left(1+\frac{1}{\lambda}\right)^{\alpha}\right]& \text{ for } \alpha \in (0, 1)\\[0.4cm]
\lambda \log \left(1 + \frac{1}{\lambda}\right)  - \frac{\lambda}{\lambda +1}&  \text{ for } \alpha = 0,
\end{cases}
\end{displaymath}
then, as $n\rightarrow+\infty$ there hold:
\begin{itemize}
\item[i)]
\begin{equation}\label{mom_m}
\mathbb{E}\left[K_n\right] =n\mathfrak{m}_{\alpha, \lambda} + O(1)
\end{equation}
and
\begin{equation}\label{mom_v}
\operatorname{Var}(K_n) =n \mathfrak{s}_{\alpha, \lambda}^2 + O(1);
\end{equation}
\item[ii)]
\begin{equation}\label{lln}
\frac{K_{n}}{n}\stackrel{p}{\longrightarrow}\mathfrak{m}_{\alpha, \lambda};
\end{equation}
\item[iii)]
\begin{equation}\label{clt}
\frac{K_{n}-n\mathfrak{m}_{\alpha, \lambda}}{\sqrt{n\mathfrak{s}_{\alpha, \lambda}^{2}}}\stackrel{\text{w}}{\longrightarrow}\mathcal{N}(0,1).
\end{equation}
\end{itemize}
Further, for any $\lambda>0$ there hold that $\mathfrak{m}_{0, \lambda} = \lim_{\alpha \to 0^+} \mathfrak{m}_{\alpha, \lambda}$ and $\mathfrak{s}^2_{0, \lambda} = \lim_{\alpha \to 0^+} \mathfrak{s}^2_{\alpha, \lambda}$.
\end{thm}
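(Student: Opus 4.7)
The theorem bundles two qualitatively different regimes, so the plan is to prove the $\alpha=0$ and $\alpha\in(0,1)$ statements by separate techniques and then verify the continuity at $\alpha=0$ by a direct Taylor expansion.

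\textbf{Case $\alpha=0$.} By the classical independence representation of the Ewens partition, $K_n=\sum_{i=1}^{n}B_i$ with $B_i$ independent Bernoullis of parameter $p_{i,n}:=\lambda n/(\lambda n+i-1)$. The moment identities \eqref{mom_m}-\eqref{mom_v} would follow from comparing the exact sums $\sum_i p_{i,n}$ and $\sum_i p_{i,n}(1-p_{i,n})$ with their Riemann integrals on $[0,n]$: both integrands are monotone in $i$ with total variation of order $1$, so an Euler--Maclaurin step gives an $O(1)$ remainder, and the two integrals evaluate explicitly to $n\lambda\log(1+1/\lambda)$ and $n[\lambda\log(1+1/\lambda)-\lambda/(\lambda+1)]$. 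The LLN \eqref{lln} is then Chebyshev, and the CLT \eqref{clt} is the Lindeberg--Lyapunov theorem for triangular arrays of independent indicators, since the Lyapunov ratio is controlled by $\operatorname{Var}(K_n)^{-1/2}\to 0$; in fact this upgrades to the Berry--Esseen bound anticipated in the introduction.

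\textbf{Case $\alpha\in(0,1)$.} Here I would use the compound Poisson construction of $K_n$ recalled from \citet{Cha(07),Dol(20),Dol(21)}: there is a bivariate pair $(\tilde K_\mu,\tilde N_\mu)$ with $\tilde K_\mu\sim\mathrm{Poisson}(\mu)$ and $\tilde N_\mu=\sum_{j=1}^{\tilde K_\mu}Y_j$ a sum of i.i.d.\ negative-Binomial jumps (with parameters depending on $\alpha$ and an auxiliary $p\in(0,1)$) such that the conditional law $\mathcal{L}(\tilde K_\mu\mid\tilde N_\mu=n)$ coincides with the law of $K_n$ once $(\mu,p)$ is tuned to $(n,\alpha,\lambda)$; with $\theta=\lambda n$ this matching forces $\mu=\mu_n\sim n\mathfrak{m}_{\alpha,\lambda}$ and $p=p_n\to 1/(1+\lambda)$ up to $O(1/n)$. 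The argument then splits into two steps:
\begin{itemize}
\item[(a)] An \emph{unconditional} bivariate CLT and Berry--Esseen bound for $(\tilde K_\mu-\mathbb{E}\tilde K_\mu,\tilde N_\mu-\mathbb{E}\tilde N_\mu)/\sqrt{\mu}$, derived from the Poisson-compound structure together with an explicit computation of the mean vector and the covariance matrix via the negative-Binomial probability generating function.
\item[(b)] A \emph{local} CLT for the lattice sum $\tilde N_\mu$ giving the Gaussian asymptotics of $\mathbb{P}(\tilde N_\mu=n)$; dividing the joint mass $\mathbb{P}(\tilde K_\mu=k,\tilde N_\mu=n)$ by it identifies the conditional law as Gaussian on the $\sqrt{\mu}$ scale, with mean $\mathbb{E}[\tilde K_\mu]+(\operatorname{Cov}/\operatorname{Var})(n-\mathbb{E}[\tilde N_\mu])$ and Schur-complement variance $\operatorname{Var}(\tilde K_\mu)-\operatorname{Cov}(\tilde K_\mu,\tilde N_\mu)^2/\operatorname{Var}(\tilde N_\mu)$.
\end{itemize}
Simplifying these expressions with the closed-form generating function of the negative-Binomial identifies the conditional centre and scale precisely with $n\mathfrak{m}_{\alpha,\lambda}$ and $n\mathfrak{s}^2_{\alpha,\lambda}$, yielding (i), (ii) and (iii) together.

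\textbf{Main obstacle and continuity.} The delicate step is (b): producing the conditional Gaussian limit requires controlling $\mathbb{P}(\tilde N_\mu=n)$ to leading order uniformly for $k$ in a $\sqrt{n}$-window around the typical value, which calls for an Edgeworth-type local CLT rather than a plain integral CLT; this is also where the quantitative Berry--Esseen refinement is most sensitive. Finally, the continuity identities $\mathfrak{m}_{0,\lambda}=\lim_{\alpha\to 0^+}\mathfrak{m}_{\alpha,\lambda}$ and $\mathfrak{s}^2_{0,\lambda}=\lim_{\alpha\to 0^+}\mathfrak{s}^2_{\alpha,\lambda}$ are immediate from the expansion $(1+1/\lambda)^{\alpha}=1+\alpha\log(1+1/\lambda)+\tfrac12\alpha^{2}\log^{2}(1+1/\lambda)+o(\alpha^{2})$.
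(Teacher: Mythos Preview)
Your treatment of the case $\alpha=0$ is correct and, for the bare CLT, more direct than the paper's. The paper also starts from the Bernoulli representation $K_n=\sum_{i=1}^n B_i$, but then computes the moments through the probability generating function $G_{K_n}(s)=[s\theta]_{(n)}/[\theta]_{(n)}$ and Digamma/Trigamma asymptotics, and proves the CLT by a bespoke characteristic-function expansion (Lemma~\ref{lem: BE lem_dir}) designed to output a Berry--Esseen rate. Your Euler--Maclaurin plus Lyapunov route is perfectly adequate for (i)--(iii); incidentally, the classical Berry--Esseen theorem for non-i.i.d.\ summands applied to the Bernoullis already gives Kolmogorov distance $O(n^{-1/2})$, which is sharper than the paper's $O(n^{-1/8}\log n)$.

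For $\alpha\in(0,1)$ there is a genuine gap. Your plan asserts that, for a suitable \emph{deterministic} choice of $(\mu,p)$, the conditional law $\mathcal{L}(\tilde K_\mu\mid \tilde N_\mu=n)$ in the negative-Binomial compound Poisson model equals the law of $K_n$ under Ewens--Pitman with $\theta=\lambda n$. This is false. Conditioning the compound Poisson on $\tilde N_\mu=n$ produces exactly the distribution $R(\alpha,n,z)$ of \eqref{dist_cha}, i.e.\ $P[K=k]\propto \mathscr{C}(n,k;\alpha)\,z^{k}$ for some $z$ determined by $(\mu,p)$. The Ewens--Pitman block count, by contrast, has $P[K_n=k]\propto \mathscr{C}(n,k;\alpha)\,[\theta]_{(k,\alpha)}=\mathscr{C}(n,k;\alpha)\,\alpha^{k}\Gamma(\theta/\alpha+k)/\Gamma(\theta/\alpha)$, and no choice of $z$ turns $z^{k}$ into $\Gamma(\theta/\alpha+k)$. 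What the paper uses (equation~\eqref{eq:Kn_repres}) is the representation $K_n\stackrel{d}{=}R(\alpha,n,Z_n)$ with a \emph{random} $Z_n=S_{\alpha,\lambda n}\,G_{(\lambda+1)n,1}^{\alpha}$, and the CLT for $K_n$ is obtained by combining a CLT for $Z_n/n$ (Proposition~\ref{prop:clt_Z}) with a Berry--Esseen bound for $R_n(z)$ uniform in $z$ on compacta (Proposition~\ref{prop:clt_R}), glued together via Propositions~\ref{lem:lem3_ter}--\ref{lem:lem4_ter}. The fluctuations of $Z_n$ contribute a nontrivial piece $\Sigma^{2}(\mu'(z_0))^{2}$ to the limiting variance (Proposition~\ref{obs:limits_are_coherent}), which your purely conditional computation would miss. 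So steps (a)--(b) of your plan prove a CLT for $R(\alpha,n,z)$ at a fixed $z$, not for the Ewens--Pitman $K_n$; the randomization over $Z_n$ is not a technicality but the mechanism that recovers the correct $\mathfrak{s}^{2}_{\alpha,\lambda}$.
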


Theorem \ref{thm_main} provides counterparts of the almost-sure fluctuations \eqref{llnk_DP} and \eqref{as_limit}, as well as of the Gaussian fluctuations \eqref{cltk_DP} and \eqref{gaussian_limit}, under the regime $\theta=\lambda n$, with $\lambda>0$. By comparing the LLN \eqref{lln} with the almost-sure fluctuations \eqref{llnk_DP} and \eqref{as_limit}, we observe how the regime $\theta=\lambda n$ affects the large $n$ asymptotic behaviour of $K_{n}$ in terms of both the scalings and the limiting behaviours. While the almost-sure fluctuations scale as $\log n$ and $n^{\alpha}$, depending on whether $\alpha=0$ or $\alpha\in(0,1)$, the LLN \eqref{lln} scales as $n$ for $\alpha\in[0,1)$, this being the ``usual" scaling for a LLN.  Further, while the almost-sure fluctuations have a non-random and a random limiting behaviour depending on whether $\alpha=0$ or $\alpha\in(0,1)$, which determine the non-random and random centering in \eqref{cltk_DP} and \eqref{gaussian_limit}, respectively, the LLN \eqref{lln} has a non-random limiting behaviour for $\alpha\in[0,1)$, determining a non-random centering in the CLT \eqref{clt}.

The LLN \eqref{lln} relies on combining Chebyshev's inequality with the asymptotic expansions \eqref{mom_m} and \eqref{mom_v}. These expansions, in turn, 
are obtained from the distribution of $K_{n}$, which follows by marginalizing \eqref{epsm} with $\theta=\lambda n$, for $\lambda>0$. A stronger version of the LLN \eqref{lln} will be also established, 
showing that as $n\rightarrow+\infty$ \begin{equation}\label{lln_strong}
\frac{K_{n}}{n}\stackrel{a.s.}{\longrightarrow}\mathfrak{m}_{\alpha, \lambda}.
\end{equation}
While the LLN \eqref{lln} relies on the sole (marginal) distribution of $K_{n}$, the strong LLN \eqref{lln_strong} requires to consider the finite-dimensional laws of the sequence $\{K_n\}_{n \geq 1}$. That is, for each $n\geq1$, it requires to look at $K_{n}$ as the number of blocks of the random partition of $\{1,\ldots,n\}$ induced by random sampling the two-parameter Poisson-Dirichlet distribution with $\alpha\in[0,1)$ and $\theta=\lambda n$, for $\lambda>0$.

Depending on whether $\alpha=0$ or $\alpha\in(0,1)$, the CLT \eqref{clt} relies on different arguments. For $\alpha=0$, the CLT relies on the representation of $K_{n}$ as a sum of independent, but not identically distributed, Bernoulli random variables, leading to a refinement of the CLT in terms of a Berry-Esseen theorem with respect to the Kolmogorov metric $\left\|\cdot \right\|_\infty$. In particular, if $F_n$ and $\Phi$ denote the cumulative distribution functions of $n^{-1/2}\mathfrak{s}_{0, \lambda}^{-1}(K_n - n \mathfrak{m}_{0, \lambda})$ and $\mathcal{N}(0,1)$, respectively, then we will show that there exist a constant $C>0$ and $\bar{n} \in \mathbb{N}$ such that
\begin{equation}\label{eq:be_dir}
\left\|F_{n}- \Phi \right\|_\infty \le \frac{C \,\log(n)}{n^{1/8}}
\end{equation}
holds for every $n \ge \bar{n}$; the constant $C$ is obtained constructively, and it can be made explicit by gathering equations in the proof of the Berry-Esseen inequality \eqref{eq:be_dir}. For $\alpha\in(0,1)$ the CLT relies on the compound Poisson construction of $K_{n}$, leading to LLNs, CLTs and Berry-Esseen theorems for the number of blocks of the negative-Binomial compound Poisson random partition, which are of independent interest \citep{Cha(07),Dol(21)}. 

\subsection{Related work}

There exists a rich literature on the large $\theta$ asymptotic behaviour of the two-parameter Poisson-Dirichlet distribution, assuming $\alpha\in[0,1)$ and $\theta>0$. In population genetics, the Poisson-Dirichlet distribution (i.e. $\alpha=0$) describes the distribution of gene frequencies in a large neutral population at a locus, with the parameter $\theta>0$ taking on the interpretation of  the scaled population mutation rate \citep[Chapter 2]{Feng(10)}. The genetic interpretation of $\theta$ has motivated several works on the large $\theta$ asymptotic behaviour of the Poisson-Dirichlet distribution, and statistics thereof, providing Gaussian fluctuations and large (and moderate) deviations \citep{Wat(77),Gri(79),Joy(02),Daw(06),Fen(07),Fen(08)}. In particular, \citet{Fen(07)} first considered the regime $\theta=\lambda n$, with $\lambda>0$, in the study of the large $n$ asymptotic behaviour of the number $K_{n}$ of blocks in the Ewens random partition, providing a large deviation principle for $K_{n}$.  Some of the large $\theta$ asymptotic results developed for the Poisson-Dirichlet distribution have been extended to two-parameter Poisson-Dirichlet distribution (i.e. $\alpha\in(0,1)$), in terms of both Gaussian fluctuations and large deviations \citep{Fen(07a),Fen(10)}. Instead, nothing is known on the large $\theta$ asymptotic behaviour of the number $K_{n}$ of blocks in the Ewens-Pitman random partition, not even an extension of the large deviation principle of \citet[Theorem 4.1. and Theorem 4.4.]{Fen(07)}. The present paper covers this gap in the literature, providing  a LLN and a CLT for $K_{n}$ in the regime $\theta=\lambda n$, with $\lambda>0$.

\subsection{Organization of the paper}

The paper is structured as follows. Section \ref{sec2} contains the proof of Theorem \ref{thm_main} for $\alpha\in(0,1)$, with technical lemmas and propositions in Appendix \ref{app2}. Section \ref{sec3} contains the proof of Theorem \ref{thm_main} for $\alpha=0$ and the proof of the Berry-Esseen theorem \eqref{eq:be_dir}, with technical lemmas and propositions in Appendix \ref{app3}. Section \ref{sec4} contains the proof of the strong LLN  \eqref{lln_strong}. In Section \ref{sec5} we discuss some directions of future research, including the use of Theorem \ref{thm_main} in Bayesian nonparametric estimation of the unseen. 


\section{Proof of Theorem \ref{thm_main} for $\alpha\in(0,1)$}\label{sec2}

The proof of Theorem \ref{thm_main} is structured as follows: i) in Section \ref{sec21} we prove the asymptotic expansions \eqref{mom_m} and \eqref{mom_v}; ii) in Section \ref{sec22} we prove the LLN \eqref{lln} and \eqref{lln_strong}; iii) in Section \ref{sec23} we prove the CLT \eqref{clt}. Technical lemmas and propositions are deferred to Appendix \ref{app2}. 

\subsection{Asymptotic expansions \eqref{mom_m} and \eqref{mom_v}}\label{sec21}
To prove \eqref{mom_m}, we combine $\mathbb{E}[K_{n}]$ in the regime $\theta=\lambda n$ (see Equation \eqref{eq: mean_kn_gen_final}) with the asymptotic expansion for the Gamma function \citep[Equation 1]{TE(51)}. 
Precisely, this yields
\begin{align*}
\mathbb{E}\left[K_n\right]&= n \, \left\{\frac{\lambda}{\alpha} \left[-1 + \left(\frac{\lambda+1}{\lambda}\right)^\alpha + O \left(\frac{1}{n}\right) \right] \right\} = n\mathfrak{m}_{\alpha, \lambda}+ O(1),
\end{align*}
with
\begin{displaymath}
\mathfrak{m}_{\alpha, \lambda}=\frac{\lambda}{\alpha}\left[\left(1+\frac{1}{\lambda}\right)^{\alpha}-1\right].
\end{displaymath}
See Appendix \ref{app21} for details. Similarly, to prove \eqref{mom_v}, we combine $\operatorname{Var}(K_{n})$ in the regime $\theta=\lambda n$ (see Equation \eqref{eq: var_kn_gen_final}) with the asymptotic expansion for the Gamma function \citep[Equation 1]{TE(51)}. Precisely, this yields
\begin{align*}
&\operatorname{Var}(K_{n})\\
&\quad=\left[\left(\frac{\lambda n}{\alpha}\right)^2 + \frac{\lambda n}{\alpha}\right]\left[1 - 2 \left(\frac{\lambda+1}{\lambda} \right)^\alpha +\left(\frac{\lambda+1}{\lambda}\right)^\alpha  \frac{\alpha(\alpha-1)}{\lambda(\lambda+1) n } + \left(\frac{\lambda+1}{\lambda}\right)^{2\alpha} \right. \\
 &\quad\quad\quad \left.-\left(\frac{\lambda+1}{\lambda}\right)^{2\alpha}  \frac{\alpha(2\alpha-1)}{\lambda(\lambda+1) n} + O\left(\frac{1}{n^2}\right)\right]\\
&\quad\quad + \left(\frac{\lambda n}{\alpha}\right)  \left[-1 + \left(\frac{\lambda+1}{\lambda} \right)^\alpha+ O\left(\frac{1}{n}\right)\right] \\
&\quad\quad-\left(\frac{\lambda n}{\alpha}\right)^2  \left[ 1+ \left(\frac{\lambda+1}{\lambda}\right)^{2\alpha} - \left(\frac{\lambda+1}{\lambda}\right)^{2\alpha}  \frac{\alpha(\alpha -1)}{\lambda(\lambda+1) n} -2 \left(\frac{\lambda+1}{\lambda} \right)^\alpha \right. \\
&\quad\quad\quad \left.+ \left(\frac{\lambda+1}{\lambda} \right)^\alpha \frac{\alpha(\alpha-1)}{\lambda(\lambda+1) n} + O\left(\frac{1}{n^2}\right) \right]\\
&\quad= \left(\frac{\lambda n}{\alpha}\right) \cdot   \left(\frac{\lambda+1}{\lambda}\right)^\alpha\left[-1 + \left(\frac{\lambda+1}{\lambda}\right)^\alpha\cdot \left(1- \frac{\alpha}{\lambda+1}\right) \right] +O(1) \\
&\quad=n \mathfrak{s}_{\alpha, \lambda}^2 + O(1),
\end{align*}
with
\begin{displaymath}
\mathfrak{s}_{\alpha, \lambda}^2=\frac{\lambda}{\alpha}\left[\left(1+\frac{1}{\lambda}\right)^{2\alpha}\left(1-\frac{\alpha}{1+\lambda}\right)-\left(1+\frac{1}{\lambda}\right)^{\alpha}\right].
\end{displaymath}
See Appendix \ref{app21} for details. This completes the proof of \eqref{mom_m} and \eqref{mom_v}, respectively.

\subsection{LLN \eqref{lln}}\label{sec22}
To prove the LLN \eqref{lln}, it is useful to consider the following identity:
\begin{equation} \label{split_slutsky}
\frac{K_n - n \mathfrak{m}_{\alpha, \lambda}}{n} = \frac{K_n - \mathbb{E} \left[ K_n\right] }{n} + \frac{\mathbb{E} \left[ K_n\right] - n \mathfrak{m}_{\alpha, \lambda}}{n} .
\end{equation}
The second term in the right-end side is deterministic, and converges to 0 in view of \eqref{mom_m}. Then, fix $\varepsilon >0$ and combine Chebyshev's inequality 
with \eqref{mom_v} to obtain
\begin{align*}
P\left[\left| \frac{K_n - \mathbb{E}[K_n]}{n} \right|> \varepsilon \right]& = P \left[\left| K_n - \mathbb{E}[K_n] \right| > n\varepsilon \right]\le \frac{\operatorname{Var}(K_n)}{n^2 \varepsilon^2} = O\left(\frac{1}{n}\right)
\end{align*}
as $n \rightarrow +\infty$. The proof of the LLN \eqref{lln} is completed in view of \eqref{split_slutsky}.

\subsection{CLT \eqref{clt}}\label{sec23}

To prove the CLT \eqref{clt}, we recall the compound Poisson construction of $K_{n}$ \citep[Lemma 1 and Proposition 1]{Dol(20)}. For $\alpha\in(0,1)$ and $\theta>0$ let $S_{\alpha,\theta}$ be the scaled Mittag-Leffler random variable in \eqref{as_limit}. More precisely, $S_{\alpha,\theta}$ is a positive random variable with density function 
\begin{displaymath}
f_{S_{\alpha,\theta}}(s)=\frac{\Gamma(\theta)}{\Gamma\left(\frac{\theta}{\alpha}\right)}s^{\frac{\theta-1}{\alpha}-1}f_{\alpha}(s^{-1/\alpha})\qquad s>0,
\end{displaymath}
where $f_{\alpha}$ is the positive $\alpha$-stable density function \citep{Poll(46)}. Further information about $f_{\alpha}$ can be found in \citet[Sections 2.2 and 2.4]{Zol(86)}, while
\citet[Chapter 0]{Pit(06)} contains some details on $S_{\alpha,\theta}$. In particular, there hold
\begin{equation}
    \mathbb{E}\left[S_{\alpha, \theta}\right] =\frac{\theta}{\alpha}\frac{\Gamma (\theta )}{\Gamma(\alpha + \theta)} 
    \label{eq: mean stable}
\end{equation}
and
\begin{equation}
    \operatorname{Var}\left[S_{\alpha, \theta}\right] =  \frac{\theta}{\alpha} \left(\frac{\theta}{\alpha} +1\right) \frac{\Gamma(\theta)}{\Gamma(\theta+2\alpha)} - \left(\frac{\theta}{\alpha}\right)^2  \left(\frac{\Gamma(\theta)}{\Gamma(\theta+\alpha)}\right)^2.
    \label{eq: var stable}
\end{equation}

For $\alpha\in(0,1)$, $z>0$ and  $n\in\mathbb{N}$, we introduce a random variable $R(\alpha,n,z)$ with values in $\{1,\ldots,n\}$, whose distribution has probability mass function given by
\begin{equation}\label{dist_cha}
P[R(\alpha,n,z) = k] = \frac{\mathscr{C} (n, k; \alpha)z^k}{\sum_{j = 1}^n \mathscr{C} (n, j; \alpha)z^j} \qquad k\in\{1,\ldots,n\},
\end{equation}
where 
\begin{displaymath}
\mathscr{C}(n, k; \alpha)=\frac{1}{k!}\sum_{i=0}^{k}(-1)^{i}{k\choose i}[-i\alpha]_{(n)} \geq 0
\end{displaymath}
is the generalized factorial coefficient, with the proviso that $\mathscr{C}(0,0;\alpha):=1$ and $\mathscr{C}(n,0;\alpha):=0$ for $n\geq1$. See \citet[Chapter 2]{Cha(05)} for detailed account on $\mathscr{C}$. Specifically, \eqref{dist_cha} provides the distribution of the number of blocks in a random partition of the set $\{1,\ldots,n\}$ under the negative-binomial compound Poisson model \citep[Example 3.2]{Cha(07)}.  

Let $G_{a,b}$ be a Gamma random variable with shape $a>0$ and scale $b>0$, namely $G_{a,b}$ has density function $f_{a,b}(x) = [b^a/\Gamma(a)] x^{a-1} e^{-b x}$, for $x>0$.
For $\alpha\in(0,1)$ and $\theta>0$, \citet[Proposition 1]{Dol(20)} shows that for any $n\in\mathbb{N}$
\begin{equation}\label{DF_prop}
K_n \stackrel{d}{=} R\left(\alpha, n, Z_{\theta, n}\right),
\end{equation}
and
\begin{displaymath}
Z_{\theta, n} := S_{\alpha, \theta} G_{ \theta + n, 1}^\alpha
\end{displaymath}
such that $S_{\alpha, \theta}$ and $G_{\theta + n, 1}$ are independent random variables, and independent of $R(\alpha, n, z)$, for any $z>0$. See also \citet{Dol(21)} for details on \eqref{DF_prop}. Here and throughout,
$\stackrel{d}{=}$ denotes identity in distribution.  

The proof of the CLT \eqref{clt} relies on the distributional identity \eqref{DF_prop}. In particular, hereafter we consider \eqref{DF_prop} in the regime $\theta = \lambda n$. That is, for any $n\in\mathbb{N}$
\begin{equation}\label{eq:Kn_repres}
K_n \stackrel{d}{=} R\left(\alpha, n, Z_n\right),
\end{equation}
where
\begin{displaymath}
Z_n:= Z_{\lambda n, n} = S_{\alpha, \lambda n}G_{(\lambda +1)n, 1}^\alpha
\end{displaymath}
such that $S_{\alpha, \lambda n}$ and $G_{(\lambda +1)n, 1}$ are independent random variables, and independent of $R(\alpha, n, z)$, for any $z>0$. To simplify the notation, for any $n\in\mathbb{N}$ and $z>0$ we set
\begin{displaymath}
R_n(z) :=R(\alpha, n, nz).
\end{displaymath}
Hereafter, we investigate the large $n$ asymptotic behaviours of $Z_{n}$ and $R_n(z)$, as well as their interplay with respect to the asymptotic expansions displayed in \eqref{mom_m} and \eqref{mom_v}. In particular, the next proposition provides a LLN and CLT for $Z_n$.

\begin{prp}[LLN and CLT for $Z_n$]\label{prop:clt_Z} 
For any $\alpha\in(0,1)$ and $\lambda>0$, set
\begin{displaymath}
z_0 := \frac{\lambda}{\alpha} \left(\frac{\lambda+1}{\lambda}\right)^\alpha
\end{displaymath}
and
\begin{displaymath}
\Sigma^2 := \frac{\lambda}{\alpha} \left(\frac{\lambda+1}{\lambda}\right)^{2\alpha} \left(1- \frac{\alpha}{\lambda+1} \right).
\end{displaymath}
As $n \to +\infty$, there hold:
\begin{itemize}
\item[i)]
    \begin{equation}\label{mom_Zn1}
        \mathbb{E}[Z_n]= n z_0 + O(1) 
        \end{equation}
        and
            \begin{equation}\label{mom_Zn2}
\operatorname{Var}(Z_n)= n \Sigma^2 + O(1);
    \end{equation}
\item[ii)]
\begin{equation}
\label{eq:lln_zn}
\frac{Z_n}{n} \stackrel{p}{\longrightarrow} z_0;
\end{equation}
\item[iii)]
\begin{equation}
\label{eq:clt_zn}
\frac{Z_n - n z_0}{\sqrt{n}} \stackrel{w}{\longrightarrow} \mathcal{N}\left(0,\Sigma^2\right).
\end{equation}
\end{itemize}
\end{prp}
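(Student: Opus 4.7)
I would exploit the factorization $Z_n = S_{\alpha,\lambda n}\cdot G^\alpha$, with $G:=G_{(\lambda+1)n,1}$, and the independence of the two factors. For item~(i), use $\mathbb{E}[Z_n^r]=\mathbb{E}[S_{\alpha,\lambda n}^r]\,\mathbb{E}[G^{\alpha r}]$, feed in \eqref{eq: mean stable}--\eqref{eq: var stable} and the Gamma moments $\mathbb{E}[G^{\alpha r}]=\Gamma((\lambda+1)n+\alpha r)/\Gamma((\lambda+1)n)$, and apply the Tricomi-Erd\'elyi expansion $\Gamma(x+a)/\Gamma(x+b)=x^{a-b}\bigl[1+(a-b)(a+b-1)/(2x)+O(x^{-2})\bigr]$ to every Gamma ratio. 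For $r=1$ the leading factors collapse to $n z_0+O(1)$; for $r=2$ the dominant $n^2$-contributions in $\mathbb{E}[Z_n^2]-(\mathbb{E}[Z_n])^2$ cancel and leave $n\Sigma^2+O(1)$, essentially a slimmer version of the algebra already performed for $K_n$ in Section~\ref{sec21}. Item~(ii) then follows from Chebyshev's inequality applied to the splitting $(Z_n-n z_0)/n=(Z_n-\mathbb{E}[Z_n])/n+(\mathbb{E}[Z_n]-n z_0)/n$, exactly as in Section~\ref{sec22}.

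\textbf{CLT via a product decomposition.} For item~(iii), set $\mu_S:=\mathbb{E}[S_{\alpha,\lambda n}]$ and $\mu_G:=\mathbb{E}[G^\alpha]$ and write
\begin{equation*}
Z_n - n z_0 \;=\; \mu_G(S_{\alpha,\lambda n}-\mu_S) \,+\, \mu_S(G^\alpha - \mu_G) \,+\, (S_{\alpha,\lambda n}-\mu_S)(G^\alpha - \mu_G) \,+\, (\mu_S\mu_G - n z_0).
\end{equation*}
Item~(i) gives $(\mu_S\mu_G - nz_0)/\sqrt n = O(n^{-1/2})$. The second-moment expansions, together with independence, yield $\sigma_S^2:=\operatorname{Var}(S_{\alpha,\lambda n})=O(n^{1-2\alpha})$ and $\sigma_{G^\alpha}^2:=\operatorname{Var}(G^\alpha)=O(n^{2\alpha-1})$, so the cross-product divided by $\sqrt n$ has variance $\sigma_S^2\sigma_{G^\alpha}^2/n=O(1/n)$ and vanishes in $L^2$. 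By Slutsky, it suffices to prove joint weak convergence of the two main terms; independence of $S_{\alpha,\lambda n}$ and $G$ reduces this to establishing the two marginal CLTs and then computing the deterministic prefactors $\mu_G\sigma_S/\sqrt n$ and $\mu_S\sigma_{G^\alpha}/\sqrt n$.

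\textbf{Marginal CLTs and the main obstacle.} The CLT for $G^\alpha$ is routine: the classical Gamma CLT $(G_{(\lambda+1)n,1}-(\lambda+1)n)/\sqrt{(\lambda+1)n}\stackrel{w}{\longrightarrow}\mathcal{N}(0,1)$ (straightforward for real shape via the characteristic function) combined with the delta method for $x\mapsto x^\alpha$ at $x=(\lambda+1)n$ gives $(G^\alpha-\mu_G)/\sigma_{G^\alpha}\stackrel{w}{\longrightarrow}\mathcal{N}(0,1)$. The real work lies in the CLT for the scaled Mittag-Leffler $S_{\alpha,\lambda n}$: this is the most delicate step, since $S_{\alpha,\theta}$ is not a sum of independent variables. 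My approach is the method of moments, relying on the explicit formula
\begin{equation*}
\mathbb{E}[S_{\alpha,\theta}^r]=\frac{\Gamma(\theta+1)\,\Gamma(\theta/\alpha+r+1)}{\Gamma(\theta/\alpha+1)\,\Gamma(\theta+r\alpha+1)},
\end{equation*}
derived from the density of $S_{\alpha,\theta}$ through the substitution $u=s^{-1/\alpha}$ and the negative moments $\int_0^\infty u^{-q}f_\alpha(u)\,\mathrm{d}u=\Gamma(1+q/\alpha)/\Gamma(1+q)$ of the positive stable law. Plugging these into the Tricomi-Erd\'elyi expansion to sufficient order and extracting cumulants, one checks that $\kappa_k(S_{\alpha,\lambda n})=o(\sigma_S^k)$ for every $k\ge 3$, so $(S_{\alpha,\lambda n}-\mu_S)/\sigma_S\stackrel{w}{\longrightarrow}\mathcal{N}(0,1)$ by the classical cumulant criterion. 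Combining the two Gaussians via independence and verifying the algebraic identity $(\mu_G\sigma_S/\sqrt n)^2+(\mu_S\sigma_{G^\alpha}/\sqrt n)^2\to\Sigma^2$ then yields \eqref{eq:clt_zn}.
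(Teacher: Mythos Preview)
Your treatment of items (i) and (ii) coincides with the paper's: both use the independence factorisation, the closed-form Gamma and Mittag--Leffler moments, and the Tricomi--Erd\'elyi expansion, followed by Chebyshev for the LLN.

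For item (iii) the routes diverge genuinely. The paper never establishes a marginal CLT for $S_{\alpha,\lambda n}$. Instead it proceeds in three steps: (a) it proves a \emph{total-variation} approximation $d_{\mathrm{TV}}\bigl(\mu_{S_{\alpha,\lambda n}},\,\mu_{G_{\rho n+1/2,B}^{\,1-\alpha}}\bigr)=O(1/n)$, relying on Zolotarev's pointwise asymptotics for the positive stable density; (b) it proves a CLT for the auxiliary product $\hat Z_n=G_{\rho n+1/2,B}^{\,1-\alpha}\,G_{(\lambda+1)n,1}^{\alpha}$ by invoking Strassen's strong invariance principle (almost-sure coupling of partial sums to Brownian motion) for both Gamma factors simultaneously; (c) it transfers the CLT from $\hat Z_n$ back to $Z_n$ via a Strassen--Prokhorov coupling that turns the TV bound into a Ky--Fan bound of order $O(1/n)$. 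Your approach is more elementary---no Zolotarev asymptotics, no strong approximation, no coupling theorems---replacing all of this by a direct product decomposition plus two marginal CLTs. The delta-method CLT for $G^\alpha$ is indeed routine, and the variance algebra $(\mu_G\sigma_S/\sqrt n)^2+(\mu_S\sigma_{G^\alpha}/\sqrt n)^2\to\Sigma^2$ checks out. The one place where real work is hidden is the cumulant verification for $S_{\alpha,\lambda n}$: you assert that $\kappa_k(S_{\alpha,\lambda n})=o(\sigma_S^k)$ for all $k\ge 3$, but carrying this out from the moment formula requires expanding each Gamma ratio to the appropriate sub-leading order and tracking the cancellations in the moment-to-cumulant conversion, for \emph{every} $k$. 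This is a legitimate strategy and should succeed (the moments are smooth enough in $r$ that one can essentially read off the cumulant generating function asymptotics), but it is the step that would need to be written out in full, whereas the paper's TV approximation handles the Mittag--Leffler asymptotic normality in one stroke, at the price of heavier external machinery.
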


See Appendix \ref{sec: Zn} for the proof of Proposition \ref{prop:clt_Z}. Then, we state an analogous result for the variable $R_n(z)$, enriched with a Berry-Esseen estimate for the CLT.

\begin{prp}[LLN and Berry-Esseen theorem for $R_n(z)$]\label{prop:clt_R}
For any $\alpha\in(0,1)$ and $z>0$, let $\mu:  (0, +\infty) \to \mathbb{R}$ and $\sigma :  (0, +\infty) \to \mathbb{R}$ be functions defined as
\begin{displaymath}
\mu(z): = z\left(1-\frac{1}{\tau(z)}\right)
\end{displaymath}
and
 \begin{displaymath}
\sigma^2(z): = z \left( 1-\frac{1}{\tau(z)} - \frac{\alpha}{\alpha z + (1-\alpha) \tau(z)}\right) > 0
\end{displaymath}
where, for any $z>0$, $\tau(z)$ denotes the unique real, positive solution to the equation
\begin{equation} 
\label{eq:tauParis}
\tau(z)^\frac{1}{\alpha} = \frac{\tau(z)}{\alpha z}+1.
\end{equation}
As $n \to +\infty$, there hold:
\begin{itemize}
\item[i)]
    \begin{equation}\label{mom_Rnz1}
        \mathbb{E}[R_n(z)]= n\mu(z) + O(1) 
        \end{equation}
        and
            \begin{equation}\label{mom_Rnz2}
\operatorname{Var}(R_n(z))= n \sigma^2(z) + O(1);
    \end{equation}
\item[ii)]
\begin{equation}
\label{eq:lln_Rn}
\frac{R_n(z)}{n} \stackrel{p}{\longrightarrow} \mu(z).
\end{equation}
\end{itemize}
Finally, set
\begin{displaymath}
W_n(z): = \frac{R_n(z) - n \mu(z)}{\sqrt{n \sigma^2(z)}}
\end{displaymath}
and denote by $F_{W_n(z)}$ its distribution function.
Then, there exists a continuous function $C : (0, +\infty) \rightarrow (0, +\infty)$ such that, for any choice of $\zeta_0 $ and $\zeta_1$ that satisfy $0< \zeta_0 < z_0 <\zeta_1 <+\infty$, 
with the same $z_0$ as in Proposition \ref{prop:clt_Z}, the inequality
\begin{equation}
\label{eq:be_rn}
\left\|F_{W_n(z)}- \Phi \right\|_\infty \le \frac{C(z) \, \log(n)}{n^{1/8}}
\end{equation}
holds for every $z \in \left[\zeta_0, \zeta_1\right]$ and every $n \ge \bar{n}(\zeta_0, \zeta_1)$. In particular, the Berry-Esseen bound \eqref{eq:be_rn} implies that $W_n(z)\stackrel{w}{ \longrightarrow }\mathcal{N}(0,1)$ as $n \rightarrow + \infty$, for every fixed $z >0$.
\end{prp}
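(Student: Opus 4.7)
My plan is to view $R_n(z)$ through its compound Poisson realization (cf.~\citet{Cha(07)}): \eqref{dist_cha} is the conditional law of the number of summands in a compound Poisson sum given that the total equals $n$. The essential analytic tool is the bivariate generating function identity $\sum_{n\ge 0}\pi_n(z)\,w^n/n!=\exp\{z\,(1-(1-w)^\alpha)\}$, where $\pi_n(z):=\sum_{k=1}^n\mathscr{C}(n,k;\alpha)\,z^k$ is the partition function; this follows at once from the explicit formula for $\mathscr{C}(n,k;\alpha)$ together with the Newton-type expansion $\sum_n[-i\alpha]_{(n)}\,w^n/n!=(1-w)^{i\alpha}$ (see also \citet[Chapter~2]{Cha(05)}). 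Replacing $z$ with $nz$, the $n$-th coefficient of the right-hand side is accessible by the saddle-point method: the stationarity equation for the phase $\varphi(w):=z(1-(1-w)^\alpha)-\log w$ becomes precisely \eqref{eq:tauParis} under the substitution $w=1-\tau^{-1/\alpha}$, which is how $\tau(z)$ naturally enters the analysis.

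\textbf{Step 1 (moments).} Since $\mathbb{E}[R_n(z)]=nz\,\partial_{nz}\log\pi_n(nz)$ and $\operatorname{Var}(R_n(z))$ is a similar second-order expression, both quantities reduce to ratios of Cauchy contour integrals representing derivatives of $\pi_n(nz)$. I would evaluate each by the standard Laplace expansion to second order around the saddle $w^\ast(z):=1-\tau(z)^{-1/\alpha}$; the leading-order terms reproduce $n\mu(z)$ and $n\sigma^2(z)$, while the next correction is $O(1)$, establishing \eqref{mom_Rnz1}--\eqref{mom_Rnz2}. Positivity of $\sigma^2(z)$ follows from strict convexity of $\varphi$ at the saddle.

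\textbf{Step 2 (LLN).} Mirroring Section~\ref{sec22}: decompose $(R_n(z)-n\mu(z))/n$ into a centred random term controlled by Chebyshev via \eqref{mom_Rnz2} and a deterministic bias term of order $O(1/n)$ via \eqref{mom_Rnz1}, giving \eqref{eq:lln_Rn}.

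\textbf{Step 3 (Berry-Esseen).} Let $\chi_n(t;z):=\mathbb{E}[\exp(itW_n(z))]$. Applying Esseen's smoothing inequality $\|F_{W_n(z)}-\Phi\|_\infty\le \pi^{-1}\int_{-T_n}^{T_n}|\chi_n(t;z)-e^{-t^2/2}|\,|t|^{-1}\,dt+C_0/T_n$ reduces the task to sharp bounds on $\chi_n$. Since $\chi_n(t;z)$ is essentially a ratio of shifted partition functions $\pi_n(nz\,e^{i\xi})/\pi_n(nz)$ times a deterministic phase, a contour saddle-point argument with $t$-dependent deformation gives, in a central window $|t|\lesssim n^{3/8}$, the Taylor estimate $|\chi_n(t;z)-e^{-t^2/2}|\le c(z)|t|^3 n^{-1/2}$; in the intermediate regime a non-degeneracy bound of the form $|\chi_n(t;z)|\le \exp(-c(z)\,t^2/\log n)$ comes from uniform strict convexity of $\varphi$. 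Balancing the two contributions by choosing $T_n\asymp n^{1/8}/\log n$ produces the advertised rate $\log(n)/n^{1/8}$, and uniformity in $z\in[\zeta_0,\zeta_1]$ is inherited from continuity of $\tau,\mu,\sigma$ together with compactness.

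\textbf{Main obstacle.} The delicate point is Step~3, specifically the uniform lower bound on $\operatorname{Re}\varphi(w)-\varphi(w^\ast)$ away from the saddle, uniformly in $z$. The suboptimal rate $\log(n)/n^{1/8}$ reflects a logarithmic loss in this intermediate-regime estimate and the corresponding non-optimal choice of truncation $T_n$; tightening to the classical $n^{-1/2}$ rate would require substantially stronger tail control on $\chi_n$ than what the contour method delivers at this level of generality.
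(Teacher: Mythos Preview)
Your route via the exponential generating function $\sum_{n\ge 0}\pi_n(z)\,w^n/n!=\exp\{z(1-(1-w)^{\alpha})\}$ and a direct saddle-point on the Cauchy integral is a genuine alternative to what the paper does. The paper instead starts from the integral representation
\[
G_{R_n(z)}(s)=e^{nz(s-1)}s^{n/\alpha}\,\frac{I_n(zs)}{I_n(z)},\qquad I_n(y)=\int_0^\infty x^n e^{-x(ny)^{1/\alpha}}f_\alpha(x)\,\mathrm{d}x,
\]
replaces the positive $\alpha$-stable density $f_\alpha$ by Zolotarev's explicit approximant $E_\alpha$, and reduces everything to the asymptotics of a Kr\"atzel integral $F_{p,\nu}$, for which it imports an expansion of Paris; the saddle equation that arises there is exactly your \eqref{eq:tauParis}. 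Your approach is more self-contained and avoids the Zolotarev/Paris machinery; the paper's approach has more overhead but calls only on off-the-shelf expansions and keeps the analysis on the real axis. One technical point you should anticipate: the integrand $\exp\{nz(1-(1-w)^\alpha)\}w^{-n-1}$ has a branch point at $w=1$, and since the saddle $w^\ast=1-\tau(z)^{-1/\alpha}$ can sit arbitrarily close to it as $z\to\infty$, the contour deformation needs care to obtain estimates uniform on $[\zeta_0,\zeta_1]$; the paper's real-integral representation sidesteps this.

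Your Step~3 is where the proposal becomes imprecise. The Taylor bound $|\chi_n-e^{-t^2/2}|\le c(z)|t|^3 n^{-1/2}$ cannot be integrated against $|t|^{-1}$ up to $n^{3/8}$ (that would diverge like $n^{5/8}$), and the ``intermediate'' bound $\exp(-c(z)t^2/\log n)$ is neither standard nor obviously derivable from convexity of $\varphi$. What actually makes the balance work---and what the paper proves---is a bound of the form $|\chi_n(t;z)-e^{-t^2/2}|\le \tilde{c}\,e^{-t^2/2}\,n^{3\delta-1/2}$ valid on the whole window $|t|\le Cn^{\delta}$, $\delta\in(0,1/6)$; the factor $e^{-t^2/2}$ is essential, and one then takes $\delta=1/8$. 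In your framework you would need to show that the saddle-point expansion of $\chi_n$ retains the full Gaussian factor, not just the cubic correction, throughout the range $|t|\le n^{1/8}$. Once you make that explicit, the two-regime splitting is unnecessary and the rate $\log(n)/n^{1/8}$ drops out directly.
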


See Appendix \ref{sec:BE_Rn} for the proof of Proposition \ref{prop:clt_R}. The next proposition shows how Proposition \ref{prop:clt_Z} and Proposition \ref{prop:clt_R} interplay with the asymptotic expansions \eqref{mom_m} and \eqref{mom_v}.

\begin{prp}\label{obs:limits_are_coherent}
Under the assumptions of Proposition \ref{prop:clt_Z} and Proposition \ref{prop:clt_R}, there hold
\begin{displaymath}
\mu(z_0 ) = \mathfrak{m}_{\alpha, \lambda}
\end{displaymath}
and
\begin{displaymath}
\sigma^2(z_0) + \Sigma^2\left(\mu'(z_0)\right)^2 = \mathfrak{s}_{\alpha, \lambda}^2
\end{displaymath}
\end{prp}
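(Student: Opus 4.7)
\medskip
\noindent
\textbf{Proof plan.} Both identities are algebraic, so the whole proof reduces to (i) identifying the value $\tau(z_0)$, (ii) differentiating the implicit relation \eqref{eq:tauParis}, and (iii) checking that the cross terms in $\sigma^2(z_0)+\Sigma^2(\mu'(z_0))^2$ telescope. The structural reason the identity holds is a delta-method consistency: in view of \eqref{eq:Kn_repres}, $K_n$ is (in distribution) obtained by plugging the random scalar $Z_n/n$ into the family $R_n(\cdot)$, so the asymptotic variance of $K_n/n$ should match $\sigma^2(z_0)+\Sigma^2(\mu'(z_0))^2$ via a variance-decomposition heuristic. The task is to verify this matching explicitly.

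\medskip
\noindent
\textbf{Step 1: compute $\tau(z_0)$.} I would guess $\tau_0:=\tau(z_0)=(1+1/\lambda)^\alpha$ and plug into \eqref{eq:tauParis}: the left-hand side becomes $(\lambda+1)/\lambda$, while the right-hand side becomes $\tau_0/(\alpha z_0)+1 = 1/\lambda+1$, confirming the guess by uniqueness of $\tau(z_0)$. With $\tau_0$ in hand, the first identity $\mu(z_0)=\mathfrak{m}_{\alpha,\lambda}$ is immediate:
\begin{displaymath}
\mu(z_0)=z_0\Bigl(1-\tfrac{1}{\tau_0}\Bigr)=\tfrac{\lambda}{\alpha}\tau_0-\tfrac{\lambda}{\alpha}=\tfrac{\lambda}{\alpha}\bigl[(1+1/\lambda)^\alpha-1\bigr].
\end{displaymath}

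\medskip
\noindent
\textbf{Step 2: compute $\tau'(z_0)$ and $\mu'(z_0)$.} Rewriting \eqref{eq:tauParis} as $\alpha z\,\tau^{1/\alpha}=\tau+\alpha z$ and differentiating in $z$ gives
\begin{displaymath}
\tau'(z)\bigl(z\,\tau(z)^{1/\alpha-1}-1\bigr)=\alpha\bigl(1-\tau(z)^{1/\alpha}\bigr).
\end{displaymath}
At $z_0$ one has $z_0\tau_0^{1/\alpha-1}=(\lambda+1)/\alpha$ and $1-\tau_0^{1/\alpha}=-1/\lambda$, so
\begin{displaymath}
\tau'(z_0)=-\frac{\alpha^2}{\lambda(\lambda+1-\alpha)}.
\end{displaymath}
Substituting into $\mu'(z)=1-1/\tau(z)+z\tau'(z)/\tau(z)^2$ and using $z_0/\tau_0^2=\lambda/(\alpha\tau_0)$ yields, after simplification,
\begin{displaymath}
\mu'(z_0)=1-\frac{1}{\tau_0}\cdot\frac{\lambda+1}{\lambda+1-\alpha}=1-\frac{A}{\tau_0},\qquad A:=\frac{\lambda+1}{\lambda+1-\alpha}.
\end{displaymath}

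\medskip
\noindent
\textbf{Step 3: assemble.} Since $\alpha z_0+(1-\alpha)\tau_0=(\lambda+1-\alpha)\tau_0$, one obtains
\begin{displaymath}
\sigma^2(z_0)=\tfrac{\lambda\tau_0}{\alpha}-\tfrac{\lambda}{\alpha}-\tfrac{\lambda}{\lambda+1-\alpha}.
\end{displaymath}
Using $1-\alpha/(\lambda+1)=1/A$, one also gets $\Sigma^2=(\lambda/\alpha)\tau_0^2/A$, hence
\begin{displaymath}
\Sigma^2(\mu'(z_0))^2=\tfrac{\lambda}{\alpha}\!\left(\tfrac{\tau_0^2}{A}-2\tau_0+A\right).
\end{displaymath}
Adding the two expressions, the constants $-\lambda/\alpha+(\lambda/\alpha)A=\lambda/(\lambda+1-\alpha)$ cancel with $-\lambda/(\lambda+1-\alpha)$, and the $\tau_0$ terms combine to $-\lambda\tau_0/\alpha$, leaving exactly $\tfrac{\lambda}{\alpha}[\tau_0^2/A-\tau_0]=\mathfrak{s}^2_{\alpha,\lambda}$. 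There is no real obstacle beyond keeping track of the cancellations; the only conceptual input is the closed-form value of $\tau(z_0)$, everything else is implicit differentiation followed by bookkeeping.
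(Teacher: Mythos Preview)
Your proof is correct and follows essentially the same approach as the paper: both verify $\tau(z_0)=(1+1/\lambda)^\alpha$ by direct substitution into \eqref{eq:tauParis}, then obtain $\mu'(z_0)$ by implicit differentiation, and finish with an algebraic check. The only cosmetic difference is that the paper routes the derivative computation through the auxiliary quantity $D(z)=z\tau(z)^{(1-\alpha)/\alpha}-1$ (so that $\tau'=-\tau/(zD)$), whereas you differentiate $\alpha z\,\tau^{1/\alpha}=\tau+\alpha z$ directly and package the constants via $A=(\lambda+1)/(\lambda+1-\alpha)$; both lead to the same expression $\mu'(z_0)=1-(\lambda+1)/[\tau_0(\lambda+1-\alpha)]$ and the same final cancellation.
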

\begin{proof}
Set $\tau_0: = \left(\frac{\lambda+1}{\lambda}\right)^\alpha$. First, 
we prove that $ \tau (z_0) = \tau_0$ by checking that $\tau_0$ is a (and therefore the unique) positive solution to \eqref{eq:tauParis} when $z = z_0$. In particular, it holds
\begin{displaymath}
 \frac{\tau_0}{\alpha z_0} +1 = \frac{\alpha}{\alpha \lambda} \left(\frac{\lambda+1}{\lambda}\right)^\alpha \left(\frac{\lambda}{\lambda+1}\right)^\alpha +1 = \frac{1}{\lambda} + 1 = \frac{\lambda+1}{\lambda}= \tau_0^\frac{1}{\alpha}.
\end{displaymath}
The first  identity then follows by evaluating $\mu$ at $z_0$.
For the second identity, differentiate both sides of \eqref{eq:tauParis} with respect to $z$ and rearrange to obtain that for $z>0$
\begin{displaymath}
\tau'(z) = - \frac{\tau(z)}{z D(z)},
\end{displaymath}
where
\begin{equation}
\label{eq:defD}
D(z) := z\tau(z)^{\frac{1-\alpha}{\alpha}} -1 = \frac{\alpha z + (1-\alpha) \tau(z)}{\alpha \tau(z)}.
\end{equation}
Whence,
\begin{displaymath}
\mu'(z) = z \frac{\tau(z)'}{\tau(z)^2} + 1-\frac{1}{\tau(z)}= - \frac{1}{\tau(z)D(z)} + 1-\frac{1}{\tau(z)}. 
\end{displaymath}
A simple computation yields
    \begin{displaymath}
         D(z_0)  
         = - \left( 1- \frac{\lambda+1}{\alpha}\right)
     \end{displaymath}
and, in turn,
     \begin{displaymath}
         \mu'(z_0) = 1- \frac{\lambda+1}{\tau_0 (\lambda+1-\alpha)}.
    \end{displaymath}
Thus,we can write
      \begin{align*}
      &   \sigma^2(z_0) + \Sigma^2 \cdot \left(\mu'(z_0)\right)^2 \\
      & \quad \quad =   \frac{\lambda}{\alpha} \tau_0 \left( 1-\frac{1}{\tau_0} - \frac{\alpha}{ (\lambda+ 1-\alpha) \tau_0}\right)  - \frac{\lambda}{\alpha} \tau_0^2 \left( 1- \frac{\alpha}{\lambda+1}\right) \left(1 - \frac{\lambda+1}{\tau_0 (\lambda+1-\alpha)}\right)^2\\
 & \quad \quad =   \frac{\lambda}{\alpha} \left\{ \left(\frac{\lambda+1}{\lambda}\right)^{2\alpha} \left(1- \frac{\alpha}{\lambda+1} \right) - \left(\frac{\lambda+1}{\lambda}\right)^\alpha \right\} \\
 & \quad \quad  = \mathfrak{s}_{\alpha, \lambda}^2.
         \end{align*}
\end{proof}

Now, we show how Propositions \ref{prop:clt_Z}, \ref{prop:clt_R} and \ref{obs:limits_are_coherent} can be used to prove the CLT \eqref{clt}. Denoting by $F_{n}$ the cumulative distribution function of the random variable
\begin{displaymath}
\frac{K_n - n\mathfrak{m}_{\alpha, \lambda}}{\sqrt{n\mathfrak{s}_{\alpha, \lambda}^2}}, 
\end{displaymath}
we prove that, for any $x \in \R$, 
\begin{displaymath}
\lim_{n \rightarrow + \infty } F_n(x) := \lim_{ n \rightarrow + \infty } P\left[K_n \le n \mathfrak{m}_{\alpha, \lambda} + \sqrt{n\mathfrak{s}_{\alpha, \lambda}^2} x \right] = \Phi(x).
\end{displaymath}
Denote by $\mu_{\frac{Z_n}{n}}$ the probability distribution of $Z_n/n$. 
By standard properties of conditional probability, we can rewrite the distributional identity \eqref{eq:Kn_repres} as
\begin{align*}
F_n(x) & =\int_0^{+\infty} P\left[R_n(z) \le n \mathfrak{m}_{\alpha, \lambda} + \sqrt{n\mathfrak{s}_{\alpha, \lambda}^2}x\right] \ \mu_{\frac{Z_n}{n}}(\mathrm{d} z) \\
& = \int_0^{+\infty} P\left[W_n(z) \le \frac{\sqrt{n}\, \left[\mathfrak{m}_{\alpha, \lambda} - \mu(z)\right] + \mathfrak{s}_{\alpha, \lambda}x}{\sigma(z)}\right] \ \mu_{\frac{Z_n}{n}}(\mathrm{d} z).
\end{align*}
Whence, $F_n(x) = \mathcal{I}^{(n)}_1(x) + \mathcal{I}^{(n)}_2(x)$, where
\begin{displaymath}
\mathcal{I}^{(n)}_1(x) := \int_0^{+\infty}\Phi\left( \frac{\sqrt{n}\, \left[\mathfrak{m}_{\alpha, \lambda} - \mu(z)\right] + \mathfrak{s}_{\alpha, \lambda}x}{\sigma(z)}\right)\ \mu_{\frac{Z_n}{n}}(\mathrm{d} z) 
\end{displaymath} 
and
\begin{align*}
\mathcal{I}^{(n)}_2(x) &:= \int_0^{+\infty} \left\{F_{W_n(z)}\left( \frac{\sqrt{n}\, \left[\mathfrak{m}_{\alpha, \lambda} - \mu(z)\right] + \mathfrak{s}_{\alpha, \lambda}x}{\sigma(z)}\right) \right.\\
&\quad\quad\quad\quad\quad \left. - \Phi\left( \frac{\sqrt{n}\, \left[\mathfrak{m}_{\alpha, \lambda} - \mu(z)\right] + \mathfrak{s}_{\alpha, \lambda}x}{\sigma(z)}\right)\right\}\ \mu_{\frac{Z_n}{n}}(\mathrm{d} z).
\end{align*}
The proof of the CLT \eqref{clt} is completed by showing that, for every $x\in\mathbb{R}$
\begin{equation}\label{rv_1}
\lim_{n \to + \infty} \mathcal{I}^{(n)}_1(x) =\Phi(x)
\end{equation}
and
\begin{equation}\label{rv_2}
\lim_{n \to + \infty} \mathcal{I}^{(n)}_2(x)=0.
\end{equation}

To prove \eqref{rv_1}, we premise two technical lemmas.
\begin{lem}
\label{lem:lem1_ter}
If $Y$ is a Gaussian random variable with mean $0$ and variance $\Sigma^2$, then
\begin{displaymath}
\mathbb{E} \left[\Phi\left(\frac{\mu'(z_0) Y + \mathfrak{s}_{\alpha, \lambda}x}{\sigma(z_0)} \right)\right] = \Phi(x)
\end{displaymath}
holds for every $x\in \mathbb{R}$.
\end{lem}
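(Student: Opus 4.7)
The plan is to recognize the left-hand side as a probability of a Gaussian event and then reduce it to the identity for $\mathfrak{s}_{\alpha,\lambda}^2$ given in Proposition \ref{obs:limits_are_coherent}. Specifically, introduce an auxiliary random variable $N \sim \mathcal{N}(0,1)$ that is independent of $Y$, so that $\Phi(u) = \mathbb{P}[N \le u]$ for every $u \in \mathbb{R}$. Then, using the independence of $N$ and $Y$ together with Fubini/tower property, I would rewrite
\begin{displaymath}
\mathbb{E}\left[\Phi\left(\frac{\mu'(z_0) Y + \mathfrak{s}_{\alpha, \lambda} x}{\sigma(z_0)}\right)\right] = \mathbb{P}\left[\sigma(z_0) N - \mu'(z_0) Y \le \mathfrak{s}_{\alpha, \lambda} x\right].
\end{displaymath}

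The random variable $V := \sigma(z_0) N - \mu'(z_0) Y$ is a linear combination of two independent centered Gaussian random variables, hence itself centered Gaussian with variance
\begin{displaymath}
\operatorname{Var}(V) = \sigma^2(z_0) + \Sigma^2 \left(\mu'(z_0)\right)^2 = \mathfrak{s}_{\alpha, \lambda}^2,
\end{displaymath}
where the last equality is precisely the second identity of Proposition \ref{obs:limits_are_coherent}. Consequently, $V / \mathfrak{s}_{\alpha, \lambda} \sim \mathcal{N}(0,1)$ and
\begin{displaymath}
\mathbb{P}\left[V \le \mathfrak{s}_{\alpha, \lambda} x\right] = \mathbb{P}\left[V/\mathfrak{s}_{\alpha, \lambda} \le x\right] = \Phi(x),
\end{displaymath}
which yields the claim.

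There is essentially no technical obstacle here: the lemma is a direct algebraic consequence of Proposition \ref{obs:limits_are_coherent}. The only point requiring (brief) care is to ensure that $\mathfrak{s}_{\alpha,\lambda}^2 > 0$ so that the normalization is legitimate; this is immediate from the explicit expression of $\mathfrak{s}_{\alpha,\lambda}^2$ in the statement of Theorem \ref{thm_main}, since $(1+1/\lambda)^{\alpha}\bigl(1 - \alpha/(1+\lambda)\bigr) > 1$ for $\alpha \in (0,1)$ and $\lambda > 0$. Thus the proof amounts to the probabilistic rewriting above combined with the variance identity already available from Proposition \ref{obs:limits_are_coherent}.
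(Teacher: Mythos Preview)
Your proof is correct and follows essentially the same approach as the paper: both introduce an independent standard Gaussian, rewrite the expectation as $\mathbb{P}[\sigma(z_0)N - \mu'(z_0)Y \le \mathfrak{s}_{\alpha,\lambda}x]$, and then invoke the variance identity from Proposition~\ref{obs:limits_are_coherent} to conclude. Your additional remark on the positivity of $\mathfrak{s}_{\alpha,\lambda}^2$ is a welcome bit of care not made explicit in the paper.
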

\begin{proof}
Introduce a standard Gaussian random variable $Z$, independent of $Y$. By standard properties of conditional probability, it holds that
\begin{displaymath}
\Phi\left(\frac{\mu'(z_0) Y + \mathfrak{s}_{\alpha, \lambda} x}{\sigma(z_0)} \right) = P \left[ Z \le \frac{\mu'(z_0) Y + \mathfrak{s}_{\alpha, \lambda}x}{\sigma(z_0)} \ \bigg| \ Y \right],
\end{displaymath}
which implies
\begin{displaymath}
\mathbb{E}\left[ \Phi\left(\frac{\mu'(z_0) Y + \mathfrak{s}_{\alpha, \lambda}x}{\sigma(z_0)} \right)\right]= P \left[ \frac{\sigma(z_0)Z - \mu'(z_0) Y}{\mathfrak{s}_{\alpha, \lambda}} \le x\right].
\end{displaymath}
Notice that $\mathfrak{s}_{\alpha, \lambda}^{-1}[\sigma(z_0)Z - \mu'(z_0) Y]$ has Gaussian distribution with mean $0$ and variance
\begin{displaymath}
 \frac{\sigma(z_0)^2+ (\mu'(z_0))^2 \Sigma^2 }{\mathfrak{s}_{\alpha, \lambda}^2} =1,
\end{displaymath}
thanks to the last identity in Proposition \ref{obs:limits_are_coherent}. This completes the proof.
\end{proof}

\begin{lem}
\label{lem:lemma2_ter}
Let $\psi \in C^0([0, +\infty)) \cap C^1((0, +\infty))$, with bounded (first) derivative. Then, as $n \to + \infty$ there holds
\begin{displaymath}
\sqrt{n} \left[\psi \left(\frac{Z_n}{n}\right) - \psi(z_0) \right] \stackrel{\text{w}}{\longrightarrow}\mathcal{N}\left(0, (\psi'(z_0))^2 \Sigma^2\right)
\end{displaymath}
\end{lem}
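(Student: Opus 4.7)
The plan is to recognize this as a textbook application of the delta method, deriving it from the CLT for $Z_n$ (equation \eqref{eq:clt_zn}) and the LLN for $Z_n/n$ (equation \eqref{eq:lln_zn}) supplied by Proposition \ref{prop:clt_Z}. The first move is a mean value expansion: since $Z_n>0$ almost surely and $\psi\in C^1((0,+\infty))$, for each $n$ there exists a random point $\xi_n$ lying between $Z_n/n$ and $z_0$ such that
\begin{displaymath}
\psi\!\left(\frac{Z_n}{n}\right) - \psi(z_0) = \psi'(\xi_n)\left(\frac{Z_n}{n} - z_0\right).
\end{displaymath}
Multiplying both sides by $\sqrt{n}$ yields the key identity
\begin{displaymath}
\sqrt{n}\left[\psi\!\left(\frac{Z_n}{n}\right)-\psi(z_0)\right] = \psi'(\xi_n)\cdot\frac{Z_n - n z_0}{\sqrt{n}}.
\end{displaymath}

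Next, I would verify that the two factors converge appropriately. From \eqref{eq:lln_zn}, $Z_n/n\stackrel{p}{\longrightarrow} z_0$; since $\xi_n$ lies between $Z_n/n$ and $z_0$, we have $|\xi_n-z_0|\le|Z_n/n-z_0|$, so $\xi_n\stackrel{p}{\longrightarrow}z_0$ as well. Because $z_0>0$ and $\psi'$ is continuous on $(0,+\infty)$, the continuous mapping theorem gives $\psi'(\xi_n)\stackrel{p}{\longrightarrow}\psi'(z_0)$. Meanwhile, \eqref{eq:clt_zn} provides $(Z_n - n z_0)/\sqrt{n}\stackrel{w}{\longrightarrow}\mathcal{N}(0,\Sigma^2)$. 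Applying Slutsky's theorem to the product, one concludes
\begin{displaymath}
\sqrt{n}\left[\psi\!\left(\frac{Z_n}{n}\right)-\psi(z_0)\right] \stackrel{w}{\longrightarrow} \psi'(z_0)\cdot\mathcal{N}(0,\Sigma^2) = \mathcal{N}\!\left(0,(\psi'(z_0))^2\Sigma^2\right),
\end{displaymath}
which is the claim.

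There is essentially no hard step; the only point that requires brief attention is the admissibility of the mean value expansion, since $\psi$ is only $C^1$ on the open half-line $(0,+\infty)$. This is where the assumption $z_0>0$ and the fact that $Z_n/n$ takes values in $(0,+\infty)$ almost surely matter: both endpoints of the segment on which we apply the MVT lie strictly inside the domain of differentiability of $\psi$, so $\xi_n$ does too. The hypothesis that $\psi'$ is bounded on $(0,+\infty)$ is not needed for weak convergence itself (continuity at the single point $z_0$ suffices for the Slutsky step), but it is natural to preserve it here, as it will likely be invoked in subsequent uses of this lemma—for instance, to justify uniform integrability and moment convergence when the lemma is plugged into the computation of $\mathcal{I}_1^{(n)}(x)$ via Lemma \ref{lem:lem1_ter}.
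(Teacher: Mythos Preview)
Your proof is correct and follows essentially the same delta-method route as the paper: write $\psi(Z_n/n)-\psi(z_0)$ as $(Z_n/n-z_0)$ times a derivative factor, show the latter converges in probability to $\psi'(z_0)$, and conclude via Slutsky with \eqref{eq:clt_zn}. The only difference is cosmetic: the paper uses the integral form of the remainder,
\[
\psi\!\left(\tfrac{Z_n}{n}\right)-\psi(z_0)=\left(\tfrac{Z_n}{n}-z_0\right)\int_0^1 \psi'\!\left(z_0+t\big[\tfrac{Z_n}{n}-z_0\big]\right)\mathrm{d}t,
\]
and then invokes boundedness of $\psi'$ together with dominated convergence to pass the limit inside the integral. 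Your mean-value version is slightly more economical (boundedness of $\psi'$ is not needed, as you observe), at the small price of the usual tacit measurability of the random intermediate point $\xi_n$; the paper's integral formulation sidesteps that technicality automatically.
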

\begin{proof}
From the fundamental theorem of calculus, we write that
\begin{equation}\label{part21}
\sqrt{n} \left[\psi \left(\frac{Z_n}{n}\right) - \psi(z_0) \right] = \sqrt{n} \left(\frac{Z_n}{n} - z_0\right)\int_0^1 \psi'\left(z_0 + t \left[ \frac{Z_n}{n} - z_0 \right]\right) \mathrm{d}t.
\end{equation}
According to \eqref{eq:lln_zn}, $n^{-1}Z_{n}-z_{0} \stackrel{p}{\longrightarrow}0$, as $n\rightarrow+\infty$. Since $\psi'$ is bounded, a combination of the dominated convergence theorem with the continuous mapping 
for the convergence in probability entails that, as $n\rightarrow+\infty$
\begin{equation}\label{part22}
\int_0^1 \psi'\left(z_0 + t \left[ \frac{Z_n}{n} - z_0 \right]\right) \mathrm{d}t \stackrel{p}{\longrightarrow} \psi'(z_0).
\end{equation}
By taking into account \eqref{eq:clt_zn} and \eqref{part22}, the conclusion follows from a straightforward application of Slutsky's theorem to the right-hand side of \eqref{part21}. 
\end{proof}

The next proposition combines the two preceding lemmata to obtain the desired behaviour for $\mathcal{I}_1^{(n)}$ in \eqref{rv_1}.
\begin{prp}\label{lem:lem3_ter} 
For every $x \in \mathbb{R}$, 
\begin{displaymath}
\lim_{n \to + \infty} \mathcal{I}^{(n)}_1(x) :=\lim_{n \rightarrow+\infty} \mathbb{E} \left[ \Phi \left(\frac{\sqrt{n} \left[\mathfrak{m}_{\alpha, \lambda} - \mu\left(\frac{Z_n}{n}\right)\right] 
+\mathfrak{s}_{\alpha, \lambda}x}{\sigma\left(\frac{Z_n}{n}\right)}\right)\right] =\Phi(x).
\end{displaymath}
\end{prp}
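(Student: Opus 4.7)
\textbf{Proof plan for Proposition \ref{lem:lem3_ter}.} The strategy is to identify the weak limit of the random variable inside the expectation and then invoke bounded convergence, since $\Phi$ is bounded and continuous. First, Proposition \ref{obs:limits_are_coherent} gives $\mathfrak{m}_{\alpha,\lambda} = \mu(z_0)$, so the argument of $\Phi$ in $\mathcal{I}^{(n)}_1(x)$ can be rewritten as
\begin{displaymath}
A_n(x) := \frac{\sqrt{n}\,[\mu(z_0) - \mu(Z_n/n)] + \mathfrak{s}_{\alpha,\lambda}\, x}{\sigma(Z_n/n)}.
\end{displaymath}
The two pieces — numerator and denominator — are handled separately and then combined via Slutsky.

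For the numerator I would apply Lemma \ref{lem:lemma2_ter} with $\psi = \mu$, obtaining
\begin{displaymath}
\sqrt{n}\,[\mu(z_0) - \mu(Z_n/n)] \stackrel{w}{\longrightarrow} -\mu'(z_0)\, Y, \qquad Y \sim \mathcal{N}(0,\Sigma^2).
\end{displaymath}
For the denominator I would invoke continuity of $\sigma$ at $z_0$ (which follows from the implicit definition of $\tau$ via \eqref{eq:tauParis} and the formula in Proposition \ref{prop:clt_R}, together with $\sigma^2(z_0) > 0$ as verified in Proposition \ref{obs:limits_are_coherent}), combined with the LLN \eqref{eq:lln_zn}, to conclude $\sigma(Z_n/n) \stackrel{p}{\longrightarrow} \sigma(z_0) > 0$ by the continuous mapping theorem. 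Slutsky's theorem then delivers
\begin{displaymath}
A_n(x) \stackrel{w}{\longrightarrow} \frac{-\mu'(z_0)\, Y + \mathfrak{s}_{\alpha,\lambda}\, x}{\sigma(z_0)}.
\end{displaymath}

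Since $\Phi$ is bounded and continuous, the Portmanteau theorem yields
\begin{displaymath}
\lim_{n\to+\infty} \mathcal{I}^{(n)}_1(x) = \mathbb{E}\!\left[\Phi\!\left(\frac{-\mu'(z_0)\, Y + \mathfrak{s}_{\alpha,\lambda}\, x}{\sigma(z_0)}\right)\right].
\end{displaymath}
Using that $-Y \stackrel{d}{=} Y$ (symmetry of the centered Gaussian) and then invoking Lemma \ref{lem:lem1_ter}, the right-hand side equals $\Phi(x)$, which is the desired conclusion.

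The one technical obstacle is the applicability of Lemma \ref{lem:lemma2_ter}: the lemma requires $\psi'$ bounded on $(0,+\infty)$, while the implicitly defined $\mu$ need not satisfy this globally. My plan is a standard cutoff argument: fix $\delta > 0$ small enough that $[z_0-\delta,z_0+\delta] \subset (0,+\infty)$, choose a smooth $\chi$ compactly supported in $(0,+\infty)$ with $\chi \equiv 1$ on $[z_0-\delta,z_0+\delta]$, and set $\tilde\mu := \chi\mu$. Then $\tilde\mu$ satisfies the hypotheses of Lemma \ref{lem:lemma2_ter}, coincides with $\mu$ in a neighborhood of $z_0$, and by \eqref{eq:lln_zn} the event $\{|Z_n/n - z_0|<\delta\}$ has probability tending to $1$, so $\tilde\mu(Z_n/n) = \mu(Z_n/n)$ with probability tending to $1$; hence the delta-method limit transfers from $\tilde\mu$ to $\mu$.
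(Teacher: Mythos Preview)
Your proof is correct and follows essentially the same route as the paper's: rewrite via $\mathfrak{m}_{\alpha,\lambda}=\mu(z_0)$, apply Lemma~\ref{lem:lemma2_ter} with $\psi=\mu$ for the numerator, the LLN \eqref{eq:lln_zn} plus continuity of $\sigma$ for the denominator, combine via Slutsky, and finish with Portmanteau and Lemma~\ref{lem:lem1_ter}. The only difference is that the paper asserts directly that $\mu\in C^0([0,+\infty))\cap C^1((0,+\infty))$ with bounded derivative (which one checks from $\mu'(z)=1-\tfrac{1}{\tau(z)}-\tfrac{1}{\tau(z)D(z)}$, the positivity of $D$ via \eqref{eq:defD}, and the asymptotics $\tau(z)\to+\infty$ as $z\to0^+$, $\tau(z)\to1$ as $z\to+\infty$ from Lemma~\ref{lem:xi_Paris}), so your cutoff argument, while valid, is not needed.
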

\begin{proof}
According to Proposition \ref{obs:limits_are_coherent}, we have $\mathfrak{m}_{\alpha, \lambda} = \mu(z_0)$. Now, it is useful to observe that the function $\mu$ considered in Proposition \ref{prop:clt_R} belongs to $C^0([0, +\infty)) \cap C^1((0, +\infty))$, and has bounded (first) derivative. Therefore, we can apply Lemma \ref{lem:lemma2_ter} for the choice $\psi=\mu$, to obtain that, as $n\rightarrow+\infty$
\begin{displaymath}
\sqrt{n} \left[ \mu(z_0) - \mu\left(\frac{Z_n}{n}\right)\right]\stackrel{w}{\longrightarrow} \mu'(z_0)Y\ ,
\end{displaymath}
where the random variable $Y$ is the same random variable as in Lemma \ref{lem:lem1_ter}.
Further, thanks to the continuity of $\sigma$ and the mapping theorem for the convergence in probability, the LLN \eqref{eq:lln_zn} entails that, as $n\rightarrow+\infty$
\begin{displaymath}
\sigma\left(\frac{Z_n}{n}\right)\stackrel{p}{\longrightarrow} \sigma(z_0).
\end{displaymath}
At this stage, Slutsky's theorem shows that, as $n\rightarrow+\infty$
\begin{displaymath}
\frac{\sqrt{n} \left[ \mu\left(\frac{Z_n}{n}\right) - \mathfrak{m}_{\alpha, \lambda}\right] + \mathfrak{s}_{\alpha, \lambda}x}{\sigma\left(\frac{Z_n}{n}\right)}\stackrel{w}{\longrightarrow}\frac{\mu'(z_0)Y + \mathfrak{s}_{\alpha, \lambda}x}{\sigma\left(z_0\right)}.
\end{displaymath}
Since the (cumulative distribution) function $\Phi$ is bounded and continuous, the proof is completed by applying the Portmanteau theorem and recalling Lemma \ref{lem:lem1_ter}. 
\end{proof}

The next proposition proves \eqref{rv_2} by means of an application of Proposition \ref{prop:clt_R}.
\begin{prp}\label{lem:lem4_ter}
For every $x \in \mathbb{R}$, as $n\rightarrow+\infty$, it holds
\begin{displaymath}
\left| \mathcal{I}^{(n)}_2(x) \right| \le \int_0^{+\infty} \left\|F_{W_n(z)} - \Phi \right\|_{\infty} \mu_{\frac{Z_n}{n}}(\mathrm{d} z)\rightarrow 0.
\end{displaymath}
\end{prp}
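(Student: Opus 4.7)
The inequality $|\mathcal{I}^{(n)}_2(x)| \le \int_0^{+\infty} \|F_{W_n(z)}-\Phi\|_\infty\,\mu_{Z_n/n}(\mathrm{d}z)$ is immediate from the definition of the Kolmogorov distance, since the argument of $F_{W_n(z)}-\Phi$ appearing in $\mathcal{I}_2^{(n)}(x)$ is some real number depending on $n$, $x$ and $z$, and the absolute value of that difference is pointwise dominated by $\|F_{W_n(z)}-\Phi\|_\infty$. So the whole content of the proposition is to show that the integral on the right vanishes. The Berry--Esseen bound from Proposition \ref{prop:clt_R} is exactly the tool for this, but it is only local (uniform on compact subsets of $(0,+\infty)$), so the main work is to combine it with a truncation that uses Proposition \ref{prop:clt_Z}.

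The plan is a standard truncation. Fix any $0<\zeta_0<z_0<\zeta_1<+\infty$, split
\begin{displaymath}
\int_0^{+\infty}\|F_{W_n(z)}-\Phi\|_\infty\,\mu_{Z_n/n}(\mathrm{d}z)=\int_{[\zeta_0,\zeta_1]}(\cdots)+\int_{(0,+\infty)\setminus[\zeta_0,\zeta_1]}(\cdots),
\end{displaymath}
and handle the two pieces separately. On $[\zeta_0,\zeta_1]$, Proposition \ref{prop:clt_R} gives for $n\ge \bar n(\zeta_0,\zeta_1)$ the uniform bound $\|F_{W_n(z)}-\Phi\|_\infty\le C(z)\log(n)/n^{1/8}$; since $C$ is continuous on $(0,+\infty)$, it attains a maximum $C^\ast:=\max_{z\in[\zeta_0,\zeta_1]}C(z)<+\infty$, so this part of the integral is bounded by $C^\ast \log(n)/n^{1/8}$, which tends to $0$ as $n\to+\infty$. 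On the complementary set, we use the trivial bound $\|F_{W_n(z)}-\Phi\|_\infty\le 1$, which reduces the problem to showing that $\mu_{Z_n/n}((0,+\infty)\setminus[\zeta_0,\zeta_1])\to 0$. This is exactly the LLN for $Z_n/n$ stated in \eqref{eq:lln_zn}: since $\zeta_0<z_0<\zeta_1$, the event $\{Z_n/n\notin[\zeta_0,\zeta_1]\}$ is contained in $\{|Z_n/n-z_0|\ge \delta\}$ for $\delta:=\min(z_0-\zeta_0,\zeta_1-z_0)>0$, and its probability vanishes by convergence in probability.

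Putting the two estimates together yields $\limsup_{n\to+\infty}\int_0^{+\infty}\|F_{W_n(z)}-\Phi\|_\infty\,\mu_{Z_n/n}(\mathrm{d}z)=0$, which by the triangle/sup-norm inequality recalled at the start implies $\mathcal{I}^{(n)}_2(x)\to 0$ uniformly in $x\in\mathbb{R}$. I do not expect a genuine obstacle here: the Berry--Esseen estimate \eqref{eq:be_rn} is tailored precisely so that this truncation works, and the only thing one needs is the continuity of $z\mapsto C(z)$ (to control the integrand on a compact set) together with the LLN \eqref{eq:lln_zn} (to make the tail negligible). Note that no uniformity in $x$ is lost, because the bound $|\mathcal{I}_2^{(n)}(x)|$ by the integral of $\|F_{W_n(z)}-\Phi\|_\infty$ already absorbed the dependence on $x$ into the sup norm.
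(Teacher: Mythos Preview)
Your proof is correct and follows the same overall truncation strategy as the paper: split the integral over a compact interval containing $z_0$ and its complement, control the inner piece by the Berry--Esseen bound \eqref{eq:be_rn} and the continuity of $C(\cdot)$, and control the outer piece by the trivial bound $\|F_{W_n(z)}-\Phi\|_\infty\le 1$.

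The only difference is in the handling of the tail piece. You invoke the LLN \eqref{eq:lln_zn} directly: for a \emph{fixed} interval $[\zeta_0,\zeta_1]\ni z_0$, the probability $P(Z_n/n\notin[\zeta_0,\zeta_1])\to 0$, and that is all one needs. The paper instead introduces an auxiliary $\varepsilon>0$, chooses a sub-interval $[\tilde\zeta_0,\tilde\zeta_1]\subset[\zeta_0,\zeta_1]$ of width proportional to $\delta=\Phi^{-1}(1-\varepsilon/2)$, and bounds the tail via the CLT \eqref{eq:clt_zn}, obtaining $\limsup\le\varepsilon$ before letting $\varepsilon\downarrow 0$. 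Your route is more elementary (it avoids the CLT and the extra $\varepsilon$--$\delta$ layer) and yields the same conclusion; the paper's route would only pay off if one were after a quantitative rate in the tail, which is not needed for the proposition as stated.
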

\begin{proof}
The inequality is straightforward, as it follows directly by taking the absolute value inside the integral in $\mathcal{I}^{(n)}_2$. Then, in order to prove convergence to zero, fix $\varepsilon>0$ and choose $\delta = \delta(\varepsilon)>0$ such that $\Phi(\delta) = 1 - \varepsilon/2$. Moreover, fix $\zeta_0$ and $\zeta_1$ as in Proposition \ref{prop:clt_R}.
Set
\begin{displaymath}
\bar{m} := \bar{m}(\varepsilon, \zeta_0, \zeta_1): = \min \left\{m \in \mathbb{N}\text{ : } z_0 - \frac{\delta \Sigma}{\sqrt{m}} >\zeta_0,\text{ and } z_0 - \frac{\delta \Sigma}{\sqrt{m}} <\zeta_1 \right\},
\end{displaymath}
which is well-defined since $z_0 \in (\zeta_0, \zeta_1)$. Set $\tilde{\zeta}_0 := z_0 - \frac{\delta \Sigma}{\sqrt{\bar{m}}}$ and $\tilde{\zeta}_1 := z_0 + \frac{\delta \Sigma}{\sqrt{\bar{m}}} $, and write
\begin{align}\label{par31}
&\int_0^{+\infty} \left\|F_{W_n(z)} - \Phi \right\|_{\infty} \mu_{\frac{Z_n}{n}}(\mathrm{d} z)\\
&\notag\quad = \int_{\tilde{\zeta}_0}^{\tilde{\zeta}_1} \left\|F_{W_n(z)} - \Phi \right\|_{\infty} \mu_{\frac{Z_n}{n}}(\mathrm{d} z) + \int_{\mathbb{R} \smallsetminus [\tilde{\zeta}_0, \tilde{\zeta}_1]} \left\|F_{W_n(z)} - \Phi \right\|_{\infty} \mu_{\frac{Z_n}{n}}(\mathrm{d} z). 
\end{align}
Now, consider the terms on the right-hand side of \eqref{par31} separately. With regard to the first term, inequality \eqref{eq:be_rn} entails that, for every $n \ge \bar{n} $, there hold
\begin{equation}\label{par32}
\int_{\tilde{\zeta}_0}^{\tilde{\zeta}_1}\left\|F_{W_n(z)} - \Phi \right\|_{\infty} \mu_{\frac{Z_n}{n}}(\mathrm{d} z) \le \frac{\log(n)}{n^\frac{1}{8}} \int_{\tilde{\zeta_0}}^{\tilde{\zeta}_1}C(z) \ \mu_{\frac{Z_n}{n}}(\mathrm{d} z) \le \frac{\log(n)}{n^\frac{1}{8} }\, \mathcal{M}_C,
\end{equation}
where $\mathcal{M}_C: = \max_{z \in [\zeta_0, \zeta_1]} C(z)$ is finite since the function $C$ is continuous. Concerning the second term on the right-hand side of \eqref{par31}, for any $n \ge \bar{m}$, there hold
\begin{align}\label{par33}
\int_{\mathbb{R} \smallsetminus [\tilde{\zeta}_0, \tilde{\zeta}_1]} \left\|F_{W_n(z)} - \Phi \right\|_{\infty} \mu_{\frac{Z_n}{n}}(\mathrm{d} z) 
& \le \int_{\mathbb{R} \smallsetminus [\tilde{\zeta}_0, \tilde{\zeta}_1]}\mu_{\frac{Z_n}{n}}(\mathrm{d} z) \\
&\notag = P\left[\frac{Z_n}{n} \notin \left[z_0 - \frac{\delta \Sigma}{\sqrt{\bar{m}}}, z_0 + \frac{\delta \Sigma}{\sqrt{\bar{m}}} \right] \right]\\
&\notag\le P\left[\frac{Z_n}{n} \notin \left[z_0 - \frac{\delta \Sigma}{\sqrt{n}}, z_0 + \frac{\delta \Sigma}{\sqrt{n}} \right] \right]\\
&\notag = P \left[\frac{Z_n- n z_0}{\sqrt{n} \Sigma} \notin [-\delta, \delta]\right].
\end{align}
Thus, combining identity \eqref{par31} with inequalities \eqref{par32}--\eqref{par33} yields
\begin{displaymath}
\int_0^{+\infty} \left\|F_{W_n(z)} - \Phi \right\|_{\infty} \mu_{\frac{Z_n}{n}}(\mathrm{d} z) \le \frac{\log(n)}{n^\frac{1}{8}}\, \mathcal{M}_C + P \left[\frac{Z_n- n z_0}{\sqrt{n} \Sigma} \notin [-\delta, \delta]\right]
\end{displaymath}
for any $n \ge \max(\bar{n}, \bar{m})$. Whence, from \eqref{eq:clt_zn}, 
\begin{align*}
0  & \le \limsup_{n \to + \infty} \int_0^{+\infty} \left\|F_{W_n(z)} - \Phi \right\|_{\infty} \mu_{\frac{Z_n}{n}}(\mathrm{d} z) \\
& \le \lim_{n \to + \infty} \left\{\frac{\log(n)}{n^\frac{1}{8}}\, \mathcal{M}_C + P \left[\frac{Z_n- n z_0}{\sqrt{n} \Sigma} \notin [-\delta, \delta]\right]\right\}\\
& = 0+ 2-2 \Phi(\delta) = \varepsilon,
\end{align*}
thanks to $\Phi(\delta) = 1 - \varepsilon/2$. The proof is completed by exploiting the arbitrariness of $\varepsilon$. 
\end{proof}


\section{Proof of Theorem \ref{thm_main} for $\alpha=0$}\label{sec3}

The proof of Theorem \ref{thm_main} is structured as follows: i) in Section \ref{sec31} we prove the asymptotic expansions \eqref{mom_m} and \eqref{mom_v}; ii) in Section \ref{sec32} we prove the LLN \eqref{lln}; iii) in Section \ref{sec33} we prove the CLT \eqref{clt}. Technical lemmas and propositions are deferred to Appendix \ref{app3}. 

\subsection{Asymptotic expansions \eqref{mom_m} and \eqref{mom_v}}\label{sec31}

To prove \eqref{mom_m} and \eqref{mom_v}, we make use of equations \eqref{app_probgen11} and \eqref{app_probgen21} for the derivatives (first and second derivatives) of the probability generating function $G_{K_{n}}$ of $K_{n}$ in the regime $\theta=\lambda n$, together with the asymptotic expansions of both the Digamma and Trigamma functions for large argument \citep[Equation \href{http://dlmf.nist.gov/5.11.E2} {(5.11.2)}]{nist}. This yields
\begin{align*}
 G_{K_n}'(1)&:=\left.\frac{\ddr }{\ddr s}G_{K_{n}}(s)\right|_{s=1}\\
 & = \lambda n \left[  \log((\lambda+1) n ) - \frac{1}{2 (\lambda +1) n}  -  \log(\lambda  n ) + \frac{1}{2 \lambda  n} + O\left(\frac{1}{n^2} \right) \right]	\\	
 & = n  \lambda \log \left(\frac{\lambda+1}{\lambda}\right) + \frac{1}{2(\lambda+1)}  + O\left(\frac{1}{n} \right) \\
 & =: n \, \mathcal{A} + \mathcal{B} + O\left(\frac{1}{n} \right) 
 \end{align*}
 and 
\begin{align*}
 G_{K_n}''(1)&:=\left.\frac{\ddr^{2} }{\ddr s^{2}}G_{K_{n}}(s)\right|_{s=1}\\
 & = (\lambda n)^2\,  \left\{ \frac{1}{(\lambda+1) n } - \frac{1}{\lambda n} + O\left(\frac{1}{n^2}\right) \right.\\
 &\quad\quad\quad\quad\left.   + \left[ \log \left(\frac{\lambda+1}{\lambda} \right) + \frac{1}{2\lambda (\lambda+1) n }+ O\left(\frac{1}{n^2}\right)\right]^2 \right\} \\
 & = n^2  \lambda^2 \log^2 \left(\frac{\lambda+1}{\lambda}\right) + n  \frac{\lambda+1}{\lambda} \, \left[ \log \left(\frac{\lambda+1}{\lambda}\right) -1 \right] + O(1)\\
  & =: n^2 \, \mathcal{C} + n\mathcal{D} + O(1).
\end{align*}
See Appendix \ref{app31} for details. By properties of the probability generating function,  
\begin{displaymath}
 \mathbb{E}[K_n] = G'_{K_n}(1) = n\,  \mathcal{A} + O(1)\\
\end{displaymath}
and 
\begin{align*}
    \operatorname{Var}\left(K_{n}\right) &= G''_{K_n}(1) + G'_{K_n} (1) - \left(G'_{K_n} (1)\right)^2\\
    &= n^2 \, \left[  \mathcal{C}- \mathcal{A}^2  \right] + n\, \left[ \mathcal{D} + \mathcal{A} - 2\mathcal{A} \mathcal{B} \right] + O(1).
\end{align*}
At this stage, it is enough to show that $\mathcal{A}= \mathfrak{m}_{0, \lambda} $, $\mathcal{C} = \mathfrak{m}_{0, \lambda}^2$ and $\mathcal{D} + \mathfrak{m}_{0, \lambda} - 2\mathfrak{m}_{0, \lambda} \mathcal{B} = \mathfrak{s}_{0, \lambda}^2$. The first two identities are immediate. The third identity can be verified via direct computation. Since $\mathcal{C}- \mathcal{A}^2 =0$, the proof of \eqref{mom_m} and \eqref{mom_v} is completed.

\subsection{LLN \eqref{lln}}\label{sec32}

The proof of the LLN \eqref{lln} is the same as the proof for $\alpha\in(0,1)$.

\subsection{CLT \eqref{clt}}\label{sec33}

To prove the CLT \eqref{clt}, let $\varphi_n$ be the characteristic function of the random variable 
\begin{displaymath}
\frac{K_n - n\mathfrak{m}_{0, \lambda}}{\sqrt{n \mathfrak{s}_{0, \lambda}^2} }.
\end{displaymath}
The proof of the CLT \eqref{clt} follows by the application of the following Berry-Esseen lemma.
\begin{lem}
\label{lem: BE lem_dir}
If $\xi \in \mathbb{R}$ satisfies \begin{equation}
    \label{eq: BE_xi_bound_dir}
    |\xi|\le \mathcal{C}_0\,  \mathfrak{s}_{0, \lambda} \, n^\delta
\end{equation} 
for a positive constant $\mathcal{C}_0$ and some $\delta \in (0, 1/6)$,  then there exists a constant $\tilde{c}$ such that
\begin{displaymath}
    \left|\varphi_{n} (\xi) - e^{- \frac{\xi^2}{2}} \right| \le \tilde{c}\,  e^{ - \frac{\xi^2}{2}}\, n^{3\delta - \frac{1}{2}}. 
\end{displaymath}
\end{lem}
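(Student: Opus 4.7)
The plan is to exploit the classical representation of $K_n$ as a sum of independent Bernoulli variables, valid for $\alpha=0$. Under the Ewens model with $\theta=\lambda n$, one has $K_n=\sum_{i=1}^{n}B_i$ with $B_1,\dots,B_n$ independent and $B_i\sim\mathrm{Bernoulli}(p_i)$, where $p_i:=\lambda n/(\lambda n+i-1)$. Consequently the characteristic function factorises as
\begin{displaymath}
\varphi_n(\xi)=\exp\!\left(-i\xi\frac{n\mathfrak{m}_{0,\lambda}}{\sqrt{n\mathfrak{s}_{0,\lambda}^2}}\right)\prod_{i=1}^{n}\bigl(1-p_i+p_i e^{it}\bigr),\qquad t:=\frac{\xi}{\sqrt{n\mathfrak{s}_{0,\lambda}^2}}.
\end{displaymath}
Under the hypothesis \eqref{eq: BE_xi_bound_dir} with $\delta<1/6$, one has $|t|=O(n^{\delta-1/2})\to 0$, which is the quantitative input that makes a Taylor expansion of each factor legitimate and controls the principal branch of the complex logarithm.

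The core computational step is a fourth-order expansion of the cumulant generating function of a single Bernoulli:
\begin{displaymath}
\log\bigl(1-p+pe^{it}\bigr)=ipt-\tfrac{1}{2}p(1-p)t^2-\tfrac{i}{6}p(1-p)(1-2p)t^3+\rho(p,t),
\end{displaymath}
with an explicit remainder satisfying $|\rho(p,t)|\le c\,p(1-p)\,t^4$ uniformly in $p\in[0,1]$ and $|t|$ small enough; this is the point I expect to be the main technical obstacle, because the bound must carry a factor $p(1-p)$ (rather than merely $p$) in order for the sum over $i$ to behave like $n\mathfrak{s}_{0,\lambda}^{2}\,t^{4}$ instead of $n\,t^{4}$. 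I would obtain this by expanding $\log(1+w)$ around $w=0$ with $w=p(e^{it}-1)$ and grouping terms in cumulant form, using $|e^{it}-1-it+t^2/2|\le |t|^3/6$ and $p|e^{it}-1|\le p\,|t|$ combined with $|p\cdot O(t^2)|\le p(1-p)\,|t|^2+O(p^2 t^2)$ to extract the desired $p(1-p)$ prefactor.

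Summing over $i$ from $1$ to $n$, using the asymptotic expansions \eqref{mom_m}--\eqref{mom_v} proved in Section \ref{sec31} in the form $\sum_{i}p_i=n\mathfrak{m}_{0,\lambda}+O(1)$ and $\sum_{i}p_i(1-p_i)=n\mathfrak{s}_{0,\lambda}^{2}+O(1)$, together with the trivial bound $\sum_{i}p_i(1-p_i)|1-2p_i|\le\sum_{i}p_i(1-p_i)=n\mathfrak{s}_{0,\lambda}^{2}+O(1)$, I would obtain
\begin{displaymath}
\log\varphi_n(\xi)+\tfrac{\xi^2}{2}=it\cdot O(1)+O(t^2)+O\!\bigl(|t|^3 n\bigr)+O\bigl(t^4 n\bigr).
\end{displaymath}
Since $|t|=|\xi|/\sqrt{n\mathfrak{s}_{0,\lambda}^2}$, the first two terms are $O(n^{\delta-1/2})$, the third is of order $|\xi|^{3}/\sqrt{n}\le \mathcal{C}_0^{3}\mathfrak{s}_{0,\lambda}^{3}\,n^{3\delta-1/2}$, and the fourth is of order $\xi^{4}/n\le \mathcal{C}_0^{4}\mathfrak{s}_{0,\lambda}^{4}\,n^{4\delta-1}$. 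Because $\delta<1/6<1/2$, the exponent $3\delta-1/2$ dominates all four error orders, so there exists $c>0$ such that $|\log\varphi_n(\xi)+\xi^2/2|\le c\,n^{3\delta-1/2}$ for $n$ large.

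To conclude, I would exponentiate: write $\varphi_n(\xi)-e^{-\xi^2/2}=e^{-\xi^2/2}\bigl(e^{u_n(\xi)}-1\bigr)$ with $u_n(\xi):=\log\varphi_n(\xi)+\xi^2/2$, and apply the elementary inequality $|e^{u}-1|\le |u|\,e^{|u|}$. Since $|u_n(\xi)|\le c\,n^{3\delta-1/2}\to 0$, the factor $e^{|u_n(\xi)|}$ is bounded by a constant uniformly in $n$ and in $\xi$ in the allowed range, yielding
\begin{displaymath}
\bigl|\varphi_n(\xi)-e^{-\xi^2/2}\bigr|\le \tilde{c}\,e^{-\xi^2/2}\,n^{3\delta-\frac{1}{2}},
\end{displaymath}
which is the claim. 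Beyond the remainder estimate already flagged, the only subtle bookkeeping is the verification that $|p_i(e^{it}-1)|<1$ uniformly in $i$ for $n$ large (ensuring that $\log$ is taken along its principal branch and is additive over the product), which follows from $|e^{it}-1|\le|t|=O(n^{\delta-1/2})$ and $p_i\le 1$.
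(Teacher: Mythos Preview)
Your argument is correct and complete; the approach, however, differs from the paper's. The paper does not work with the product form $\prod_i(1-p_i+p_ie^{it})$ at all. Instead it starts from the closed expression $G_{K_n}(s)=\Gamma(n(s\lambda+1))\Gamma(n\lambda)/[\Gamma(ns\lambda)\Gamma(n(\lambda+1))]$, applies Stirling's approximation with explicit remainder to each of the four Gamma functions, and obtains $G_{K_n}(s)=\exp\{-n[f(s)-f(1)]\}\,\mathfrak{R}_1^{(n)}(s)\mathfrak{R}_2^{(n)}(s)$ with $f(s)=s\lambda\log(s\lambda)-(s\lambda+1)\log(s\lambda+1)$. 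A Taylor expansion of $f$ at $s=1$ then produces the $-\xi^2/2$ directly from $f'(1)=-\mathfrak{m}_{0,\lambda}$ and $f'(1)+f''(1)=-\mathfrak{s}_{0,\lambda}^{2}$, and the three remainder factors are shown to be $1+O_{u.c.}(n^{3\delta-1/2})$. Your route through Bernoulli cumulants is more elementary and stays closer to the probabilistic structure; the paper's route is more analytic but has the advantage of mirroring exactly the machinery used for the $\alpha\in(0,1)$ case (where no Bernoulli decomposition is available), which keeps the two proofs parallel.

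One minor point: your flagged ``main technical obstacle'' --- getting a $p(1-p)$ prefactor in the fourth-order remainder --- is not actually needed. The crude bound $|\rho(p,t)|\le c\,t^{4}$, uniform in $p\in[0,1]$ (which follows from smoothness of $(p,t)\mapsto\log(1-p+pe^{it})$ on $[0,1]\times[-\pi/2,\pi/2]$ and Taylor's theorem), already gives $\sum_i|\rho_i|\le cn\,t^{4}=O(n^{4\delta-1})$, and $4\delta-1<3\delta-\tfrac12$ for every $\delta<\tfrac12$. So your concern that $\sum_ip_i$ versus $\sum_ip_i(1-p_i)$ would change the order is unfounded: both sums are $O(n)$ here, and the resulting error is in either case subdominant to $n^{3\delta-1/2}$.
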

\begin{proof}
We start by recalling a well-known approximation for the Gamma function \citep[Equation \href{https://dlmf.nist.gov/5.11.E10}{(5.11.10)} and Equation \href{https://dlmf.nist.gov/5.11.E11}{(5.11.11)} with $K=1$ therein]{nist}, which will be applied to $G_{K_{n}}$. For $w \in \mathbb{C}$ and $|\text{ph}(w)|<\pi$, as $|w| \to +\infty$
\begin{displaymath}
\Gamma(w) = e^{-w} \, w^w \, \left(\frac{2\pi}{w}\right)^{\frac{1}{2}} \left[1 + \mathfrak{R}(w) \right] =: \mathcal{P}(w) \, \left[1 + \mathfrak{R}(w) \right]
\end{displaymath} 
where, for $|\text{ph}(w)|\le\frac{\pi}{3}$,
\begin{equation}
\label{eq:gammabound}
\left| \mathfrak{R}(w) \right|  \le \frac{1}{2\pi^2 |w| } \left[1 + \min\left\{\sec(\text{ph}(w)), 2\right\}\right] \le \frac{3}{2\pi^2 |w| }. 
\end{equation}
Then,
\begin{align*}
G_{K_n}(s) & = \frac{\Gamma(n(s\lambda+1))}{\Gamma(ns\lambda)}  \frac{\Gamma(n\lambda )}{\Gamma(n(\lambda+1))}\\
& = \frac{\mathcal{P}\left( n(s\lambda+1)\right) \,  \left[1 + \mathfrak{R}(n(s\lambda+1)) \right] \,  \, \mathcal{P}\left( n\lambda \right) \,  \left[1 + \mathfrak{R}(n \lambda) \right]}{\mathcal{P}( n s\lambda) \, \left[1 + \mathfrak{R}(n s\lambda) \right] \,  \, \mathcal{P}\left( n(\lambda+1) \right) \,  \left[1 + \mathfrak{R}(n(\lambda+1)) \right]}\\
& = \left(\frac{ s(\lambda+1)}{s\lambda+1}\right)^\frac{1}{2}  \, \left[ \frac{(s\lambda+1)^{s\lambda+1} \, \lambda^\lambda}{(s\lambda)^{s\lambda} \, (\lambda+1)^{\lambda+1}}\right]^n  \frac{\left[1 + \mathfrak{R}(n (s \lambda+1) ) \right] \, \left[1 + \mathfrak{R}(n \lambda) \right]}{\left[1 + \mathfrak{R}(n s \lambda) \right]\, \left[1 + \mathfrak{R}(n( \lambda+1)) \right]} 
\end{align*}
for every $s \in \mathbb{C}$ such that $|\text{ph}(s)|\le \pi$. Based on the above expression for $G_{K_n}$, we set
\begin{displaymath}
 \mathfrak{R}_1^{(n)}(s) := \frac{\left[1 + \mathfrak{R}(n (s \lambda+1) ) \right] \, \left[1 + \mathfrak{R}(n \lambda) \right]}{\left[1 + \mathfrak{R}(n s \lambda) \right]\, \left[1 + \mathfrak{R}(n( \lambda+1)) \right]}, 
 \end{displaymath}
 \begin{displaymath}
 \mathfrak{R}_2^{(n)}(s) : = \left(\frac{ s(\lambda+1)}{s\lambda+1}\right)^\frac{1}{2} 
  \end{displaymath}
and 
 \begin{displaymath}
 f(s) : =  s\lambda  \log \left(s\lambda\right) -  (s\lambda + 1) \log\left(s\lambda+1\right)\ ,
   \end{displaymath}
so that $G_{K_n}(s)  =   \exp\{-n \left[f(s) - f(1)\right]\}   \mathfrak{R}_1^{(n)}(s)  \mathfrak{R}_2^{(n)}(s)$. Accordingly, we write 
\begin{align*}
 \varphi_n(\xi)  & = \exp\left\{-\sqrt{n} \, \frac{i\, \xi\,  \mathfrak{m}_{0, \lambda}}{\mathfrak{s}_{0, \lambda}} \right\} \, G_{K_n} \left(e^\frac{i \, \xi }{\sqrt{n} \mathfrak{s}_{0, \lambda}}\right)  \\
    & =  \exp\left\{-\sqrt{n} \, \frac{i\, \xi\,  \mathfrak{m}_{0, \lambda}}{\mathfrak{s}_{0, \lambda}} \right\} \,   \exp\left\{-n \left[f\left(e^\frac{i \, \xi }{\sqrt{n} \mathfrak{s}_{0, \lambda}}\right) - f(1)\right] \right\}  \\
    &\quad\times\mathfrak{R}_1^{(n)}\left(e^\frac{i \, \xi }{\sqrt{n} \mathfrak{s}_{0, \lambda}} \right)  \mathfrak{R}_2^{(n)}\left(e^\frac{i \, \xi }{\sqrt{n} \mathfrak{s}_{0, \lambda}} \right).
    \end{align*}
If $\xi\in\mathbb{R}$ satisfies \eqref{eq: BE_xi_bound_dir}, then \citet[Chapter IV, Lemma 5]{Pet(75)} guarantees that
\begin{displaymath}
    \left|e^\frac{i \, \xi }{\sqrt{n} \mathfrak{s}_{0, \lambda}}-1\right| \le \left| \frac{\xi }{\sqrt{n} \mathfrak{s}_{0, \lambda}}\right| \le \mathcal{C}_0 \, n^{\delta - \frac{1}{2}}.
\end{displaymath} 
Hence, by applying Taylor's formula to $f (e^\frac{i \, \xi }{\sqrt{n} \mathfrak{s}_{0, \lambda}} ) - f(1)$ and to $(e^\frac{i \, \xi }{\sqrt{n} \mathfrak{s}_{0, \lambda}} - 1)$, we obtain
\begin{align}\label{cart_parz}
    \varphi_{n}(\xi) &  = \exp\left\{ - \sqrt{n} \, \frac{i \, \xi }{\mathfrak{s}_{0, \lambda}} \left[  \mathfrak{m}_{0, \lambda}  + f'(1) \right] - \frac{ \, \xi^2 }{2 \, \mathfrak{s}_{0, \lambda}^2} \left[ f'(1) + f''(1)\right]\right\}\times \\
    &\notag\quad\times \mathfrak{R}_1^{(n)}\left( e^\frac{i \, \xi }{\sqrt{n} \mathfrak{s}_{0, \lambda}}\right) \, \mathfrak{R}_2^{(n)}\left(e^\frac{i \, \xi }{\sqrt{n} \mathfrak{s}_{0, \lambda}}\right) \, \mathfrak{R}_3^{(n)}\left( \xi\right),
\end{align}
where 
\begin{align*}
   \mathfrak{R}_3^{(n)}\left( \xi\right) & := \exp\left\{ n \, f'(1) \left[\left(e^\frac{i \, \xi }{\sqrt{n} \mathfrak{s}_{0, \lambda}} - 1\right) -\left(\frac{i \, \xi }{\sqrt{n} \mathfrak{s}_{0, \lambda}} - \frac{\xi^2 }{n \mathfrak{s}_{0, \lambda}^2}\right) \right] \right.\\
   &\quad + \frac{n}{2}\, f''(1) \left[ \left(e^\frac{i \, \xi }{\sqrt{n} \mathfrak{s}_{0, \lambda}} - 1\right)^2 + \frac{\xi^2}{n \mathfrak{s}_{0, \lambda}^2}\right]\\
   &\quad \left. + \frac{n}{2 } \left(e^\frac{i \, \xi }{\sqrt{n} \mathfrak{s}_{0, \lambda}} - 1\right)^3 \int_0^1 f'''\left(1 + t \left(e^\frac{i \, \xi }{\sqrt{n} \mathfrak{s}_{0, \lambda}} - 1\right)\right) \, (1-t)^2 \, \mathrm{d} t\right\}.
\end{align*}
By direct computation,
\begin{displaymath}
 f'(1) = - \lambda \log\left(\frac{\lambda+1}{\lambda}\right) = -\mathfrak{m}_{0, \lambda}
 \end{displaymath}
 and
 \begin{displaymath}
f'(1) + f''(1) = -\mathfrak{m}_{0, \lambda} + \frac{\lambda}{\lambda+1} = -\mathfrak{s}_{0, \lambda}^2.
\end{displaymath}
Accordingly, for every $\xi\in\mathbb{R} $ that satisfies \eqref{eq: BE_xi_bound_dir}, we can rewrite $ \varphi_{n}$ in \eqref{cart_parz} as
\begin{displaymath}
   \varphi_{n}(\xi) = \exp\left\{- \frac{ \, \xi^2 }{2 } \right\} \, \mathfrak{R}_1^{(n)}\left(e^\frac{i \, \xi }{\sqrt{n} \mathfrak{s}_{0, \lambda}}\right) 
   \, \mathfrak{R}_2^{(n)}\left(e^\frac{i \, \xi }{\sqrt{n} \mathfrak{s}_{0, \lambda}}\right) \, \mathfrak{R}_3^{(n)}\left(\xi \right).
\end{displaymath}
By combining \eqref{eq:gammabound} and \citet[Chapter IV, Lemma 5]{Pet(75)}, for any $\xi$ satisfying \eqref{eq: BE_xi_bound_dir}, one has that
\begin{displaymath}
\mathfrak{R}^{(n)}\left(\xi\right)  := \mathfrak{R}_1^{(n)}\left(e^\frac{i \, \xi }{\sqrt{n} \mathfrak{s}_{0, \lambda}}\right) \, \mathfrak{R}_2^{(n)}\left(e^\frac{i \, \xi }{\sqrt{n} \mathfrak{s}_{0, \lambda}}\right) \, \mathfrak{R}_3^{(n)}\left(\xi \right),
\end{displaymath}
is continuous and satisfies, as $n \rightarrow +\infty$
\begin{equation}
\label{eq: remainders_dir}
    \mathfrak{R}^{(n)}\left(\xi\right) = 1 + O\left(n^{3 \delta - \frac{1}{2}}\right)
\end{equation}
uniformly on compact sets. We refer to Appendix \ref{app: sec_BE_dir} for a detailed proof of \eqref{eq: remainders_dir}. Then, for every $n \in \N$, one can write
\begin{displaymath}
    n^{  -3\delta + \frac{1}{2}} \left|\mathfrak{R}^{(n)}\left(\xi\right)- 1\right| \le  S( \xi), 
\end{displaymath}
with $S(\xi) : = \sup_{n \in \mathbb{N}}  n^{-3\delta + \frac{1}{2}} \left|\mathfrak{R}^{(n)}\left( \xi\right)- 1\right|$, and Lemma \ref{lem: sup is continuous} guarantees that the function $S$ is continuous.
Thus, for every $\xi\in\mathbb{R}$ satisfying \eqref{eq: BE_xi_bound_dir}, there exists $\tilde{c}$ such that $\left|\mathfrak{R}^{(n)}\left( \xi\right)- 1\right| \le  \tilde{c}\,   n^{  3\delta - \frac{1}{2}}$, completing the proof.
\end{proof}


\section{Proof of the strong LLN \eqref{lln_strong}}\label{sec4}

To clarify how to interpret the sequence $\{K_{n}\}_{n\geq1}$ for $\alpha\in[0,1)$ and $\theta=\lambda n$, with $\lambda>0$, it is useful to recall the definition of  the two-parameter Poisson-Dirichlet distribution. Among possible definitions, a simple and intuitive one follows from the stick-breaking construction \citep{Per(92),Pit(97)}. For each $n \in \N$, let $\{V_j^{(n)}\}_{j \geq 1}$ be a sequence of independent random variables such that for $j\geq1$ 
\begin{displaymath}
V_j^{(n)} \sim \text{Beta}(1-\alpha, \lambda n + \alpha j),
\end{displaymath}
where $\text{Beta}(a,b)$ denotes the Beta distribution with parameter $a,b>0$. Now set
\begin{displaymath}
W_1^{(n)} := V_1^{(n)}
\end{displaymath}
and, for $j\geq2$
\begin{displaymath}
W_j^{(n)} := V_j^{(n)} \prod_{l=1}^{j-1}(1- V_l^{(n)})
\end{displaymath}
so that $W_j^{(n)}\in(0,1)$ for $j\geq1$ and $\sum_{j\geq1}W_j^{(n)}=1$ almost surely. For each $n\in\mathbb{N}$, the two-parameter Poisson-Dirichlet distribution with $\alpha\in[0,1)$ and $\theta=\lambda n$, with $\lambda>0$, 
can be also seen as the law of the decreasing random probabilities $(W_j^{(n)})_{j\geq1}$. See \citet[Chapter 4]{Pit(06)} and references therein for further details.

Let $\{\tilde{\mathfrak{p}}_n\}_{n \geq 1}$ be the sequence of random probability measures defined by $\tilde{\mathfrak{p}}_n = \sum_{j \geq 1} W_j^{(n)} \delta_j$, meaning that $\tilde{\mathfrak{p}}_n$ takes values in the space of all probability measures on $(\N, 2^{\N})$, $2^{\N}$ denoting the power set of $\N$; $\tilde{\mathfrak{p}}_n$ is typically referred to as the two-parameter Pitman-Yor process \citep{Pit(95),Pit(97)}. For each $n \in \N$, consider a sequence $\{X_i^{(n)}\}_{i \geq 1}$ of exchangeable $\N$-valued random variables having $\tilde{\mathfrak{p}}_n$ as directing random probability measure, that is, the $X_i^{(n)}$'s are conditionally i.i.d. given $\tilde{\mathfrak{p}}_n$. Due to the discreteness of $\tilde{\mathfrak{p}}_n$, for each $n \in \N$ the random variables $\{X_1^{(n)}, \dots, X_n^{(n)}\}$ induce a random partition of $\{1,\ldots,n\}$, and we denote by $K_n^{(n)}$ the number of blocks of such a random partition. Obviously, for each $n \in \N$, it holds that
\begin{equation} \label{KnKn}
K_n^{(n)} \stackrel{d}{=} K_n
\end{equation}
with $K_n$ being the number of blocks in the Ewens-Pitman model with $\alpha\in[0,1)$ and $\theta=\lambda n$, with $\lambda>0$; that is the distribution of $K_{n}$ follows by marginalizing \eqref{epsm} with $\theta=\lambda n$, for $\lambda>0$. However, the well-known Markov structure of $\{K_n\}_{n \geq 1}$ \citep[Proposition 9]{Pit(95)} does not extends to $\{K_n^{(n)}\}_{n \geq 1}$, whatever the $k$-dimensional laws $(k \geq 2)$ of the sequence $\{K_n^{(n)}\}_{n \geq 1}$.

The strong LLN \eqref{lln_strong} is thus concerned with the sequence $\{K_n^{(n)}\}_{n \geq 1}$, but the proof we present ensues only from \eqref{KnKn}, independently of the 
$k$-dimensional laws $(k \geq 2)$ of the sequence $\{K_n^{(n)}\}_{n \geq 1}$. For this reason, and for notational convenience, we drop the superscript from $K_n^{(n)}$. The way is now paved for the proof of the strong LLN \eqref{lln_strong}. As for the LLN, we consider the identity
\begin{displaymath}
\frac{K_n - n \mathfrak{m}_{\alpha, \lambda}}{n} = \frac{K_n - \mathbb{E} \left[ K_n\right] }{n} + \frac{\mathbb{E} \left[ K_n\right] - n \mathfrak{m}_{\alpha, \lambda}}{n}.
\end{displaymath}
The almost sure convergence to $0$ of the first term on the right-hand side follows from a standard application of the Borel--Cantelli lemma, after proving that 
\begin{displaymath}
\sum_{n = 0}^{+\infty} P \left( \left| K_n - \mathbb{E}[K_n]\right| > n\varepsilon  \right) < + \infty
\end{displaymath}
holds true for any $\varepsilon >0$. In particular, this can be proved by combining Markov's inequality (with exponent $p=4$) with the fact that, as $n \to + \infty$ 
\begin{equation} \label{mom4Kn}
\mathbb{E} \left[\left(K_n - \mathbb{E} \left[ K_n\right] \right)^4 \right] = O\left(n^2\right).
\end{equation}
See Appendix  \ref{app:mom4Kn} for details on \eqref{mom4Kn}. This completes the proof of the strong LLN \eqref{lln_strong}.


\section{Proof of the Berry-Esseen inequality \eqref{eq:be_dir}}

We start by combining Lemma \ref{lem: BE lem_dir} with  the well-known inequality \citep[Chapter V, Theorem 2]{Pet(75)}
\begin{align*}
    \left\|F_{n} - \Phi \right\|_\infty &\le \int_{| \xi| \le \mathcal{C}\,\sigma(z) \, n^\delta} \left|\frac{\varphi_{n} (\xi) - e^{- \frac{\xi^2}{2}}}{\xi} \right| \, \mathrm{d} \xi + \tilde{\mathcal{C}}{n^{-\delta}} \\
    &\le  \int_{-\frac{1}{n}}^{\frac{1}{n}} \left|\frac{\varphi_{n} (\xi) - e^{- \frac{\xi^2}{2}}}{\xi} \right| \, \mathrm{d} \xi + \frac{\tilde{c}  }{n^{ - 3\delta +\frac{1}{2}}} \int_{\frac{1}{n}}^{+\infty} \frac{e^{- \frac{\xi^2}{2}}}{\xi} \, \mathrm{d} \xi + \tilde{\mathcal{C}}{n^{-\delta}}\\
    & =: I_1 + I_2 + \tilde{\mathcal{C}}{n^{-\delta}}.
\end{align*}
We consider separately the terms $I_{1}$ and $I_{2}$. For $I_{1}$, we combine the triangle inequality, \citet[Chapter IV, Lemma 5]{Pet(75)} and  \citet[Section 8.4, Theorem 1, Equation (4)]{CT(97)} to write that, in a neighborhood of $0$, it holds
\begin{align*}
    &\left|\frac{\varphi_{n}(\xi) - e^{-\frac{\xi^2}{2}}}{\xi} \right|\\ 
     &\quad\le \left|\mathbb{E} \left[\frac{K_n - n\mathfrak{m}_{0, \lambda}}{\sqrt{n\mathfrak{s}_{0, \lambda}^2}}\right] \right|  + \frac{1}{2} \mathbb{E} \left[\left(\frac{K_n - n\mathfrak{m}_{0, \lambda}}{\sqrt{n\mathfrak{s}_{0, \lambda}^2}}\right)^2\right]  \left|\xi \right| +\frac{1}{2} \left|\xi \right| + \frac{1}{8} \left|\xi^4 \right|.
\end{align*}
From \eqref{mom_m} and \eqref{mom_v}, we obtain
\begin{displaymath}
  \left|\mathbb{E} \left[\frac{K_n - n\mathfrak{m}_{0, \lambda}}{\sqrt{n\mathfrak{s}_{0, \lambda}^2}}\right] \right|  = \frac{1}{\sqrt{n\mathfrak{s}_{0, \lambda}^2}}  \left|\mathbb{E} \left[K_n\right]  - n \mathfrak{m}_{0, \lambda}\right| \le \frac{S_1}{\sqrt{n}},  
\end{displaymath}
where $S_1 : = \frac{1}{\mathfrak{s}_{0, \lambda}} \, \sup_{n \in \mathbb{N}}\left|\mathbb{E} \left[K_n\right]  - n \mathfrak{m}_{0, \lambda} \right|$. An analogous argument allows to prove 
\begin{displaymath}
  \mathbb{E} \left[\left(\frac{K_n - n\mathfrak{m}_{0, \lambda}}{\sqrt{n\mathfrak{s}_{0, \lambda}^2}}\right)^2\right]  \le \frac{S_2}{n}
\end{displaymath}
where $S_2 : = \frac{1}{\mathfrak{s}_{0, \lambda}} \, \sup_{n \in \mathbb{N}}\left| \operatorname{Var}[K_n] - n \left(\mathcal{D}+\mathfrak{m}_{0, \lambda}- 2\mathfrak{m}_{0, \lambda}\mathcal{B}\right)\right|$. Accordingly, we write 
\begin{displaymath}
    I_1 \le \frac{2 S_1}{n^{3/2}} + \frac{2 S_2+ 1/2}{n^3} + \frac{1}{40 \, n^5} \le \mathcal{C} n^{- 3/2}.
\end{displaymath}
For $I_{2}$, argue as in the proof of the Berry-Esseen theorem for $R_n(z)$ to conclude that
\begin{displaymath}
    I_2 \le   \tilde{c}_1 \log(n)  \, n^{3\delta - \frac{1}{2}}.
 \end{displaymath}
In conclusion,
\begin{displaymath}
    \left\|F_n - \Phi \right\|_\infty \le    C_1 n^{-\frac{3}{2}} + C_2 \log(n)  \, n^{ - \frac{1}{2} + 3\delta}  +  \tilde{\mathcal{C}}{n^{-\delta}}
 \end{displaymath}
  for some positive constants $C_1$ and $C_2$. To conclude, it is easy to see that for every $\delta \in (0, 1/6)$, $\min\left(\delta,   -3\delta +\frac{1}{2} , \frac{3}{2}\right) \ge \frac{1}{8}$, which produces the rate in \eqref{eq:be_dir}.


\section{Discussion}\label{sec5}

Motivated by the genetic interpretation of the Ewens model, \citet{Fen(07)} first investigated the the large $n$ asymptotic behaviour of $K_{n}$ in the regime $\theta=\lambda n$, with $\lambda>0$, providing a large deviation principle \citep[Theorem 4.1 and Theorem 4.4]{Fen(07)}. In this paper, we considered the same regime under the Ewens-Pitman model of parameters $\alpha\in[0,1)$ and $\theta>0$, which includes the Ewens model as a special case of $\alpha=0$. For $\alpha\in[0,1)$, our main results in Theorem \ref{thm_main} provide a LLN and a CLT for $K_{n}$ in the regime $\theta=\lambda n$, as well as a Berry-Esseen refinement of the CLT for $\alpha=0$. For $\alpha\in(0,1)$ our results rely on the compound Poisson construction of $K_{n}$, which led to novel large $n$ approximation results for the negative-Binomial compound Poisson random partition and for the Mittag-Leffler distribution function. From \citet[Theorem 2]{Dol(21)}, an analogous compound Poisson construction holds for the number $M_{r,n}$ of blocks with frequency $1\leq r\leq n$ in Ewens-Pitman random partition; see also \citep{Cha(07),Dol(20)}. Therefore, we expect that the strategy developed to prove Theorem \ref{thm_main} can be applied to provide analogous LLNs and CLTs for $M_{r,n}$. More generally, \citet[Theorem 2]{Dol(21)} suggests that one may consider the problem of extending Theorem \ref{thm_main} to other statistics of the Ewens-Pitman random partition, such as $(M_{1,n},\ldots,M_{p,n})$ and $M_{p,n}/K_{n}$ for $p<n$. Instead, for $\alpha=0$ the extension of Theorem \ref{thm_main} to $M_{r,n}$, as well as to statistics thereof, may be developed by means of more direct Berry-Esseen type calculations, along the lines of Lemma \ref{lem: BE lem_dir}.

Theorem \ref{thm_main} paves the way to future research in Bayesian (nonparametric) statistics for ``species sampling" problems \citep{Lij(07),Fav(09),Bal(24)}. In such a context, one of the major problem is the estimation of the number of unseen species in a sample. Specifically, given $n\geq1$ observed individuals belonging to different species (or symbols), which are modeled as a random sample $(X_{1},\ldots,X_{n})$ from the two-parameter Poisson-Dirichlet distribution, the unseen-species problem calls for estimating
\begin{displaymath}
K_{m}^{(n)}=|\{X_{n+1},\ldots,X_{n+m}\}\setminus \{X_{1},\ldots,X_{n}\}|,
\end{displaymath}
namely the number of hitherto unseen (distinct) species that would be observed if $m\geq1$ additional samples $(X_{n+1},\ldots,X_{n+m})$ were collected from the same distribution. Then, the interest is in the conditional distribution of $K_{m}^{(n)}$ given the Ewens-Pitman random partition $(K_{n},\mathbf{N}_{n})$ induced by $(X_{1},\ldots,X_{n})$ (i.e., the posterior distribution), whose expectation provides a Bayesian estimator of $K_{m}^{(n)}$. For $\alpha\in(0,1)$, in analogy with the almost-sure fluctuation \eqref{as_limit}, the large $m$ asymptotic behaviour of the posterior distribution has been characterized, with applications to approximate credible intervals for the Bayesian estimator \citep{Fav(09)}. For $\alpha\in[0,1)$ one may consider the problem of investigating such a large $m$ asymptotic behaviour in the regime $\theta=\lambda m$, with $\lambda>0$, developing a posterior counterpart of Theorem \ref{thm_main} to introduce large $m$ Gaussian credible intervals for  the Bayesian estimator of $K_{m}^{(n)}$. Work through this direction is ongoing \citep{Con(24)}.


\section*{Appendices}
\addcontentsline{toc}{section}{Appendices}
\renewcommand{\thesubsection}{\Alph{subsection}}



\renewcommand{\theequation}{A.\arabic{equation}}

\setcounter{equation}{0}

\subsection{Technical results related to Theorem \ref{thm_main} for $\alpha\in(0,1)$}\label{app2}


\subsubsection{Asymptotic expansions \eqref{mom_m} and \eqref{mom_v}}\label{app21}

Based on the distribution of $K_{n}$ \citep[Equation 3.11]{Pit(06)}, a direct calculation leads to the following expressions:
\begin{displaymath}
\mathbb{E}\left[(K_n)_{\downarrow j}\right] = \left[\frac{\lambda n}{\alpha}\right]_{(j)} \sum_{i=0}^j (-1)^{j-i} \binom{j}{i} \frac{[\lambda n + i\alpha]_{(n)}}{[\lambda n]_{(n)}} \qquad (j \in \N)
\end{displaymath}
where $(x)_{\downarrow j} := \prod_{i=0}^{j-1} (x - i)$ denotes the falling factorial of $x$. Accordingly, we write
\begin{equation}\label{eq: mean_kn_gen}
\mathbb{E} \left[K_n\right]=\mathbb{E}\left[(K_n)_{\downarrow 1}\right]=\left(\frac{\theta}{\alpha}\right)\left[-1 + \frac{\Gamma(\theta+n+\alpha)}{\Gamma(\theta + \alpha)} \, \frac{\Gamma(\theta + n)}{\Gamma(\theta)}\right]
\end{equation}
and
\begin{align}\label{eq: var_kn_gen}
& \text{Var}(K_n) \\
\nonumber &\quad= \mathbb{E}\left[(K_n)_{\downarrow 2}\right] + \mathbb{E}\left[K_n\right] - \left(\mathbb{E}\left[K_n\right]\right)^2\\
 \nonumber &\quad=  \frac{\theta}{\alpha}\left(\frac{\theta}{\alpha} + 1\right) \left[1 - 2 \frac{\Gamma(\theta+n+\alpha)}{\Gamma(\theta+n)} \frac{\Gamma(\theta)}{\Gamma(\theta+\alpha)} + \frac{\Gamma(\theta+n+2\alpha)}{\Gamma(\theta+n)} \frac{\Gamma(\theta)}{\Gamma(\theta+2\alpha)} \right]\\
    \nonumber &\quad\quad+ \frac{\theta}{\alpha} \left[-1 + \frac{\Gamma(\theta+n+\alpha)}{\Gamma(\theta+n)} \frac{\Gamma(\theta)}{\Gamma(\theta+\alpha)} \right]\\
    \nonumber&\quad\quad-\left(\frac{\theta}{\alpha}\right)^2 \left[ 1+ \left(\frac{\Gamma(\theta+n+\alpha)}{\Gamma(\theta+n)} \frac{\Gamma(\theta)}{\Gamma(\theta+\alpha)}\right)^2 -2 \frac{\Gamma(\theta+n+\alpha)}{\Gamma(\theta+n)}\frac{\Gamma(\theta)}{\Gamma(\theta+\alpha)} \right]. 
\end{align}
See \citet[Theorem 2.1]{Ber(24)} for a proof of \eqref{eq: mean_kn_gen} and \eqref{eq: var_kn_gen} based on the sequential construction of the Ewens-Pitman model. If $\theta=\lambda n$, \eqref{eq: mean_kn_gen} becomes
\begin{align}\label{eq: mean_kn_gen_final}
 \mathbb{E}\left[K_n\right]& = \frac{\lambda n}{\alpha} \left[-1 + \frac{\Gamma \left(\alpha+ (\lambda +1) n\right) \ \Gamma(\lambda n)}{\Gamma(\alpha + \lambda n) \ \Gamma\left((\lambda +1) n\right)} \right].
 \end{align}
Similarly, if $\theta=\lambda n$, \eqref{eq: var_kn_gen} becomes
 \begin{align}\label{eq: var_kn_gen_final}
&\text{Var}(K_n) \\
&\notag\quad= \frac{\lambda n}{\alpha}\left(\frac{\lambda n}{\alpha} + 1\right) \left[1 - 2 \frac{\Gamma(n(\lambda+1)+\alpha)}{\Gamma(n(\lambda+1))} \frac{\Gamma(\lambda n)}{\Gamma(\lambda n +\alpha)} \right. +\frac{\Gamma(n(\lambda+1)+2\alpha)}{\Gamma(n(\lambda+1))} \left.  \frac{\Gamma(\lambda n)}{\Gamma(\lambda n +2\alpha)} \right] \\ 
&\notag\quad\quad+ \frac{\lambda n}{\alpha}  \left[-1 + \frac{\Gamma(n(\lambda+1)+\alpha)}{\Gamma(n(\lambda+1))} \frac{\Gamma(\lambda n)}{\Gamma(\lambda n +\alpha)} \right] \\
&\notag\quad\quad -\left(\frac{\lambda n}{\alpha}\right)^2 \left[ 1+ \left(\frac{\Gamma(n(\lambda+1)+\alpha)}{\Gamma(n(\lambda+1))} \frac{\Gamma(\lambda n)}{\Gamma(\lambda n +\alpha)}\right)^2 -2 \frac{\Gamma(n(\lambda+1)+\alpha)}{\Gamma(n(\lambda+1))} \frac{\Gamma(\lambda n)}{\Gamma(\lambda n +\alpha)} \right].
\end{align}
To conclude, an application of \citep[Equation 1]{TE(51)} to \eqref{eq: mean_kn_gen_final} and \eqref{eq: var_kn_gen_final} yields the final expressions given in Section \ref{sec21} after straightforward algebraic
manipulations.

 \subsubsection{Proof of equation \eqref{mom4Kn}}\label{app:mom4Kn}

By standard combinatorial relations, 
\begin{equation} \label{momento_4}
\mathbb{E} \left[\left(K_n - \mathbb{E} \left[ K_n\right] \right)^4 \right]  
 =\sum_{r = 0}^4 (-1)^{4-r} \binom{4}{r} \mathbb{E}[K_n]^{4-r} \sum_{j = 1}^r S(r, j) \mathbb{E} \left[(K_n)_{\downarrow j} \right]
\end{equation}
where $S(r, j)$ denotes the Stirling number of the second kind, and $(x)_{\downarrow j} := \prod_{i=0}^{j-1} (x - i)$ is the falling factorial of $x$. Now, rewrite the explicit expression of the falling factorial moments of $K_n$
already given in \ref{app21} as
\begin{displaymath}
\mathbb{E} \left[(K_n)_{\downarrow j}\right] = \frac{\Gamma(\theta/\alpha + j)}{\Gamma(\theta/\alpha)}
\sum_{i = 0}^r (-1)^{j-i} \binom{j}{i} \frac{\Gamma(\theta + i\alpha + n)}{\Gamma(\theta+ i\alpha)} \frac{\Gamma(\theta)}{\Gamma(\theta+n)}.
\end{displaymath}
Setting $\theta = \lambda n$, resort again to \citep[Equation 1]{TE(51)} to obtain
\begin{align*}
   \mathbb{E} \left[(K_n)_{\downarrow 1} \right] & = n \cdot \frac{\lambda}{\alpha} \left( L - 1 \right)  - L \,  \frac{\alpha-1}{2( \lambda+1)} + O \left(n^{-1}\right)\\
     \mathbb{E} \left[(K_n)_{\downarrow 2} \right] & = n^2 \cdot \frac{\lambda^2}{\alpha^2} \left(L - 1 \right)^2 \\
     &\quad  + n \cdot \frac{\lambda}{\alpha}  \left[ (L-1)^2 + L \, \frac{\alpha -1}{\lambda+1} - L^2 \, \frac{2\alpha-1}{\lambda+1}\right] + O(1)\\
      \mathbb{E} \left[(K_n)_{\downarrow 3} \right] & = n^3 \cdot \frac{\lambda^3}{\alpha^3} \left(L - 1 \right)^3 \\
      &\quad +   n^2 \cdot 3\,  \frac{\lambda^2}{\alpha^2} \left[ \left(L - 1 \right)^3  - L \, \frac{\alpha -1}{2(\lambda+1)} + L^2 \, \frac{2\alpha-1}{\lambda+1}  - L^3 \, \frac{3\alpha -1}{2(\lambda+1)}\right]\\
      &\quad + O\left(n\right)\\
       \mathbb{E} \left[(K_n)_{\downarrow 4} \right] & = n^4 \cdot \frac{\lambda^4}{\alpha^4} \left(L - 1 \right)^4 \\
      &\quad +   n^3 \cdot 2\,  \frac{\lambda^3}{\alpha^3} \left[ 3  \left(L - 1 \right)^4  + L \, \frac{\alpha -1}{\lambda+1} -3  L^2 \, \frac{2\alpha-1}{\lambda+1}  +3 L^3 \, \frac{3\alpha -1}{2(\lambda+1)} \right. \\
      & \quad \quad \quad \quad \quad \left.- L^4 \, \frac{4\alpha-1}{\lambda+1}\right]+ O\left(n^2\right)\
\end{align*}
where $L  := \left(\frac{\lambda+1}{\lambda} \right)^\alpha$. Since $\mathbb{E} \left[(K_n)_{\downarrow 1}\right] = \mathbb{E}[K_n]$, the above identities lead to
  \begin{align*}
   \mathbb{E}[K_n]^2 & =  n^2 \cdot \frac{\lambda^2}{\alpha^2} \left(L - 1 \right)^2  - n \cdot \frac{\lambda}{\alpha} \,  (L-1) L\,  \frac{\alpha -1}{\lambda+1}+ O \left(1 \right)\\
     \mathbb{E}[K_n]^3 & = n^3 \cdot \frac{\lambda^3}{\alpha^3} \left(L - 1 \right)^3 - n^2 \cdot 3 \frac{\lambda^2}{\alpha^2}\,  (L-1)^2 L\, \frac{\alpha -1}{2(\lambda+1)} +O(n)\\
          \mathbb{E}[K_n]^4 & = n^3 \cdot \frac{\lambda^3}{\alpha^3} \left(L - 1 \right)^3 \\
      &\quad +   n^2 \cdot 3\,  \frac{\lambda^2}{\alpha^2} \left[ \left(L - 1 \right)^3  - L \, \frac{\alpha -1}{2(\lambda+1)} + L^2 \, \frac{2\alpha-1}{\lambda+1}  - L^3 \, \frac{3\alpha -1}{2(\lambda+1)}\right]\\
      &\quad + O\left(n\right).
      \end{align*}
Therefore, \eqref{momento_4} reduces to
      \begin{align*}
     &  \mathbb{E} \left[\left(K_n - \mathbb{E} \left[ K_n\right] \right)^4 \right]     \\
      & \quad =  - 3  \mathbb{E}[K_n]^4  + 6  \mathbb{E}[K_n]^3 + 6 \mathbb{E}[K_n]^2  \mathbb{E} \left[(K_n)_{\downarrow 2} \right]  -12  \mathbb{E}[K_n]  \mathbb{E} \left[(K_n)_{\downarrow 2}\right]\\
     & \quad  \quad   - 4 \mathbb{E}[K_n]  \mathbb{E} \left[(K_n)_{\downarrow 3} \right] + 6   \mathbb{E} \left[(K_n)_{\downarrow 3} \right] +  \mathbb{E} \left[(K_n)_{\downarrow 4} \right]\\
     &  \quad   = n^4 \cdot \mathfrak{A}(\alpha, \lambda) + n^3 \cdot \mathfrak{B}(\alpha, \lambda) + O\left(n^2\right)
      \end{align*}
      with 
      \begin{displaymath}
      \mathfrak{A}(\alpha, \lambda) = \frac{\lambda^4}{\alpha^4} (L-1)^4 \left(-3+6-4+1\right)
      \end{displaymath}
      and
      \begin{align*}
       \mathfrak{B}(\alpha, \lambda) & =2 \, \frac{\lambda^3}{\alpha^3}  \left\{ 3 (L-1)^3 L \, \frac{\alpha-1}{\lambda+1} + 3 (L-1)^3 \right. \\ 
       & \left.\quad + 3 (L-1)^2 \left[ (L-1)^2 + L \, \frac{\alpha-1}{\lambda + 1} - L^2 \, \frac{2\alpha - 1}{\lambda+1}  - L(L-1) \, \frac{\alpha -1}{\lambda+1}\right] \right. \\
       & \left. \quad -6  (L-1)^3  + (L-1)^3 L \, \frac{\alpha-1}{\lambda+1} + 3 (L-1)^3  \right. \\
       & \left. \quad -3 (L-1) \, \left[ 2 (L-1)^3  -  L \, \frac{\alpha-1}{\lambda+1}  + 2L^2 \, \frac{2\alpha-1}{\lambda+1} - L^3 \, \frac{3\alpha-1}{\lambda+1}\right] \right. \\
       & \left. \quad + 3(L-1)^4 + L\, \frac{\alpha -1}{\lambda+1} - 3L^2 \ \frac{2\alpha-1}{\lambda+1} +  3L^3 \ \frac{3\alpha-1}{\lambda+1} - L^4  \ \frac{4\alpha-1}{\lambda+1}\right\}.
      \end{align*}
      Finally, straightforward algebraic computations show that
      \begin{displaymath}
       \mathfrak{A}(\alpha, \lambda) =  \mathfrak{B}(\alpha, \lambda) =0, 
       \end{displaymath} 
       proving equation \eqref{mom4Kn}.

 \subsubsection{Proof of Proposition \ref{prop:clt_Z}}\label{sec: Zn}

The espression of $\mathbb{E}[S_{\lambda n, \alpha}]$ is known form \eqref{eq: mean stable}. 
Furthermore, one has that 
\begin{displaymath}
    \mathbb{E}\left[G_{A, 1}^{k\alpha}\right] = \frac{\Gamma(A+ k\alpha)}{\Gamma\left(A\right) }.
\end{displaymath}
By independence, 
\begin{displaymath}
    \mathbb{E}\left[Z_n\right] = \mathbb{E}\left[S_{\lambda n, \alpha}\right]\mathbb{E}\left[G_{(\lambda+1)n, 1}^{\alpha}\right] = 
    \frac{\lambda n}{\alpha}\frac{\Gamma (\lambda n )}{ \Gamma \left(\lambda n + \alpha \right)}\frac{\Gamma((\lambda + 1)n + \alpha)}{\Gamma\left((\lambda + 1)n \right) }.
\end{displaymath}
Identity \eqref{mom_Zn1} follows by applying \citep[Equation 1]{TE(51)}  to the above expression. Analogously, 
\begin{displaymath}
    \mathbb{E}\left[Z_n^2\right] = \mathbb{E}\left[S_{\lambda n, \alpha}^2\right]  \mathbb{E}\left[G_{(\lambda+1)n, 1}^{2\alpha}\right] = 
    \frac{\lambda n}{\alpha} \left(\frac{\lambda n}{\alpha} + 1\right) \frac{\Gamma (\lambda n )}{ \Gamma \left(\lambda n + 2\alpha \right)} \frac{\Gamma((\lambda + 1)n + 2\alpha)}{\Gamma\left((\lambda + 1)n \right) }
\end{displaymath}
which, upon rearrangement of the terms, entails
\begin{align*}
    \operatorname{Var}\left[Z_n\right] & =  \mathbb{E}\left[Z_n^2\right] - \left(\mathbb{E}\left[Z_n\right]\right)^2 \\
    & =  \frac{\lambda^2 n^2}{\alpha^2}  \left[ \frac{\Gamma (\lambda n )}{ \Gamma \left(\lambda n + 2\alpha \right)}   \frac{\Gamma((\lambda + 1)n + 2\alpha)}{\Gamma\left((\lambda + 1)n \right) } -   \frac{\Gamma (\lambda n )^2}{ \Gamma \left(\lambda n + \alpha \right)^2}   \frac{\Gamma((\lambda + 1)n + \alpha)^2}{\Gamma\left((\lambda + 1)n \right)^2}\right]\\
    &\quad + \frac{\lambda n }{\alpha} \left[\frac{\Gamma (\lambda n )}{ \Gamma \left(\lambda n + 2\alpha \right)}   \frac{\Gamma((\lambda + 1)n + 2\alpha)}{\Gamma\left((\lambda + 1)n \right) } \right].
\end{align*}
Again \citep[Equation 1]{TE(51)}, simplified for the purpose of an approximation to the principal order, yields
\begin{align*}
    \operatorname{Var}\left[Z_n\right] & =  \frac{\lambda^2 n^2}{\alpha^2}  \left[ \left(\frac{\lambda+1}{\lambda}\right)^{2\alpha} \left(1 - \frac{\alpha (2\alpha-1)}{\lambda(\lambda+1)n} + O\left(\frac{1}{n^2}\right)\right) \right. \\
    &\quad \left.-  \left(\frac{\lambda+1}{\lambda}\right)^{2\alpha} \left(1 - \frac{\alpha (\alpha-1)}{\lambda(\lambda+1)n} + O\left(\frac{1}{n^2}\right)\right)\right]\\
    &\quad + \frac{\lambda n }{\alpha} \left[\left(\frac{\lambda+1}{\lambda}\right)^{2\alpha} + O \left(\frac{1}{n}\right) \right].
\end{align*}
A direct computation shows that the coefficient of $n^2$ vanishes, while the coefficient of $n$ rewrites as $\Sigma^2$, proving identity \eqref{mom_Zn2}.

\begin{proof}[Proof of the LLN \eqref{eq:lln_zn}] Fix $\varepsilon >0$ and combine Chebyshev's inequality with the (variance) asymptotic expansion \eqref{mom_Zn2} to get
\begin{align*}
    P\left[\left| \frac{Z_n - \mathbb{E}[Z_n]}{n} \right|> \varepsilon \right] &= P \left[\left| Z_n - \mathbb{E}[Z_n] \right| > n\varepsilon \right]\\
    & \le \frac{\operatorname{Var}(Z_n)}{n^2 \varepsilon^2}\\
    &  = O\left(\frac{1}{n}\right) \rightarrow 0
\end{align*}
as $n \rightarrow +\infty$. Since \eqref{mom_Zn1} implies that $n^{-1}\mathbb{E}[Z_n]\rightarrow z_0$ as $n \rightarrow+\infty$, the proof is concluded. 
\end{proof} 

\begin{proof}[Outline of the proof of the CLT \eqref{eq:clt_zn}] The proof consists of three steps.
\begin{enumerate}
    \item In Lemma \ref{lem:lem4_quater}, we provide an approximation of the distribution of $S_{\lambda n, \alpha}$ in terms of the distribution of the $(1-\alpha)$-th power of a Gamma random variable. 
    More precisely, denote by $d_{\text{TV}}$ the total variation distance between distributions on $(0,+\infty)$, namely
      \begin{displaymath}
        d_{\text{TV}}\left(\mu, \nu \right) : = \sup_{A \in \mathscr{B}((0,+\infty))} \left| \mu(A) - \nu(A)\right|.
    \end{displaymath}
    We will show that
    \begin{displaymath}
        d_{\text{TV}}\left(\mu_{S_{\lambda n, \alpha}}, \,  \mu_{G_{\rho n + \tau, B}
        ^{1-\alpha}}\right) =O\left(\frac{1}{n} \right),
    \end{displaymath}
    where $G_{\rho n + \tau, B}$ has $\operatorname{Gamma}\left(\rho n + \tau, B \right)$ distribution, for a suitable choice of parameters $\rho, \tau$ and $B$.
    \item In Lemma \ref{lem:lem_3_quater}, we will prove a general CLT for variables of the form
    \begin{displaymath}
        \left(\sum_{i = 0}^n X_i\right)^{1-\alpha}  \left(\sum_{i = 1}^n Y_i\right)^{\alpha},
    \end{displaymath}
where $\{X_i\}$ and $\{Y_i\}$ are two independent i.i.d. sequences of positive random variables. As an immediate consequence (Corollary \ref{cor:cor1_quater}), we derive a CLT for products of powers of independent Gamma random variables, which we then apply to the auxiliary variable $\hat{Z_n}: = G_{\rho n+ \tau, B}
        ^{1-\alpha}  G_{(\lambda+1)n, 1}^\alpha$.
    \item Thanks to a consequence of Lemma \ref{lem:lem4_quater} and to a further general result (Lemma  \ref{lem:lem5_quater}), we will show that it is possible to replace the approximating Gamma random variable with the random variable $S_{\lambda n, \alpha}$ in the CLT for $\hat{Z}_n$, obtaining the desired limit theorem for $Z_n$. 
\end{enumerate}
\end{proof}

\begin{paragraph}{\underline{Step 1} of the proof of the CLT \eqref{eq:clt_zn}}
\begin{lem}
    \label{lem:Zol}
   Let $E_\alpha : (0, +\infty) \to (0, +\infty)$ be defined by
\begin{displaymath}
    E_\alpha(y)  := \frac{1}{\sqrt{2\pi\alpha (1-\alpha)}} \, \left(\frac{\alpha}{y}\right)^{\frac{2-\alpha}{2(1-\alpha)}} \, \exp\left\{-(1-\alpha) \left(\frac{\alpha}{y}\right)^{\frac{\alpha}{1-\alpha}} \right\}.
\end{displaymath}
 Then, for any $y \in (0, +\infty)$, there hold
 \begin{align}
\left| \frac{f_\alpha(y)}{E_\alpha(y)} -1 \right| &\le C_\alpha y^{\frac{\alpha}{1-\alpha}} \label{eq:Zol_bound_1}\\
\nonumber \\
\left| \frac{f_\alpha(y)}{E_\alpha(y)} -1  - Q_\alpha y^{\frac{\alpha}{1-\alpha}}\right| &\le K_\alpha y^{\frac{2 \alpha}{1-\alpha}} \label{eq:Zol_bound_2}
\end{align}
for suitable positive constants $C_\alpha, \, Q_\alpha, \, K_\alpha$ depending only on $\alpha$.
\end{lem}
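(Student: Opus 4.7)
The formula defining $E_\alpha(y)$ is precisely the leading saddle-point approximation of the positive $\alpha$-stable density $f_\alpha$ as $y\to 0^+$, so the natural strategy is to invoke the full asymptotic expansion of $f_\alpha$ near zero, as developed in Zolotarev's book (\citet{Zol(86)}, Section 2.5), and then verify that the leading term coincides with $E_\alpha(y)$ while the remainder has the required order. Concretely, Zolotarev establishes an expansion of the form
\begin{displaymath}
f_\alpha(y) \;=\; E_\alpha(y)\,\Bigl[1 + \sum_{k\ge 1} a_k(\alpha)\, y^{k\alpha/(1-\alpha)}\Bigr],
\end{displaymath}
valid as $y \to 0^+$, together with an explicit remainder estimate after truncation. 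Truncating at $k=1$ and taking $C_\alpha$ large enough yields \eqref{eq:Zol_bound_1} in a neighbourhood $(0,\delta]$ of the origin; truncating at $k=2$ and setting $Q_\alpha:=a_1(\alpha)$ gives \eqref{eq:Zol_bound_2} on the same interval.

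To promote the local estimates to bounds valid on all of $(0,+\infty)$, I would use a simple gluing argument. Both $f_\alpha$ and $E_\alpha$ are continuous and strictly positive on $(0,+\infty)$, so $y\mapsto f_\alpha(y)/E_\alpha(y)-1$ is continuous there, and on any compact interval $[\delta,M]$ it is bounded; since $y^{\alpha/(1-\alpha)}$ is bounded below on $[\delta,M]$ by a positive constant, the quotient can be absorbed into $C_\alpha$ (and similarly into $K_\alpha$ with $y^{2\alpha/(1-\alpha)}$). For the regime $y\to +\infty$ I would combine the known power-law tail $f_\alpha(y)\sim c_\alpha y^{-1-\alpha}$ (Zolotarev again) with the obvious asymptotic $E_\alpha(y)\sim c'_\alpha\, y^{-(2-\alpha)/(2(1-\alpha))}$ coming directly from the definition. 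A short computation shows that the exponent of the ratio $f_\alpha(y)/E_\alpha(y)$ is
\begin{displaymath}
-(1+\alpha) + \frac{2-\alpha}{2(1-\alpha)} \;=\; \frac{\alpha(2\alpha-1)}{2(1-\alpha)},
\end{displaymath}
which is strictly smaller than $\alpha/(1-\alpha)$ for every $\alpha\in(0,1)$ (the inequality reduces to $2\alpha-1\le 2$). Hence $|f_\alpha(y)/E_\alpha(y)-1|$ is dominated by a constant times $y^{\alpha/(1-\alpha)}$ for $y\ge M$, and after enlarging $C_\alpha$ the inequality \eqref{eq:Zol_bound_1} holds on all of $(0,+\infty)$; the same argument with the stronger exponent $2\alpha/(1-\alpha)$ gives \eqref{eq:Zol_bound_2}.

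The main technical point is extracting \eqref{eq:Zol_bound_2} rather than \eqref{eq:Zol_bound_1} from the saddle-point analysis, because this requires an \emph{explicit} identification of the first correction coefficient $Q_\alpha=a_1(\alpha)$ together with a quantitative bound on the second-order remainder. I would obtain both by applying the method of steepest descent to the Mellin-Barnes (or Bromwich) integral representation of $f_\alpha$: after locating the stationary point (which gives the exponential factor and the Gaussian prefactor encoded in $E_\alpha$), one expands the phase to one further order, computes the resulting Gaussian integral exactly to read off $Q_\alpha$, and controls the tail contributions by standard contour deformation arguments to get the $y^{2\alpha/(1-\alpha)}$ error. This is the only step that is not bookkeeping; the rest of the proof is a combination of a cited expansion, continuity on compacts, and an explicit check of exponents in the heavy tail.
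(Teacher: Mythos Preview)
Your proposal is correct and follows essentially the same route as the paper: invoke Zolotarev's small-$y$ expansion (Theorem 2.5.2 in \citet{Zol(86)}) to obtain the bounds in a neighbourhood of the origin, then use the known power-law tail of $f_\alpha$ together with the explicit asymptotics of $E_\alpha$ to control the large-$y$ regime by comparing exponents. The paper simply cites Zolotarev's theorem twice (once for each bound) and checks the exponent inequality $2\alpha-1<1$, whereas you are more explicit about the compact-interval gluing and about how one would rederive $Q_\alpha$ via steepest descent; but the structure of the argument is the same, and your exponent computation $\alpha(2\alpha-1)/\bigl(2(1-\alpha)\bigr)$ is in fact the correct one (the paper's displayed exponent omits the factor of $2$ in the denominator, though this is immaterial to the conclusion).
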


\begin{proof}
\citet[Theorem 2.5.2]{Zol(86)} shows that, as $y \to 0$
\begin{displaymath}
    \frac{f_\alpha(y)}{E_\alpha(y)} = 1 + O\left(y^\frac{\alpha}{1-\alpha}\right)
\end{displaymath}
Furthermore, it is known that, as $y \to +\infty$,
\begin{displaymath}
    f_\alpha(y) = O\left(y^{-(1+\alpha)}\right) \, \text{ and } \, 
    E_\alpha(y) = O\left(y^{ - \frac{2 - \alpha}{2(1-\alpha)}}\right).
\end{displaymath}
Whence, as $y \to +\infty$
\begin{displaymath}
    \frac{f_\alpha(y)}{E_\alpha(y)} = O\left(y^{\frac{\alpha(2\alpha -1)}{1-\alpha}}\right).
\end{displaymath}
Since $2\alpha-1<1$ for every $\alpha \in (0, 1)$, \eqref{eq:Zol_bound_1} is proven. To prove \eqref{eq:Zol_bound_2}, use again \citet[Theorem 2.5.2]{Zol(86)} to show that, as $y \to 0$
\begin{displaymath}
    \frac{f_\alpha(y)}{E_\alpha(y)} = 1 + Q_\alpha y^\frac{\alpha}{1-\alpha} +  O\left(y^{\frac{2\alpha}{1-\alpha}}\right)
\end{displaymath}
holds with a suitable constant $Q_\alpha$. The conclusion is obtained by considering the previous asymptotic behaviours as $y \to +\infty$.
\end{proof}

\begin{lem}
    \label{lem:lem4_quater}
    Set $\rho = \lambda \frac{1-\alpha}{\alpha}$, $\tau = \frac{1}{2}$, $B = (1-\alpha) \, \alpha^{\frac{\alpha}{1-\alpha}}$. Then, letting $\mu_{S_{\lambda n, \alpha}}$ and $\mu_{G_{\rho n + \tau, B}^{1-\alpha}}$
    stand for the distributions of $S_{\lambda n, \alpha}$ and $G_{\rho n + \tau, B}^{1-\alpha}$, respectively, as $n \to +\infty$ there holds
    \begin{displaymath}
        d_{\text{TV}}\left(\mu_{S_{\lambda n, \alpha}}, \,  \mu_{G_{\rho n + \tau, B}^{1-\alpha}}\right) =O\left(\frac{1}{n} \right).
    \end{displaymath}
   \end{lem}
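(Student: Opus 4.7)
The plan is to bound the total variation distance via Scheffé's identity
$$d_{\text{TV}}(\mu_{S_{\lambda n,\alpha}},\, \mu_{G_{\rho n+\tau, B}^{1-\alpha}}) = \tfrac{1}{2}\int_0^{+\infty}\bigl|f_{S_{\lambda n,\alpha}}(s) - f_n^\star(s)\bigr|\,\mathrm{d}s,$$
where $f_n^\star$ denotes the density of $G_{\rho n+\tau, B}^{1-\alpha}$. Starting from the explicit formula
$$f_{S_{\lambda n,\alpha}}(s) = \frac{\Gamma(\lambda n)}{\Gamma(\lambda n/\alpha)}\, s^{(\lambda n-1)/\alpha - 1}\,f_\alpha(s^{-1/\alpha}),$$
I would apply Lemma \ref{lem:Zol} to the factor $f_\alpha(s^{-1/\alpha})$: bound \eqref{eq:Zol_bound_1} allows the replacement of $f_\alpha$ by the elementary function $E_\alpha$ at a pointwise relative cost $C_\alpha y^{\alpha/(1-\alpha)} = C_\alpha s^{-1/(1-\alpha)}$.

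A direct substitution $y=s^{-1/\alpha}$ inside $E_\alpha(y)$ produces an exponential factor $\exp\{-(1-\alpha)\alpha^{\alpha/(1-\alpha)} s^{1/(1-\alpha)}\} = \exp\{-B s^{1/(1-\alpha)}\}$ with exactly the declared $B$, together with an algebraic prefactor. Collecting powers of $s$, the resulting approximate density $\tilde f_n$ takes the form $C_n\, s^{(a-1+\alpha)/(1-\alpha)}\exp\{-B s^{1/(1-\alpha)}\}$ with $a = \rho n + \tau$; equating the exponent of $s$ with that of $f_n^\star$ is a short algebraic identification that forces $\rho = \lambda(1-\alpha)/\alpha$ and $\tau = 1/2$, matching the statement. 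Thus $\tilde f_n$ and $f_n^\star$ share the same functional shape and differ only by a multiplicative constant.

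To control this constant, I would combine Stirling's expansion applied to $\Gamma(\lambda n)/\Gamma(\lambda n/\alpha)$ and to $\Gamma(\rho n+\tau)$. After the cancellations induced by the explicit value of $B$ and by the $\alpha^{(2-\alpha)/(2(1-\alpha))}$ prefactor coming from $E_\alpha$, one verifies that the constant in $\tilde f_n$ coincides with $B^{\rho n + \tau}/[(1-\alpha)\Gamma(\rho n+\tau)]$ up to a multiplicative factor $1 + O(1/n)$. Alternatively, since both $\tilde f_n$ (up to $O(1/n)$ in $L^1$, by the previous step) and $f_n^\star$ integrate to $1$, the ratio of constants is automatically $1 + O(1/n)$, shortening the bookkeeping.

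The total variation is then bounded by $\int|f_{S_{\lambda n,\alpha}} - \tilde f_n| + \int|\tilde f_n - f_n^\star|$. For the first summand, integrating the relative bound $C_\alpha s^{-1/(1-\alpha)}$ against $\tilde f_n$ uses that $S_{\lambda n,\alpha}$ concentrates at scale $s\asymp n^{1-\alpha}$ (since $\mathbb{E}[S_{\lambda n,\alpha}]\asymp n^{1-\alpha}$ by Stirling in \eqref{eq: mean stable}), so that the typical value of $s^{-1/(1-\alpha)}$ is $O(1/n)$; the second summand is $O(1/n)$ from the Stirling step. The main obstacle I anticipate is the tail analysis as $s\to 0^+$, where $y=s^{-1/\alpha}\to+\infty$ and the Zolotarev relative bound does not decay; however, in that region the factor $s^{(\lambda n-1)/\alpha - 1}$ in $f_{S_{\lambda n,\alpha}}$ vanishes to a very high $n$-dependent order, making both densities (and their difference) exponentially negligible there. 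A routine dominated-convergence argument, combined with the sharper two-term bound \eqref{eq:Zol_bound_2} if needed to pin down leading coefficients, then delivers the claimed $O(1/n)$ rate.
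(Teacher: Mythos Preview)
Your proposal is correct and follows essentially the same route as the paper: Scheff\'e's identity, replacement of $f_\alpha$ by $E_\alpha$ via Lemma~\ref{lem:Zol}, identification of the resulting functional form with the target Gamma-power density, and separate $O(1/n)$ control of the normalization mismatch (the paper's $\mathcal{J}_1^{(n)}$) and the Zolotarev remainder integral (the paper's $\mathcal{J}_2^{(n)}$). Your normalization shortcut---inferring the constant ratio is $1+O(1/n)$ from the fact that both densities integrate to $1$ once the remainder integral is known to be $O(1/n)$---is a clean alternative to the paper's direct Stirling computation of that ratio.
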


\begin{proof} Since $\mu_{S_{\lambda n, \alpha}}$ and $\mu_{G_{\rho n + \tau, B}^{1-\alpha}}$ have densities
       \begin{align*}
f_{S_{\lambda n, \alpha}}(x) &= \frac{\Gamma(\lambda n)}{\Gamma \left(\frac{\lambda n}{\alpha}\right)} \,  x^{\frac{\lambda n -1}{\alpha} -1} \, f_\alpha \left(x^{-\frac{1}{\alpha}}\right)  \mathds{1}_{(0, +\infty)}(x) \\
f_{G_{\rho n + \tau, B}^{1-\alpha}}(x) &= \frac{\left[(1-\alpha) \alpha^\frac{\alpha}{1-\alpha}\right]^{\frac{1-\alpha}{\alpha}\lambda n + \frac{1}{2}}}{(1-\alpha) \Gamma \left(\frac{1-\alpha}{\alpha}\lambda n + \frac{1}{2}\right)}\,  \exp\left\{-(1-\alpha) \alpha^\frac{\alpha}{1-\alpha}\right\}\,  x^{\frac{\lambda n}{\alpha} + \frac{2\alpha -1}{2(1-\alpha)}} \mathds{1}_{(0, +\infty)}(x)
        \end{align*}       
respectively, the total variation distance at issue can be handled as follows:
\begin{align*}
 d_{\text{TV}}\left(\mu_{S_{\lambda n, \alpha}}, \,  \mu_{G_{\rho n + \tau, B}}\right) & = 
\frac{1}{2} \int_0^{+\infty} \left| f_{S_{\lambda n, \alpha}}(x) -  f_{G_{\rho n + \tau, B}^{1-\alpha}} (x) \right| \, \mathrm{d} x\\
\text{(bound \eqref{eq:Zol_bound_1})} &\le \frac{1}{2}  \int_0^{+\infty} \left|\frac{\Gamma(\lambda n) }{\Gamma\left(\frac{\lambda n}{\alpha }\right)}\, x^{\frac{\lambda n-1}{\alpha} - 1} \, E_\alpha\left(x^{-\frac{1}{\alpha}} \right) - f_{G_{\rho n + \tau, B}^{1-\alpha}}(x)  \right| \, \mathrm{d} x\\
    &\quad + \frac{1}{2} C_\alpha  \int_0^{+\infty} \frac{\Gamma(\lambda n) }{\Gamma\left(\frac{\lambda n}{\alpha }\right)}\, x^{\frac{\lambda n-1}{\alpha} - 1} \, E_\alpha\left(x^{-\frac{1}{\alpha}} \right) \, x^{- \frac{1}{1-\alpha}} \, \mathrm{d} x  \\
    & = : \frac{1}{2}  \mathcal{J}_1^{(n)} + \frac{C_\alpha}{2}   \mathcal{J}_2^{(n)}.
\end{align*}
Thus, te proof is concluded if we show that
\begin{displaymath}
   \mathcal{J}_1^{(n)}  = O\left(\frac{1}{n}\right) \quad \, \text{ and } \, \quad
     \mathcal{J}_2^{(n)} = O\left(\frac{1}{n}\right).
\end{displaymath}
Simple algebraic manipulations yield
\begin{align*}
    \mathcal{J}_1^{(n)} &= \left|\frac{\Gamma(\lambda n) }{\Gamma\left(\frac{\lambda n}{\alpha }\right)} \frac{\alpha^\frac{2-\alpha}{2(1-\alpha)}}{\sqrt{2\pi\alpha(1-\alpha)}} - \frac{B^{\frac{1-\alpha}{\alpha }\lambda n + \frac{1}{2}}}{(1-\alpha) \Gamma\left(\frac{1-\alpha}{\alpha} \lambda n + \frac{1}{2}\right)} \right| \times\\
    &\quad \times   \int_0^{+\infty} \exp {\left\{ -  B x^\frac{1}{1-\alpha}\right\}} \, x^{\frac{\lambda n}{\alpha} + \frac{2\alpha -1}{2(1-\alpha)}} \, \mathrm{d} x\\
    %
    & = \left|\frac{\Gamma(\lambda n) }{\Gamma\left(\frac{\lambda n}{\alpha }\right)} \frac{\alpha^\frac{2-\alpha}{2(1-\alpha)}}{\sqrt{2\pi\alpha(1-\alpha)}} - \frac{B^{\frac{1-\alpha}{\alpha } + \frac{1}{2}}}{(1-\alpha) \Gamma\left(\frac{1-\alpha}{\alpha} \lambda n + \frac{1}{2}\right)} \right| \times \\
    &\quad \times   \frac{(1-\alpha) \Gamma\left(\frac{1-\alpha}{\alpha} \lambda n + \frac{1}{2}\right)}{B^{\frac{1-\alpha}{\alpha } + \frac{1}{2}}}\\
    & = \left|\frac{\Gamma(\lambda n) \, \Gamma\left(\frac{1-\alpha}{\alpha} \lambda n + \frac{1}{2}\right)}{\Gamma\left(\frac{\lambda n}{\alpha }\right)} \, \left(\frac{1}{1-\alpha}\right)^{\frac{1-\alpha}{\alpha} \lambda n}  \, \left(\frac{1}{\alpha} \right)^{\lambda n - \frac{1}{2}} \left(2\pi\right)^{-\frac{1}{2}}- 1 \right|. 
\end{align*}
Moreover, for some suitable constants $c_\alpha$, $\tilde{C}_\alpha$ depending only on $\alpha$,
\begin{align*}
    \mathcal{J}_2^{(n)} &= \frac{\Gamma(\lambda n) }{\Gamma\left(\frac{\lambda n}{\alpha }\right)}  \int_0^{+\infty} c_\alpha \, \exp {\left\{ -  B x^\frac{1}{1-\alpha}\right\}} \, x^{\frac{\lambda n}{\alpha} + \frac{2\alpha -3}{2(1-\alpha)}} \, \mathrm{d} x\\
    %
    & = \frac{\Gamma(\lambda n) }{\Gamma\left(\frac{\lambda n}{\alpha }\right)}  \frac{c_{\alpha} (1-\alpha) \Gamma\left(\frac{1-\alpha}{\alpha} \lambda n - \frac{1}{2}\right)}{B^{\frac{1-\alpha}{\alpha }\lambda n - \frac{1}{2}}}\\
    & = \tilde{C}_\alpha \frac{\Gamma(\lambda n) }{\Gamma\left(\frac{\lambda n}{\alpha }\right)}  \frac{\Gamma\left(\frac{1-\alpha}{\alpha} \lambda n - \frac{1}{2}\right)}{\left[(1-\alpha) \alpha^\frac{\alpha}{1-\alpha}\right]^{\frac{1-\alpha}{\alpha }\lambda n} }.
\end{align*}
Set $\lambda n =: t$. Then, we study the asymptotic behavior of
\begin{displaymath}
   g_j(t) : = \frac{\Gamma(t) \, \Gamma\left(\frac{1-\alpha}{\alpha} t + \frac{j}{2}\right) }{\Gamma\left(\frac{t}{\alpha }\right)} 
\end{displaymath}
as $ t\to +\infty$, for $j \in \{-1, 1\}$. By combining Stirling's approximation to the first order, i.e.
\begin{displaymath}
    \Gamma(t) = \sqrt{2\pi (t-1)} \left(\frac{t-1}{e}\right)^{t-1} \left[ 1+ O\left(t^{-1}\right)\right] \quad (t \to +\infty)\ ,
\end{displaymath}
with the fact that
\begin{displaymath}
\left(1 + \frac{a}{t}\right)^t = e^a \left[ 1+ O\left(t^{-1} \right)\right]\quad (t \to +\infty)\ ,
\end{displaymath}
we get, after straightforward computations,
\begin{align*}
   g_j(t) & = \begin{cases}
   (2 \pi e)^{\frac{1}{2}} \left(\frac{1-\alpha}{\alpha}\right)^{\frac{1-\alpha}{\alpha} t }\alpha^{\frac{t}{\alpha} - \frac{1}{2}} \left[ 1- \frac{1}{2t} + O\left(\frac{1}{t^2}\right)\right]^t & \text{if } j = 1\\
   c_\alpha^\star (2 \pi)^{\frac{1}{2}} e^{\frac{3}{2}} \left(\frac{1-\alpha}{\alpha}\right)^{\frac{1-\alpha}{\alpha} t }\alpha^{\frac{t}{\alpha} - \frac{1}{2}} \frac{1}{t} \left[ 1- \frac{3}{2t} + O\left(\frac{1}{t^2}\right)\right]^t & \text{if } j = -1
   \end{cases} \\
   \\
  &  = \begin{cases}
   (2 \pi)^{\frac{1}{2}} \left(\frac{1-\alpha}{\alpha}\right)^{\frac{1-\alpha}{\alpha} t }\alpha^{\frac{t}{\alpha} - \frac{1}{2}} \left[ 1+ O\left(\frac{1}{t}\right)\right] & \text{if } j = 1\\
     O\left(\frac{1}{t}\right) & \text{if } j = -1
   \end{cases}
\end{align*}
where $c_\alpha^\star$ is another constant depending only on $\alpha$.
Substituting the first and second expression, respectively, in the expressions of $\mathcal{J}_1^{(n)} $ and $\mathcal{J}_2^{(n)} $, we obtain the desired results.
\end{proof}
\end{paragraph}

\begin{paragraph}{\underline{Step 2} of the proof of the CLT \eqref{eq:clt_zn}}

\begin{lem}
   \label{lem:lem_3_quater} 
   Let $X_0$ be a real random variable and $\{X_i\}_{i \ge 1}$ and $\{Y_i\}_{i \ge 1}$ be two sequences of i.i.d. positive random variables. We further assume:
   \begin{enumerate}
   \item[a)] $\mathbb{E}\left[X_1^4 \right] < +\infty$ and $\mathbb{E}\left[Y_1^4 \right] < +\infty$;
   \item[b)] $\mu_X : = \mathbb{E}\left[X_1\right]$, $\mu_Y : = \mathbb{E}\left[Y_1\right]$, $\sigma^2_X : = \operatorname{Var}\left(X_1\right)$ and $\sigma^2_Y : = \operatorname{Var}\left(Y_1\right)$;
   \item[c)] $X_0$, $\{X_i\}_{i \ge 1} $ and $\{Y_i\}_{i \ge 1} $ are independent.
   \end{enumerate}
  Then, the following CLT holds:
  \begin{displaymath}
      \frac{1}{\sqrt{n}} \left[ \left(\sum_{i = 0}^n X_i\right)^{1-\alpha}  \left(\sum_{i = 1}^n Y_i\right)^{\alpha} - nm \right]  \stackrel{\text{w}}{\longrightarrow} \mathcal{N}\left(0, V^2\right)
  \end{displaymath}
  as $n \to + \infty$, where
  \begin{displaymath}
      m := \mu_X^{1-\alpha} \mu_Y^\alpha
  \end{displaymath}
  and
  \begin{displaymath}
      V^2 := \mu_X^{2(1-\alpha)} \mu_Y^{2\alpha} \left[\frac{(1-\alpha)^2 \sigma_X^2}{\mu_X^2} + \frac{\alpha^2 \sigma_Y^2}{\mu_Y^2} \right].
  \end{displaymath}
\end{lem}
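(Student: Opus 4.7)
The plan is to recognise this as a standard application of the multivariate delta method to the map $f\colon (0,+\infty)^2 \to (0,+\infty)$ defined by $f(x,y)=x^{1-\alpha}y^{\alpha}$. Writing $\bar{S}_n := \frac{1}{n}\sum_{i=0}^n X_i$ and $\bar{T}_n := \frac{1}{n}\sum_{i=1}^n Y_i$, the identity
\begin{displaymath}
\Big(\sum_{i=0}^n X_i\Big)^{1-\alpha}\Big(\sum_{i=1}^n Y_i\Big)^{\alpha} \;=\; n\,\bar{S}_n^{1-\alpha}\,\bar{T}_n^{\alpha}
\end{displaymath}
reduces the claim to showing that $\sqrt{n}\bigl[f(\bar{S}_n,\bar{T}_n) - f(\mu_X,\mu_Y)\bigr] \stackrel{w}{\longrightarrow} \mathcal{N}(0,V^2)$ as $n\to+\infty$.

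First I would discard the contribution of $X_0$. Setting $\bar{X}_n := \frac{1}{n}\sum_{i=1}^n X_i$, one has $\bar{S}_n = \bar{X}_n + X_0/n$ and hence $\sqrt{n}(\bar{S}_n - \mu_X) = \sqrt{n}(\bar{X}_n - \mu_X) + X_0/\sqrt{n}$, where the last term tends to $0$ in probability since $X_0$ is a fixed a.s.\ finite random variable. Thanks to the independence between $\{X_i\}_{i\ge 1}$ and $\{Y_i\}_{i\ge 1}$ and to the finite-variance hypothesis (assumption (a) is actually stronger than what is needed for this lemma), the classical univariate CLT applied to each sequence yields the joint weak convergence
\begin{displaymath}
\sqrt{n}\bigl(\bar{X}_n - \mu_X,\; \bar{T}_n - \mu_Y\bigr) \stackrel{w}{\longrightarrow} \mathcal{N}\bigl(0,\,\mathrm{diag}(\sigma_X^2,\sigma_Y^2)\bigr),
\end{displaymath}
and Slutsky's theorem then gives the same limit with $\bar{S}_n$ replacing $\bar{X}_n$.

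Next I would apply the multivariate delta method. Since $\mu_X,\mu_Y>0$, the function $f$ is $C^{\infty}$ at $(\mu_X,\mu_Y)$ with
\begin{displaymath}
\nabla f(\mu_X,\mu_Y) = \bigl((1-\alpha)\mu_X^{-\alpha}\mu_Y^{\alpha},\;\alpha\,\mu_X^{1-\alpha}\mu_Y^{\alpha-1}\bigr),
\end{displaymath}
so the delta method delivers the desired Gaussian limit with asymptotic variance
\begin{displaymath}
\bigl((1-\alpha)\mu_X^{-\alpha}\mu_Y^{\alpha}\bigr)^{2}\sigma_X^2 + \bigl(\alpha\,\mu_X^{1-\alpha}\mu_Y^{\alpha-1}\bigr)^{2}\sigma_Y^2 \;=\; \mu_X^{2(1-\alpha)}\mu_Y^{2\alpha}\!\left[\frac{(1-\alpha)^{2}\sigma_X^2}{\mu_X^{2}}+\frac{\alpha^{2}\sigma_Y^2}{\mu_Y^{2}}\right] \;=\; V^2,
\end{displaymath}
which matches the expression in the statement.

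No step of this argument is genuinely delicate. The only points that require attention are that $\mu_X,\mu_Y>0$, which guarantees smoothness of $f$ at the linearisation point (automatic because the $X_i$'s and $Y_i$'s are strictly positive with finite positive means), and the harmless Slutsky correction handling the extra summand $X_0$. The strengthened fourth-moment hypothesis in (a) is not used in this lemma and is presumably required only in the downstream Berry-Esseen-type refinements built upon it.
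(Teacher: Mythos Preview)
Your argument is correct and considerably more elementary than the paper's. You reduce the claim to the multivariate delta method applied to $f(x,y)=x^{1-\alpha}y^{\alpha}$ after a bivariate CLT for the independent sample means, together with a Slutsky correction for $X_0$; every step is standard and the variance computation matches.

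The paper takes a genuinely different route: it invokes Strassen's almost-sure invariance principle (\cite{Str(67)}, Theorem 1.5) to couple the normalised partial sums to two independent Brownian motions with remainders of order $o(n^{1/4+\varepsilon})$, then Taylor-expands $(1+\cdot)^{1-\alpha}$ and $(1+\cdot)^{\alpha}$ around $1$ using the law of the iterated logarithm for the Brownian increments, and finishes with Slutsky. This is precisely where the fourth-moment hypothesis (a) enters, since Strassen's embedding at that rate requires $\mathbb{E}[X_1^4],\mathbb{E}[Y_1^4]<\infty$. Your delta-method proof needs only finite second moments, so your remark that assumption (a) is stronger than necessary for this lemma is exactly right. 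The paper's approach yields almost-sure asymptotic expansions as an intermediate byproduct, but that extra strength is never exploited downstream---the lemma is only used, via Corollary~\ref{cor:cor1_quater}, to feed into the weak-convergence machinery of Lemma~\ref{lem:lem5_quater}. In short, your proof is both simpler and sharper in its hypotheses, at the cost of not producing the almost-sure refinements that the paper obtains but does not use.
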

\begin{proof}
Start considering these trivial identities:
\begin{displaymath}
    \frac{1}{n} \sum_{ i = 0}^n X_i = \frac{X_0}{n} + \mu_X + \frac{\sigma_X}{n}\sum_{i = 1}^{n}\left(\frac{X_i - \mu_X}{\sigma_X}\right)
\end{displaymath}
and
\begin{displaymath}
    \frac{1}{n} \sum_{ i = 1}^n Y_i = \mu_Y + \frac{\sigma_Y}{n}\sum_{i = 1}^{n}\left(\frac{Y_i - \mu_Y}{\sigma_Y}\right).
\end{displaymath}
From \citep[Theorem 1.5]{Str(67)}, there exist two independent Brownian motions $\{B_t\}_{t \ge 0}$ and $\{B'_t\}_{t \ge 0}$ such that
\begin{displaymath}
   \sum_{i = 1}^{n}\left(\frac{X_i - \mu_X}{\sigma_X}\right) \stackrel{a.s.}{=} B_n + R_n \, , \hspace{20 pt} R_n: = \sum_{i = 1}^{n}\left(\frac{X_i - \mu_X}{\sigma_X}\right) - B_n
\end{displaymath}
\begin{displaymath}
   \sum_{i = 1}^{n}\left(\frac{Y_i - \mu_Y}{\sigma_Y}\right)  \stackrel{a.s.}{=} B'_n + R'_n \, , \hspace{20 pt} R'_n: = \sum_{i = 1}^{n}\left(\frac{Y_i - \mu_Y}{\sigma_Y}\right) - B'_n
\end{displaymath}
with  
\begin{displaymath}
    R_n = O \left( (n \log \log n)^{\frac{1}{4}}  (\log n)^{\frac{1}{2}}\right) = o \left(n^{\frac{1}{4} + \varepsilon}\right) \hspace{10 pt} \forall \varepsilon>0
\end{displaymath}
\begin{displaymath}
    R'_n = O \left( (n \log \log n)^{\frac{1}{4}}  (\log n)^{\frac{1}{2}}\right) = o \left(n^{\frac{1}{4} + \varepsilon}\right) \hspace{10 pt} \forall \varepsilon>0.
\end{displaymath}
Recalling that, from the law of iterated logarithm
\begin{displaymath}
    \frac{|B_n|}{n} = O\left(\sqrt{\frac{\log \log n}{n}}\right) \hspace{20pt} \text{and} \hspace{20pt}  \frac{|B'_n|}{n} = O\left(\sqrt{\frac{\log \log n}{n}}\right),
\end{displaymath}
one gets
\begin{align*}
    \left(\frac{1}{n} \sum_{ i = 0}^n X_i \right)^{1-\alpha} & \stackrel{a.s.}{=} \mu_X^{1-\alpha} \left[1 + \frac{X_0}{n\mu_X}  + \frac{\sigma_X}{n \mu_X }B_n  + \frac{\sigma_X}{n\mu_X }R_n\right]^{1-\alpha}\\
    &  = \mu_X^{1-\alpha} \left[1 +  \frac{(1-\alpha) \, \sigma_X}{n \mu_X }B_n  + o \left(n^{-\frac{3}{4} + \varepsilon}\right)\right]
\end{align*}
and 
\begin{align*}
    \left(\frac{1}{n} \sum_{ i = 1}^n Y_i\right)^\alpha & \stackrel{a.s.}{=}  \mu_Y^\alpha \left[1 + \frac{\sigma_Y}{n\mu_Y}B'_n  + \frac{\sigma_Y}{n\mu_Y}R'_n\right]^\alpha \\
    & =  \mu_Y^\alpha \left[1 + \frac{\alpha \, \sigma_Y}{n\mu_Y} B'_n  +  o \left(n^{-\frac{3}{4} + \varepsilon}\right) \right]
\end{align*}
for any $\varepsilon >0$. This implies
\begin{multline*}
     \sqrt{n}  \left[ \left(\frac{1}{n}\sum_{i = 0}^n X_i\right)^{1-\alpha}  \left(\frac{1}{n}\sum_{i = 1}^n Y_i\right)^{\alpha} - \mu_X^{1-\alpha} \mu_Y^\alpha \right] \\ \stackrel{a.s.}{=} \mu_X^{1-\alpha} \mu_Y^\alpha \left[\frac{(1-\alpha) \, \sigma_X}{ \mu_X }\frac{B_n}{\sqrt{n}}  + \frac{\alpha \, \sigma_Y}{\mu_Y} \frac{B'_n}{\sqrt{n}} + o \left(n^{-\frac{1}{2} + \varepsilon}\right)\right].
\end{multline*}
Observe that, by elementary properties of the two independent Brownian motions $\{B_t\}_{t \ge 0}$ and $\{B'_t\}_{t \ge 0}$ ,
\begin{displaymath}
  \mu_X^{1-\alpha} \mu_Y^\alpha\left[  \frac{(1-\alpha) \, \sigma_X}{ \mu_X }\frac{B_n}{\sqrt{n}}  + \frac{\alpha \, \sigma_Y}{\mu_Y} \frac{B'_n}{\sqrt{n}} \right]  \stackrel{d}{=} \mathcal{N}\left(0, V^2\right).
\end{displaymath}
Finally, apply Slutsky's theorem to conclude the proof.
\end{proof}

\begin{cor}
\label{cor:cor1_quater}
Let  $G_{\rho n + \tau,  B} \sim \operatorname{Gamma}(\rho n + \tau,  B)$ and $G_{(\lambda+1)n, 1} \sim \operatorname{Gamma}((\lambda+1)n, 1)$ be two independent random variables. Then, as $n \to + \infty$, 
 \begin{displaymath}
       \frac{1}{\sqrt{n} }\left[ G_{\rho n + \tau,  B}
        ^{1-\alpha}  G_{(\lambda+1)n, 1}^\alpha- nm \right] \stackrel{\text{w}}{\longrightarrow} \mathcal{N}(0, V^2),
   \end{displaymath}
where 
\begin{displaymath}
    m := \left(\frac{\rho}{B}\right)^{1-\alpha} (\lambda+1)^\alpha
\end{displaymath}
and
\begin{displaymath}
    V^2 := \left(\frac{\rho}{B}\right)^{2(1-\alpha)} (\lambda+1)^{2\alpha} \left[ \frac{(1-\alpha)^2}{\rho} + \frac{\alpha^2}{\lambda+1}\right].
\end{displaymath}
\end{cor}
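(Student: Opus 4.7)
\noindent\textbf{Proof plan for Corollary \ref{cor:cor1_quater}.} The approach is to recognize the statement as a direct specialization of Lemma \ref{lem:lem_3_quater} under a suitable decomposition of the two Gamma variables into sums of i.i.d.\ summands, using Gamma additivity with a common rate parameter.

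First, I would exploit the fact that if $G^{(1)},\ldots,G^{(n)}$ are i.i.d.\ $\operatorname{Gamma}(a,b)$ and independent of $G^{(0)}\sim\operatorname{Gamma}(c,b)$, then $G^{(0)}+\sum_{i=1}^{n}G^{(i)}\sim\operatorname{Gamma}(na+c,b)$. Applying this with $(a,b,c)=(\rho,B,\tau)$ one realizes
\begin{displaymath}
G_{\rho n+\tau,B}\stackrel{d}{=}X_0+\sum_{i=1}^{n}X_i,\qquad X_0\sim\operatorname{Gamma}(\tau,B),\ X_i\stackrel{\text{i.i.d.}}{\sim}\operatorname{Gamma}(\rho,B),
\end{displaymath}
and with $(a,b,c)=(\lambda+1,1,0)$ one realizes
\begin{displaymath}
G_{(\lambda+1)n,1}\stackrel{d}{=}\sum_{i=1}^{n}Y_i,\qquad Y_i\stackrel{\text{i.i.d.}}{\sim}\operatorname{Gamma}(\lambda+1,1),
\end{displaymath}
where all $X_0,\{X_i\}_{i\geq 1},\{Y_j\}_{j\geq 1}$ can be taken mutually independent, matching hypothesis (c) of Lemma \ref{lem:lem_3_quater}. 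Hypothesis (a) is trivial since Gamma laws have all finite moments, and the relevant first two moments (hypothesis (b)) are
\begin{displaymath}
\mu_X=\frac{\rho}{B},\qquad \sigma_X^2=\frac{\rho}{B^2},\qquad \mu_Y=\lambda+1,\qquad \sigma_Y^2=\lambda+1.
\end{displaymath}

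Next, I would invoke Lemma \ref{lem:lem_3_quater} directly to obtain
\begin{displaymath}
\frac{1}{\sqrt{n}}\left[G_{\rho n+\tau,B}^{1-\alpha}\,G_{(\lambda+1)n,1}^{\alpha}-nm\right]\stackrel{w}{\longrightarrow}\mathcal{N}(0,V^2),
\end{displaymath}
with $m=\mu_X^{1-\alpha}\mu_Y^{\alpha}$ and $V^2=\mu_X^{2(1-\alpha)}\mu_Y^{2\alpha}\bigl[(1-\alpha)^2\sigma_X^2/\mu_X^2+\alpha^2\sigma_Y^2/\mu_Y^2\bigr]$. Finally, a short algebraic check substitutes the moments above: $m=(\rho/B)^{1-\alpha}(\lambda+1)^{\alpha}$, and since $\sigma_X^2/\mu_X^2=1/\rho$ and $\sigma_Y^2/\mu_Y^2=1/(\lambda+1)$, the variance simplifies to $V^2=(\rho/B)^{2(1-\alpha)}(\lambda+1)^{2\alpha}\bigl[(1-\alpha)^2/\rho+\alpha^2/(\lambda+1)\bigr]$, matching the stated formulas.

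There is essentially no obstacle here: the only subtle point is the Gamma-additivity step (which is where the ``extra'' independent $X_0\sim\operatorname{Gamma}(\tau,B)$ summand is absorbed, justifying why $X_0$ is allowed to be present but does not enter $m$ or $V^2$ in Lemma \ref{lem:lem_3_quater}). Once the i.i.d.\ decomposition is in place, the corollary follows immediately from the lemma and straightforward arithmetic.
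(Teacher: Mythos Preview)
Your proposal is correct and follows essentially the same approach as the paper: decompose each Gamma variable via additivity into $X_0+\sum_{i=1}^n X_i$ and $\sum_{i=1}^n Y_i$ with the same choices of $X_0\sim\operatorname{Gamma}(\tau,B)$, $X_i\sim\operatorname{Gamma}(\rho,B)$, $Y_i\sim\operatorname{Gamma}(\lambda+1,1)$, then apply Lemma \ref{lem:lem_3_quater} and check the moment formulas. Your write-up actually spells out the verification of $m$ and $V^2$ more explicitly than the paper, which simply says it ``can be checked via direct computation.''
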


\begin{proof}
    Introduce a random variable $X_0$ and two sequences of i.i.d. random variables $\{X_i\}_{i \ge 1}$ and $\{Y_i\}_{i \ge 1}$ such that: 
   \begin{itemize}
   \item[i)] $X_1 \sim \operatorname{Gamma}(\rho, B)$, $Y_1 \sim \operatorname{Gamma}(\lambda +1, 1)$ and  $X_0 \sim \operatorname{Gamma}(\tau, B)$;
   \item[ii)] $X_0$, $\{X_i\}_{i \ge 1} $ and $\{Y_i\}_{i \ge 1} $ are independent.
   \end{itemize}
   Since $G_{\rho n + \tau,  B} \stackrel{d}{=} \sum_{i = 0}^n X_i$ and $G_{(\lambda+1)n, 1}  \stackrel{d}{=} \sum_{i = 1}^n Y_i$, the statement immediately follows from Lemma \ref{lem:lem_3_quater}. The expressions of $m$ and $V^2$ can be checked via direct computation.
\end{proof}
\end{paragraph}

\begin{paragraph}{\underline{Step 3} of the proof of the CLT \eqref{eq:clt_zn}}
The next statement makes use of the so-called Ky-Fan distance between random variables, namely
    \begin{displaymath}
        d_{\text{KF}}\left(X, Y \right) := \inf \left\{ \varepsilon >0 \, : \, P \left(\left|X-Y \right|> \varepsilon\right) \le \varepsilon \right\}.
    \end{displaymath}

\begin{lem}
    \label{lem:lem5_quater}
    On a probability space $(\Omega, \mathcal{F}, P)$, consider a real random variable $\eta$, and two sequences of real random variables $\{\eta_n\}_{n \ge 1}$, $\{(\xi_n, \,  \xi'_n)\}_{n \ge 1}$ such that:
    \begin{itemize}
        \item[i)] $\eta_n \stackrel{w}{\longrightarrow} \eta$, as $n \to +\infty$;
    \item[ii)] 
    \begin{equation}
    \label{KF}
        d_{\text{KF}}(\xi_n, \xi_n') = o\left(\frac{1}{\sqrt{n}}\right);
    \end{equation}
    \item[iii)]  there exist $c \in \mathbb{R}$ and a real random variable $L$ such that, as $n \to +\infty$
     \begin{equation}
     \label{item: iii_lem5_quater}  
    \sqrt{n} \left(\xi_n' \eta_n - c\right) \stackrel{w}{\longrightarrow} L.
     \end{equation}
    \end{itemize}
    Then, as $n \to +\infty$, 
    \begin{displaymath}
        \sqrt{n} \left(\xi_n \eta_n - c\right) \stackrel{\text{w}}{\longrightarrow} L,
    \end{displaymath}
with the same $c$ and $L$ as in \eqref{item: iii_lem5_quater}.
\end{lem}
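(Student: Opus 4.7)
The plan is to decompose the target quantity as
\begin{displaymath}
\sqrt{n}(\xi_n \eta_n - c) = \sqrt{n}(\xi_n' \eta_n - c) + \sqrt{n}(\xi_n - \xi_n') \eta_n,
\end{displaymath}
and then to argue, via Slutsky's theorem, that the second summand tends to zero in probability while the first converges weakly to $L$ directly by hypothesis (iii).

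First I would exploit the Ky-Fan condition (ii) to deduce that $\sqrt{n}(\xi_n - \xi_n') \stackrel{p}{\longrightarrow} 0$ as $n \to +\infty$. By the very definition of $d_{\text{KF}}$, for every $\varepsilon > d_{\text{KF}}(\xi_n, \xi_n')$ one has $P(|\xi_n - \xi_n'| > \varepsilon) \le \varepsilon$. I would pick a sequence $\varepsilon_n \downarrow d_{\text{KF}}(\xi_n, \xi_n')$ with $\sqrt{n}\varepsilon_n \to 0$, whose existence is guaranteed by \eqref{KF}. Then, for every $\delta > 0$ and every $n$ large enough so that $\sqrt{n}\varepsilon_n < \delta$, the monotonicity of $P$ yields
\begin{displaymath}
P(\sqrt{n}|\xi_n - \xi_n'| > \delta) \le P(|\xi_n - \xi_n'| > \varepsilon_n) \le \varepsilon_n \longrightarrow 0.
\end{displaymath}

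Next, I would handle the multiplicative factor $\eta_n$. Hypothesis (i) yields weak convergence $\eta_n \stackrel{w}{\longrightarrow} \eta$, which by Prokhorov's theorem entails tightness of $\{\eta_n\}_{n \ge 1}$: for every $\delta > 0$ there exists $M = M(\delta) > 0$ such that $\sup_{n \ge 1} P(|\eta_n| > M) < \delta$. For any $\varepsilon > 0$, I would then combine this with the previous step by writing
\begin{displaymath}
P(\sqrt{n}|\xi_n - \xi_n'|\cdot|\eta_n| > \varepsilon) \le P(|\eta_n| > M) + P(\sqrt{n}|\xi_n - \xi_n'| > \varepsilon/M),
\end{displaymath}
taking $\limsup$ as $n \to +\infty$ (which is bounded by $\delta$, since the second summand vanishes by the first step), and finally letting $\delta \to 0$ to conclude $\sqrt{n}(\xi_n - \xi_n')\eta_n \stackrel{p}{\longrightarrow} 0$.

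Applying Slutsky's theorem to the opening decomposition then delivers $\sqrt{n}(\xi_n \eta_n - c) \stackrel{\text{w}}{\longrightarrow} L$, as required. The only mildly delicate step is the first one, where one has to translate the Ky-Fan bound at rate $o(1/\sqrt{n})$ into convergence in probability of the $\sqrt{n}$-scaled difference; beyond that, the argument is a textbook Slutsky-plus-tightness routine that does not rely on any structural assumption on $\xi_n$, $\xi_n'$ or $\eta_n$ beyond those stated.
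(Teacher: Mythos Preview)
Your proof is correct and follows essentially the same route as the paper's: the identical decomposition, followed by showing $\sqrt{n}(\xi_n-\xi_n')\eta_n \stackrel{p}{\to} 0$ and concluding via Slutsky. The only cosmetic difference is that the paper handles the two substeps more tersely---it uses the scaling inequality $d_{\text{KF}}(\sqrt{n}\xi_n,\sqrt{n}\xi_n') \le \sqrt{n}\, d_{\text{KF}}(\xi_n,\xi_n')$ to get $\sqrt{n}(\xi_n-\xi_n')\stackrel{p}{\to}0$, and then invokes Slutsky once more (with $\eta_n\stackrel{w}{\to}\eta$ and the factor going to $0$ in probability) rather than unpacking the tightness argument explicitly---but your more hands-on version is just the same reasoning written out in full.
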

\begin{proof}
Write
\begin{displaymath}
    \sqrt{n} \left(\xi_n \eta_n - c\right) = \sqrt{n} \left(\xi'_n \eta_n - c\right) + \sqrt{n} \eta_n \left(\xi_n - \xi'_n\right).
\end{displaymath}
By means of Slutsky's theorem, the proof is concluded if we show
\begin{displaymath}
    \sqrt{n}  \left(\xi_n - \xi'_n\right) \stackrel{p}{\longrightarrow}0 
\end{displaymath}
 as $n \to +\infty$. Since the Ky-Fan distance metrizes convergence in probability, it is enough to note that
 \begin{displaymath}
    d_{\text{KF}}\left(\sqrt{n} \left(\xi_n - \xi'_n\right), \mathbf{0}\right)  = d_{\text{KF}}\left(\sqrt{n}\xi_n,  \sqrt{n} \xi'_n\right) \le \sqrt{n} d_{\text{KF}}\left(\xi_n, \xi'_n\right) \stackrel{\eqref{KF}}{\longrightarrow} 0
\end{displaymath}
 where $\mathbf{0}$ denotes the degenerate random variable equal to 0 a.s.. 
\end{proof}

\begin{proof}[Proof of the CLT \eqref{eq:clt_zn}]
Set $\rho = \lambda \frac{1-\alpha}{\alpha}$, $\tau = \frac{1}{2}$, $B = (1-\alpha) \, \alpha^{\frac{\alpha}{1-\alpha}}$ as in Lemma \ref{lem:lem4_quater} and introduce an auxiliary random variable $G_{\rho n + \tau, B} \sim \operatorname{Gamma}\left( \rho n + \tau, B \right)$, independent of $G_{(\lambda+1)n, 1} $. Then, the assumptions of Corollary \ref{cor:cor1_quater} are met, with $m$ and $V^2$ as follows:
\begin{displaymath}
    m = \left(\frac{\rho}{B}\right)^{1-\alpha} (\lambda+1)^\alpha = \frac{\lambda}{\alpha}  \left(\frac{\lambda+1}{\lambda}\right)^\alpha = z_0
\end{displaymath}
and
\begin{align*}
    V^2 & = \left(\frac{\rho}{B}\right)^{2(1-\alpha)} (\lambda+1)^{2\alpha} \left[ \frac{(1-\alpha)^2}{\rho} + \frac{\alpha^2}{\lambda+1}\right] \\
    & = \frac{\lambda^2}{\alpha^2} \left(\frac{\lambda+1}{\lambda}\right)^{2\alpha} \left[\frac{(1-\alpha)\alpha}{\lambda} + \frac{\alpha^2}{\lambda+1}\right]  = \Sigma^2.
\end{align*}
Corollary \ref{cor:cor1_quater} then entails that,  as $n \to + \infty$
 \begin{displaymath}
       \frac{1}{\sqrt{n} }\left[ G_{\rho n + \tau,  B}
        ^{1-\alpha}  G_{(\lambda+1)n, 1}^\alpha- n z_0 \right] \stackrel{w}{\longrightarrow} \mathcal{N}(0, \Sigma^2).
   \end{displaymath}
Now, let $d_{\text{Prok}}$ denote the Prokhorov distance between distributions, defined as follows. 
Given $B \subseteq \mathbb{R}$ and $\varepsilon>0$, let $B^\varepsilon : =\{x \in \mathbb{R} \, : \, \inf_{y \in B} |x-y|^2 \le \varepsilon \}$; then
\begin{displaymath}
        d_{\text{Prok}}\left(\mu, \nu \right) :=  \inf \left\{\varepsilon >0 \, : \, \mu(B) \le \nu\left(B^\varepsilon\right) + \varepsilon  , \,  \forall B \in \mathscr{B}(\mathbb{R})\right\}.
    \end{displaymath}
Thanks to a well-known bound between probability metrics---see \citep[Inequality (4.13)]{Hub(81)}---Lemma \ref{lem:lem4_quater} implies that
\begin{displaymath}
        d_{\text{Prok}}\left(\mu_{S_{\lambda n, \alpha}}, \,  \mu_{G_{\rho n + \tau, B}
        ^{1-\alpha}}\right) \le d_{\text{TV}}\left(\mu_{S_{\lambda n, \alpha}}, \,  \mu_{G_{\rho n + \tau, B}
        ^{1-\alpha}}\right) =O\left(\frac{1}{n} \right).
    \end{displaymath}
Now, a theorem of Strassen---see  \citep[Corollary 11.6.4]{Dud(02)}---allows, in turn, to find a suitable coupling between the random variables  $S_{\lambda n, \alpha}$ and $G_{\rho n + \tau, B}^{1-\alpha}$ such that
\begin{displaymath}
         d_{\text{KF}}\left(S_{\lambda n, \alpha}, \,  G_{\rho n + \tau, B}^{1-\alpha}\right)  =O\left(\frac{1}{n} \right).
    \end{displaymath}
To conclude, it is enough to invoke Lemma \ref{lem:lem5_quater} with the choices $\eta_n =  G_{(\lambda+1)n, 1}^\alpha$, $\xi'_n = G_{\rho n + \tau,  B}
        ^{1-\alpha}$, $\xi_n = S_{\lambda n, \alpha}$, $c = z_0$ and $L = \mathcal{N}\left(0, \Sigma^2\right)$.
\end{proof}   
\end{paragraph}   


\subsubsection{On a property of uniform integrability of $Z_n$} 
We now prove a property of uniform integrability of $Z_n$ which, in combination with convergence in distribution, guarantees convergence of the moments. 
Hence, from the CLT we will deduce convergence of the moments of $(Z_n - n \, z_0)/(\sqrt{n \Sigma^2})$ to those of the standard Gaussian.
\begin{lem} Under the assumption of Proposition \ref{prop:clt_Z}, it holds
    \label{lem:UI_Zn}
    \begin{displaymath}
        \sup_{n \in \mathbb{N}} \mathbb{E}\left[\left(\frac{Z_n - nz_0}{\sqrt{n}}\right)^4\right] < + \infty.
    \end{displaymath}
\end{lem}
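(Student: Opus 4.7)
The plan is to prove the equivalent estimate $\mathbb{E}[(Z_n - nz_0)^4] = O(n^2)$, from which the desired uniform bound follows by dividing by $n^2$. Since $\mathbb{E}[Z_n] = nz_0 + O(1)$ by \eqref{mom_Zn1}, writing $Z_n - nz_0 = (Z_n - \mathbb{E}[Z_n]) + (\mathbb{E}[Z_n] - nz_0)$ and applying the elementary inequality $(a+b)^4 \le 8(a^4+b^4)$ reduces the task to proving $\mathbb{E}[(Z_n - \mathbb{E}[Z_n])^4] = O(n^2)$.

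Set $U := S_{\alpha,\lambda n}$ and $V := G_{(\lambda+1)n,1}^{\alpha}$, so that $Z_n = UV$ with $U$ and $V$ independent, and denote $\mu_U := \mathbb{E}[U]$, $\mu_V := \mathbb{E}[V]$. The algebraic identity
\[
UV - \mu_U \mu_V = U(V - \mu_V) + \mu_V(U - \mu_U),
\]
combined once more with $(a+b)^4 \le 8(a^4+b^4)$ and with independence of $U$ and $V$, yields
\[
\mathbb{E}[(Z_n - \mathbb{E}[Z_n])^4] \le 8\,\mathbb{E}[U^4]\,\mathbb{E}[(V-\mu_V)^4] + 8\,\mu_V^4\,\mathbb{E}[(U-\mu_U)^4].
\]

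It remains to estimate each of the four factors via the exact moment formulas. The density of $S_{\alpha,\theta}$ gives $\mathbb{E}[U^k] = \Gamma(\lambda n/\alpha + k)\Gamma(\lambda n)/[\Gamma(\lambda n /\alpha)\Gamma(\lambda n + k\alpha)]$, while the standard Gamma formula gives $\mathbb{E}[V^k] = \Gamma((\lambda+1)n + k\alpha)/\Gamma((\lambda+1)n)$. Applying the Tricomi--Erd\'elyi expansion \citep[Equation 1]{TE(51)} to each Gamma ratio produces $\mathbb{E}[U^k] = O(n^{k(1-\alpha)})$ and $\mathbb{E}[V^k] = O(n^{k\alpha})$; in particular $\mathbb{E}[U^4] = O(n^{4(1-\alpha)})$ and $\mu_V^4 = O(n^{4\alpha})$. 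Expanding the two fourth central moments as linear combinations of $\mathbb{E}[U^j]$ and $\mathbb{E}[V^j]$ for $j=0,\ldots,4$ and retaining sufficiently many orders in the corresponding Tricomi--Erd\'elyi expansions, one verifies that the leading and next-to-leading coefficients cancel via a binomial identity, leading to
\[
\mathbb{E}[(U-\mu_U)^4] = O(n^{2-4\alpha}), \qquad \mathbb{E}[(V-\mu_V)^4] = O(n^{4\alpha - 2}).
\]
Both terms on the right-hand side of the previous display are then $O(n^2)$, completing the proof.

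The main obstacle is precisely the verification of these two cancellations. The naive bounds $|U-\mu_U|^4 \le 8(U^4 + \mu_U^4)$ and $|V-\mu_V|^4 \le 8(V^4 + \mu_V^4)$ only give $O(n^{4(1-\alpha)})$ and $O(n^{4\alpha})$ respectively, which would leave us with $O(n^4)$ overall. Obtaining the sharper $O(n^{2-4\alpha})$ and $O(n^{4\alpha-2})$ requires precisely the same fourth-moment cancellation argument carried out in Appendix \ref{app:mom4Kn} for $K_n$: one explicitly checks that the coefficients of $n^{4(1-\alpha)}$ and $n^{4(1-\alpha)-1}$, respectively of $n^{4\alpha}$ and $n^{4\alpha-1}$, in the corresponding combinatorial sum vanish identically, leaving only terms of the claimed orders.
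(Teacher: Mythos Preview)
Your proof is correct. The route differs from the paper's, and the comparison is worth recording.

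The paper expands $(Z_n - nz_0)^4$ directly via the binomial formula, uses the closed form
\[
\mathbb{E}[Z_n^k] = \frac{\Gamma(\lambda n)}{\Gamma(\lambda n/\alpha)}\,\frac{\Gamma(\lambda n/\alpha + k)}{\Gamma(\lambda n + k\alpha)}\,\frac{\Gamma((\lambda+1)n + k\alpha)}{\Gamma((\lambda+1)n)},
\]
applies Stirling to the whole product at once, and checks that the $n^4$ and $n^3$ coefficients vanish through the single identity $\sum_{k=0}^4 \binom{4}{k}(-1)^{4-k}k^i = 0$ for $i=0,1,2$. One expansion, one cancellation.

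Your approach instead factors $Z_n = UV$ and uses the algebraic identity $UV - \mu_U\mu_V = U(V-\mu_V) + \mu_V(U-\mu_U)$ to split the problem into two separate fourth-central-moment estimates, one for $U = S_{\alpha,\lambda n}$ and one for $V = G_{(\lambda+1)n,1}^{\alpha}$. Each of these involves a \emph{single} Gamma ratio rather than a product of three, so the Tricomi--Erd\'elyi bookkeeping is lighter per step, at the cost of running the cancellation argument twice and incurring the crude constant $8$ from $(a+b)^4 \le 8(a^4 + b^4)$. The heuristic $V - \mu_V \approx \alpha(An)^{\alpha-1}(G - An)$, with $A = \lambda+1$, makes the order $n^{4\alpha-2}$ for $\mathbb{E}[(V-\mu_V)^4]$ transparent, and similarly for $U$. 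Both approaches ultimately rest on the same binomial identity; the paper's is shorter, while yours makes the product structure do some of the work and is arguably easier to generalize to other independent-product random variables.
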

\begin{proof}
    We show that as $n \to +\infty$
    \begin{displaymath}
        \mathbb{E}\left[\left(Z_n - nz_0\right)^4\right]  = O\left(n^2\right).
    \end{displaymath}
In fact, 
\begin{align*}
    \mathbb{E}\left[\left(Z_n - nz_0\right)^4\right] & = \sum_{k = 0}^4 \binom{4}{k}\,  \mathbb{E} [Z_n^k]\,  (nz_0)^{4-k}\\
    & = \sum_{k = 0}^4 \binom{4}{k} \, \frac{\Gamma(\lambda n)}{\Gamma\left(\frac{\lambda n}{\alpha}\right)} \, \frac{\Gamma \left(k+ \frac{\lambda n}{\alpha}\right)}{\Gamma(k \alpha + \lambda n)} \, \frac{\Gamma \left((\lambda +1 ) n + k\alpha\right)}{\Gamma\left((\lambda +1 )n \right)} \times \,\\
    &\quad \times (-1)^{4-k} \, n^{4-k}\, \left(\frac{\lambda n}{\alpha}\right)^{4-k}\, \left(\frac{\lambda +1}{\lambda}\right)^{4-k}.
\end{align*}
First-order Stirling's approximation, combined with some simple computations, yields
\begin{align*}
     \mathbb{E}\left[\left(Z_n - nz_0\right)^4\right]  & = n^4 \, \left(\frac{\lambda}{\alpha}\right)^{4}\, \, \left(\frac{\lambda +1}{\lambda}\right)^{4}\sum_{k = 0}^4 \binom{4}{k} \,  (-1)^{4-k}\times  \\
    &\quad\times \left[1- \frac{k\alpha (k\alpha -1)}{2\lambda n} + \frac{k(k-1)\alpha}{2\lambda n} + \frac{k\alpha (k\alpha -1)}{2 (\lambda+1) n} + O\left(\frac{1}{n^2}\right) \right] \\
    & = c_0 \, n^4 \sum_{k = 0}^4 \binom{4}{k} \,  (-1)^{4-k} +  n^3 \sum_{k = 0}^4 \binom{4}{k} (-1)^{4-k} \left(c_1 \, k  + c_2 \, k^2\right) \\
    &\quad+ O(n^2)
\end{align*}
for some constants $c_0, \, c_1 \, , c_2$ depending only on $\alpha $ and $\lambda$. Now, it can be easily seen that 
\begin{displaymath}
     \sum_{k = 0}^4 \binom{4}{k} \,  (-1)^{4-k}\, k^i = 0
\end{displaymath}
for $i = 0, 1, 2$, which concludes the proof.
\end{proof}

By combining Lemma \ref{lem:UI_Zn} and Proposition \ref{prop:clt_Z} we obtain the following statement.
\begin{prp}
\label{lem:lemma6_quater(formula momenti UI)}
    Let $\Psi: (0, +\infty) \to \mathbb{R}$ be such that $\Psi \in C^3(0, +\infty)$ with bounded derivatives. Then
    \begin{displaymath}
        \mathbb{E} \left[ \Psi\left(\frac{Z_n}{n}\right)\right] = \, \Psi(z_0) + \frac{1}{2n} \left(\frac{\lambda+1}{\lambda}\right)^\alpha \frac{1-\alpha}{\lambda +1} \Psi'(z_0) + \frac{1}{2n} \Sigma^2\,  \Psi''(z_0) + o\left(\frac{1}{n}\right).
    \end{displaymath}
\end{prp}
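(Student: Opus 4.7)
The plan is to perform a Taylor expansion of $\Psi$ around $z_0$ up to third order, take expectations, and match each term using refined asymptotic expansions of the first two moments of $Z_n/n$ together with the uniform integrability property of Lemma \ref{lem:UI_Zn}. Specifically, since $\Psi\in C^3(0,+\infty)$ with bounded third derivative, for every $z>0$ one has
\begin{displaymath}
\Psi(z)=\Psi(z_0)+\Psi'(z_0)(z-z_0)+\tfrac{1}{2}\Psi''(z_0)(z-z_0)^{2}+R(z),\qquad |R(z)|\le \tfrac{\|\Psi'''\|_\infty}{6}|z-z_0|^{3}.
\end{displaymath}
Evaluating at $z=Z_n/n$ and taking expectations reduces the problem to three tasks: (a) a second-order expansion of $\mathbb{E}[Z_n/n]-z_0$; (b) a leading-order expansion of $\mathbb{E}[(Z_n/n-z_0)^{2}]$; and (c) an $o(1/n)$ bound for $\mathbb{E}[|Z_n/n-z_0|^{3}]$.

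For (a), I would refine the asymptotic expansion already carried out in the proof of \eqref{mom_Zn1}, using the next order of the ratio asymptotics $\Gamma(x+a)/\Gamma(x)=x^{a}\bigl(1+\tfrac{a(a-1)}{2x}+O(x^{-2})\bigr)$ applied to the exact formula
\begin{displaymath}
\mathbb{E}[Z_n]=\frac{\lambda n}{\alpha}\,\frac{\Gamma(\lambda n)}{\Gamma(\lambda n+\alpha)}\,\frac{\Gamma((\lambda+1)n+\alpha)}{\Gamma((\lambda+1)n)}.
\end{displaymath}
Collecting the two $1/n$ corrections produces
\begin{displaymath}
\mathbb{E}[Z_n]=nz_0+\tfrac{\lambda n}{\alpha}\Bigl(\tfrac{\lambda+1}{\lambda}\Bigr)^{\alpha}\tfrac{\alpha(\alpha-1)}{2n}\Bigl(\tfrac{1}{\lambda+1}-\tfrac{1}{\lambda}\Bigr)+O(n^{-1})=nz_0+\tfrac{1}{2}\Bigl(\tfrac{\lambda+1}{\lambda}\Bigr)^{\alpha}\tfrac{1-\alpha}{\lambda+1}+O(n^{-1}),
\end{displaymath}
which is exactly the coefficient of $\Psi'(z_0)$ claimed in the statement. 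For (b), I would observe that $\mathbb{E}[(Z_n/n-z_0)^{2}]=\mathrm{Var}(Z_n/n)+(\mathbb{E}[Z_n/n]-z_0)^{2}$, and that \eqref{mom_Zn2} yields $\mathrm{Var}(Z_n/n)=\Sigma^{2}/n+O(1/n^{2})$, while step (a) shows $(\mathbb{E}[Z_n/n]-z_0)^{2}=O(1/n^{2})=o(1/n)$; hence the quadratic term in the Taylor expansion contributes $\tfrac{\Sigma^{2}}{2n}\Psi''(z_0)+o(1/n)$.

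For (c), I would use the boundedness of $\Psi'''$ and Hölder's (or Jensen's) inequality to write
\begin{displaymath}
\bigl|\mathbb{E}[R(Z_n/n)]\bigr|\le \tfrac{\|\Psi'''\|_\infty}{6}\,\mathbb{E}\bigl[|Z_n/n-z_0|^{3}\bigr]\le \tfrac{\|\Psi'''\|_\infty}{6}\,\bigl(\mathbb{E}[(Z_n/n-z_0)^{4}]\bigr)^{3/4}.
\end{displaymath}
Lemma \ref{lem:UI_Zn} gives $\mathbb{E}[(Z_n/n-z_0)^{4}]=n^{-2}\mathbb{E}[((Z_n-nz_0)/\sqrt{n})^{4}]=O(n^{-2})$, so the cubic remainder is $O(n^{-3/2})=o(1/n)$, as needed. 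Assembling (a)--(c) produces the claimed expansion.

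The only mildly delicate step is (a): one must be careful to track the $O(1/n)$ corrections in Stirling's ratio for both Gamma ratios and to verify that no other $O(1)$ contributions get lost (in particular, the cross term in $\tfrac{\lambda n}{\alpha}\cdot\tfrac{1}{n}$). All other steps reduce to direct algebra and to invoking the already available moment bound of Lemma \ref{lem:UI_Zn}.
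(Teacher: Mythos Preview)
Your proposal is correct and follows essentially the same strategy as the paper: Taylor expand $\Psi$, compute the first two moments of $Z_n/n$ to the required precision, and bound the cubic remainder via Lemma~\ref{lem:UI_Zn}. Two minor differences are worth noting. First, the paper centers the Taylor expansion at $e_n:=\mathbb{E}[Z_n/n]$ rather than at $z_0$, which makes the linear term vanish upon taking expectations and then re-expands $\Psi(e_n)$ and $\Psi''(e_n)$ around $z_0$; your direct expansion around $z_0$ is equivalent and arguably cleaner. Second, for the remainder the paper invokes the CLT~\eqref{eq:clt_zn} together with uniform integrability to show $\mathbb{E}\bigl[|(Z_n-nz_0)/\sqrt{n}|^3\bigr]$ converges, whereas you bypass this by applying H\"older's inequality directly to the fourth-moment bound of Lemma~\ref{lem:UI_Zn}, obtaining $O(n^{-3/2})$ outright. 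Your route is shorter and avoids the appeal to weak convergence.
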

\begin{proof}
    Let $e_n: = \mathbb{E} \left[ \Psi\left(\frac{Z_n}{n}\right)\right]$. By means of \citep[Equation 1]{TE(51)}, we write
    \begin{align*}
        e_n &= \frac{1}{n} \left[n\,  \frac{\lambda }{\alpha} \left(\frac{\lambda +1}{\lambda}\right)^\alpha \left(1+\frac{\alpha(1-\alpha)}{2n \lambda (\lambda +1)}\right) \right] + o\left(\frac{1}{n}\right) \\
        &= z_0 + \frac{1}{2n} \left(\frac{\lambda +1}{\lambda} \right)^\alpha \frac{1-\alpha}{\lambda +1} + o\left(\frac{1}{n} \right).
    \end{align*}
Taylor's formula with Bernstein's integral remainder \citep[Appendix B]{Dud(02)} shows that
\begin{align*}
\Psi \left(\frac{Z_n}{n}\right) - \Psi(e_n) &= \Psi'(e_n) \left(\frac{Z_n}{n} -e_n\right) + \frac{1}{2} \Psi''(e_n) \left(\frac{Z_n}{n} -e_n\right)^2 \\
&\quad + \frac{1}{2}\left(\frac{Z_n}{n} -e_n\right)^3 \int_0^1 \Psi''' \left(e_n + s \left(\frac{Z_n}{n} -e_n\right)\right) \, (1-s)^2\, \mathrm{d}s.
\end{align*}
Taking expectation on both sides we obtain
\begin{align*}
\mathbb{E}\left[\Psi \left(\frac{Z_n}{n}\right)\right]  &= \Psi(e_n) + \frac{1}{2} \Psi''(e_n) \, \operatorname{Var} \left(\frac{Z_n}{n} \right) \\
&\quad + \frac{1}{2} \mathbb{E}\left[\left(\frac{Z_n}{n} -e_n\right)^3  \int_0^1 \Psi''' \left(e_n + s \left(\frac{Z_n}{n} -e_n\right)\right) \, (1-s)^2\, \mathrm{d}s \right].
\end{align*}
By applying again Taylor's formula to $\Psi'$ and $\Psi''$, the last term becomes
\begin{align*}
\mathbb{E}\left[\Psi \left(\frac{Z_n}{n}\right)\right]  &= \Psi(z_0) + \frac{1}{2n} \, \left(\frac{\lambda+1}{\lambda}\right)^\alpha\frac{1-\alpha}{\lambda +1 }\, \Psi'(z_0)  + \frac{1}{2}\, \Psi''(z_0) \, \frac{n \Sigma^2 + O(1)}{n^2} \\
&\quad + \frac{1}{2} \mathbb{E}\left[\left(\frac{Z_n}{n} -e_n\right)^3  \int_0^1 \Psi''' \left(e_n + s \left(\frac{Z_n}{n} -e_n\right)\right) \, (1-s)^2\, \mathrm{d}s \right].
\end{align*}
Since $\Psi'''$ is bounded, to reach the conclusion it is enough to prove that
\begin{displaymath}
    \lim_{n \to +\infty }   n \, \mathbb{E}\left[\left| \frac{Z_n}{n} - e_n\right|^3\right] = \lim_{n \to +\infty }   n \, \mathbb{E}\left[\left| \frac{Z_n}{n} - z_0\right|^3\right] =0.
\end{displaymath}
Now, by combining Lemma \ref{lem:UI_Zn} with Proposition \ref{prop:clt_Z}, we get
\begin{displaymath}
\lim_{n \to +\infty} \mathbb{E}\left[\left| \frac{Z_n - nz_0}{\sqrt{n}}  \right|^3\right] = \int_\mathbb{R} |x|^3 \, \mathrm{d} \Phi(x)
\end{displaymath}
in view of the uniform integrability. Since $n \, \left| \frac{Z_n}{n} - z_0\right|^3 = \frac{1}{\sqrt{n}} \left| \frac{Z_n - nz_0}{\sqrt{n}}\right|^3$, we finally obtain that
\begin{displaymath}
   \lim_{n \to +\infty} \mathbb{E}\left[ \frac{1}{\sqrt{n}} \left| \frac{Z_n - nz_0}{\sqrt{n}}  \right|^3\right]
    =  \lim_{n \to +\infty} \frac{1}{\sqrt{n}} \times \lim_{n \to +\infty} \mathbb{E}\left[\left| \frac{Z_n - nz_0}{\sqrt{n}}  \right|^3\right] = 0.
   \end{displaymath}
This concludes the proof.
\end{proof}


\subsubsection{Proof of Proposition \ref{prop:clt_R}}\label{sec:BE_Rn}

\begin{proof}[Proof of the Berry-Esseen inequality \eqref{eq:be_rn}] Denote by $G_{R_n(z)}$ the probability generating function of $R_n(z)$. Then, the proof of Proposition \ref{prop:clt_R} is based on the study of the large $n$ asymptotic behavior of $G_{R_n(z)}$ and its derivatives. This study is based on the following steps.
\begin{enumerate}
\item State four preparatory results of technical nature, that are Lemmata \ref{lem:Jed}, \ref{lem:xi_Paris}, \ref{lem:paris}, and \ref{lem: sup is continuous}. 
\item Show how the functions $\mu(z)$ and $\sigma(z)$ in Proposition \ref{prop:clt_R} are related to the moments of $R_n(z)$.
\item Prove a Berry-Esseen inequality for $R_n(z)$; 
\item Conclude the proof of the Berry-Esseen inequality \eqref{eq:be_rn}.
\end{enumerate}
\end{proof}

\begin{paragraph}{\underline{Step 1} of the proof of the Berry-Esseen inequality \eqref{eq:be_rn}} 
The first lemma allows to write a quantity of interest in terms of the so-called Kr\"atzel function 
(see \citep[formula (1.1)]{Par(21)}), namely
\begin{displaymath}
    F_{p, \nu}(w): = \int_0^{+\infty} t^{\nu - 1} \, \exp \left\{t^p - \frac{w}{t} \right\}  \, \mathrm{d} t
\end{displaymath}
for any $w \in \mathbb{C}^+ := \{ w \in \mathbb{C} \, : \, Re(w)>0\}$. A standard check shows that $F_{p, \nu}$ is holomorphic on the whole of $\mathbb{C}^+$. 

\begin{lem}\label{lem:Jed}
\label{lem: prelim J}
   For $\varepsilon\geq 0, \, \delta\geq 0 , \,  y \in \mathbb{C}^+$ define
   \begin{equation}
   \label{eq: def_prel_J}
       J_{\varepsilon, \delta}^{(n)} (y) : = \int_0^{+\infty} x^{n+\varepsilon -\frac{2-\alpha}{2(1-\alpha)} + \frac{\delta\alpha}{1-\alpha}} \, \exp\left\{-x (ny)^\frac{1}{\alpha} - (1-\alpha) \left(\frac{\alpha}{x}\right)^\frac{\alpha}{1-\alpha}\right\} \, \mathrm{d}x.
   \end{equation}
Then, it holds that
   \begin{displaymath}
       J_{\varepsilon, \delta}^{(n)} (y) =  \mathcal{C}(\alpha, n, y) \, \left(\frac{1}{ny}\right)^{\frac{\varepsilon}{\alpha} + \frac{\delta}{1-\alpha}} \, \mathcal{F}^{(n)}_{\varrho(\varepsilon, \delta)}(y),
   \end{displaymath}
   where:
   \begin{align} 
    \label{Ccal_alphany}
       \mathcal{C}(\alpha, n, y) &:= \frac{1-\alpha}{\alpha} \, \left(\frac{1}{ny}\right)^{\frac{n}{\alpha} - \frac{2-\alpha}{2\alpha(1-\alpha)} + \frac{1}{\alpha}} \\
  \label{varrho_epsdel}
       \varrho(\varepsilon, \delta) &:= \frac{1}{2} - \delta - \varepsilon \frac{1-\alpha}{\alpha} \\
  \mathcal{F}^{(n)}_{\varrho}(y) &:= F_{\frac{1-\alpha}{\alpha}, \, n \frac{1-\alpha}{\alpha} - \varrho} \left((1-\alpha) \left(\alpha^\alpha ny\right)^\frac{1}{1- \alpha}\right). \nonumber
   \end{align}
\end{lem}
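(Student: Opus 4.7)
The plan is to reduce $J^{(n)}_{\varepsilon,\delta}(y)$ to a Kr\"atzel function through a single change of variables, verify the identity for real $y>0$, and extend it to $y\in\mathbb{C}^+$ by analytic continuation (both sides are holomorphic on $\mathbb{C}^+$: the left-hand side by dominated convergence on compact subsets, the right-hand side because $F_{p,\nu}$ is holomorphic on $\mathbb{C}^+$, as already noted in the paper). The substitution is dictated by the first exponential factor in the integrand. I would set
\begin{displaymath}
t := \bigl[x\,(ny)^{1/\alpha}\bigr]^{\alpha/(1-\alpha)}, \qquad \text{equivalently } x = t^{(1-\alpha)/\alpha}(ny)^{-1/\alpha},
\end{displaymath}
so that $-x(ny)^{1/\alpha}$ is transformed exactly into $-t^{(1-\alpha)/\alpha}$, matching the $-t^{p}$ with $p=(1-\alpha)/\alpha$ dictated by $\mathcal{F}^{(n)}_{\varrho(\varepsilon,\delta)}(y)$.

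The key check is that the second exponential term transforms correctly. Under the substitution one has $(\alpha/x)^{\alpha/(1-\alpha)} = \alpha^{\alpha/(1-\alpha)}(ny)^{1/(1-\alpha)}/t$, so
\begin{displaymath}
-(1-\alpha)(\alpha/x)^{\alpha/(1-\alpha)} = -w/t, \qquad w := (1-\alpha)(\alpha^{\alpha} ny)^{1/(1-\alpha)},
\end{displaymath}
which is exactly the Kr\"atzel argument appearing inside $\mathcal{F}^{(n)}_{\varrho(\varepsilon,\delta)}(y)$. The remaining work is algebraic bookkeeping: writing $M := n+\varepsilon-\frac{2-\alpha}{2(1-\alpha)}+\frac{\delta\alpha}{1-\alpha}$, the monomial $x^{M}$ together with the Jacobian $\mathrm{d}x = \frac{1-\alpha}{\alpha}(ny)^{-1/\alpha}\,t^{(1-2\alpha)/\alpha}\,\mathrm{d}t$ produces
\begin{displaymath}
\frac{1-\alpha}{\alpha}\,(ny)^{-M/\alpha - 1/\alpha}\,t^{\,M(1-\alpha)/\alpha + (1-2\alpha)/\alpha}\,\mathrm{d}t.
\end{displaymath}
A direct computation using the definition of $\varrho(\varepsilon,\delta)$ shows that the exponent of $t$ equals $\nu-1$ with $\nu = n(1-\alpha)/\alpha - \varrho(\varepsilon,\delta)$, and that the $(ny)$-prefactor factorizes as $\mathcal{C}(\alpha,n,y)\cdot(1/(ny))^{\varepsilon/\alpha + \delta/(1-\alpha)}$, yielding the claimed identity.

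The only obstacle is the bookkeeping of fractional exponents such as $\alpha/(1-\alpha)$, $(1-\alpha)/\alpha$ and $(2-\alpha)/(2(1-\alpha))$; no additional idea is required once the substitution above is chosen, which is itself forced by the structure of the Kr\"atzel function.
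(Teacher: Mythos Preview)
Your proposal is correct and follows essentially the same approach as the paper: the paper's proof also fixes $y>0$, performs the substitution $t=[x(ny)^{1/\alpha}]^{\alpha/(1-\alpha)}$ to reduce the integral to the Kr\"atzel function, and then invokes uniqueness of analytic continuation to extend the identity to all of $\mathbb{C}^+$. Your additional bookkeeping of the exponents and the Jacobian is accurate and simply makes explicit what the paper leaves to ``direct computation.''
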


\begin{proof}
First, fix $y \in (0, +\infty)$ and prove the result by direct computation, using the substitution $t = \left[ x (ny)^{\frac{1}{\alpha}}\right]^\frac{\alpha}{1-\alpha}$ in the integral on the right-hand side of \eqref{eq: def_prel_J}. 
Notice that both sides of the identity at issue are analytic function of $y$, as $y$ varies in $\mathbb{C}^+$. To be precise, we are considering the principal branch of each power of $y$, which is holomorphic on 
$\mathbb{C}\setminus (-\infty, 0]$. Finally, since both sides of \eqref{eq: def_prel_J} coincide on $(0, +\infty)$, then they must coincide on the whole of $\mathbb{C}^+$ by uniqueness of the analytic continuation. 
\end{proof}

The next lemma is concerned with equation \eqref{eq:tauParis}. To simplify its statement, we rewrite \eqref{eq:tauParis} in a simpler form. For fixed $v \in (0,+\infty)$, let $\xi(v)$ denote the only real, positive solution to the equation
\begin{equation}
\label{tilde_tau}
    \xi(v)^{\frac{1}{\alpha}} = v\xi(v) +1.
\end{equation}
Indeed, the regularity of the mapping $v \mapsto \xi(v)$, contained in the next statement, is an important issue in what follows. 

\begin{lem}\label{lem:xi_Paris}
The mapping $v \mapsto \xi(v)$ admits an analytical continuation to the whole of $\mathbb{C}^+$. In particular, $v \mapsto \xi(v)$  is analytic on $(0,+\infty)$.
\end{lem}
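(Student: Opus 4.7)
The plan is to apply the analytic implicit function theorem to the holomorphic function $H(v,\xi) := \xi^{1/\alpha} - v\xi - 1$, defined on $\mathbb{C}^+ \times (\mathbb{C}\setminus(-\infty,0])$ using the principal branch of $\xi^{1/\alpha}$. The real positive solutions to \eqref{tilde_tau} form the zero set of $H$ along the positive real axis, and for any $v > 0$ with $\xi = \xi(v) > 1$ we can evaluate
\begin{equation*}
\partial_\xi H(v,\xi(v)) = \frac{1}{\alpha}\xi(v)^{\frac{1}{\alpha}-1} - v = \frac{1}{\alpha}\left(v + \frac{1}{\xi(v)}\right) - v = \frac{v(1-\alpha)}{\alpha} + \frac{1}{\alpha\,\xi(v)} > 0,
\end{equation*}
where the second equality uses $\xi(v)^{1/\alpha} = v\xi(v)+1$. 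Hence the analytic implicit function theorem produces an analytic extension of $v \mapsto \xi(v)$ to some open complex neighbourhood of $(0,+\infty)$; in particular, analyticity on $(0,+\infty)$ follows at once.

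To extend to the whole simply-connected domain $\mathbb{C}^+$, I would invert the roles of the two variables and work instead with the explicit holomorphic map
\begin{equation*}
\varphi(\xi) := \frac{\xi^{1/\alpha}-1}{\xi},\qquad \varphi'(\xi) = \frac{1}{\xi^{2}}\left(\frac{1-\alpha}{\alpha}\,\xi^{1/\alpha} + 1\right),
\end{equation*}
defined on $\{\xi \in \mathbb{C}\setminus(-\infty,0] : \text{Re}(\xi^{1/\alpha}) > 0\}$. Along $(1,+\infty)$, $\varphi$ is a strictly increasing real-analytic bijection onto $(0,+\infty)$ with $\varphi' > 0$, so it is locally biholomorphic near this half-line. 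The goal is to identify a domain $D \subset \mathbb{C}$ containing $(1,+\infty)$ such that $\varphi: D \to \mathbb{C}^+$ is a biholomorphism; the inverse then provides the desired holomorphic $\xi:\mathbb{C}^+\to D$.

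For the global argument I would combine three ingredients: (i) simple-connectedness of $\mathbb{C}^+$, which by the monodromy theorem guarantees that any germ at a point of $(0,+\infty)$ admits a single-valued analytic continuation along every path in $\mathbb{C}^+$ as long as no branch point is encountered; (ii) the fact that branch points of the continuation correspond exactly to the vanishing of $\partial_\xi H$, i.e.~to points where $\alpha v = \xi^{1/\alpha-1}$ holds simultaneously with $\xi^{1/\alpha}=v\xi+1$, a system that forces $\xi v = \alpha/(1-\alpha)$ and pins down a single candidate value of $v$, which one checks does not lie in $\mathbb{C}^+$ (or, if it does, lies off the continued branch); (iii) a properness/compactness argument bounding $|\xi(v)|$ away from $0$ and $\infty$ on compact subsets of $\mathbb{C}^+$, to prevent the branch from escaping.

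The main obstacle is precisely step (iii) together with the verification in step (ii) that the continued branch never meets a critical point of $H$. A clean way to finesse both is to work with $\varphi$ directly: show that $\varphi$ is injective on the connected component $D$ of $\{\varphi' \neq 0\} \cap \{|\arg \xi^{1/\alpha}| < \pi/2\}$ that contains $(1,+\infty)$, and that $\varphi(D) = \mathbb{C}^+$, using that $\varphi$ is proper from $D$ to $\mathbb{C}^+$ together with the fact that a proper local biholomorphism between connected complex manifolds of equal dimension is a covering map, hence a biholomorphism since $\mathbb{C}^+$ is simply connected. This globalises the local inversion and yields analyticity of $\xi = \varphi^{-1}$ on all of $\mathbb{C}^+$.
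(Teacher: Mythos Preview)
Your local argument via the analytic implicit function theorem is correct and cleanly yields analyticity on a complex neighbourhood of $(0,+\infty)$; this already gives the ``in particular'' clause.

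The global extension to all of $\mathbb{C}^+$, however, has genuine gaps. First, your domain $D$ is not suitable: $D$ as defined is the sector $\{|\arg\xi|<\alpha\pi/2\}$, which contains the segment $(0,1)$, and for $\xi\in(0,1)$ one has $\varphi(\xi)=(\xi^{1/\alpha}-1)/\xi<0$. Thus $\varphi(D)\not\subset\mathbb{C}^+$, and certainly $\varphi(D)\neq\mathbb{C}^+$. You presumably want the component of $\varphi^{-1}(\mathbb{C}^+)$ containing $(1,+\infty)$, but then the subsequent claims need rechecking. Second, the properness of $\varphi\colon D\to\mathbb{C}^+$ is asserted but not verified; this is the heart of the covering-map argument and cannot be taken for granted. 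Third, your critical-point calculation contains an error: combining $\xi^{1/\alpha}=\alpha v\xi$ with $\xi^{1/\alpha}=v\xi+1$ gives $v\xi=-1/(1-\alpha)$, not $\alpha/(1-\alpha)$. Working this out, the critical values are $v_c=\alpha^{-\alpha}(1-\alpha)^{\alpha-1}e^{\pm i\pi(1-\alpha)}$, which lie in $\mathbb{C}^+$ precisely when $\alpha>1/2$. So your parenthetical ``one checks does not lie in $\mathbb{C}^+$'' is false for half the parameter range, and you would need a separate argument that the continued branch avoids these points.

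The paper takes an entirely different, constructive route: it writes down explicit power-series solutions (of Lambert type) valid for small and large $|v|$ respectively, recognises each as a Fox $H$-function with a Mellin--Barnes integral representation valid on a sector, and then shows by a contour-deformation and residue argument that the two integral representations agree on the overlap. This is technically heavier but sidesteps the monodromy and branch-point issues entirely. Your approach could probably be completed, but it would require a careful analysis of the Riemann surface of $\varphi$ and an honest proof of properness on the correct domain, which is more work than your sketch suggests.
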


\begin{proof}
First, the existence of the mapping $v \mapsto \xi(v)$ from $[0, +\infty)$ into $[1, +\infty)$ is checked by basic calculus tools. Then, a direct application of \citep[Equation (11.5)]{Bel(19)}), with the same $\alpha$ as in \eqref{tilde_tau},
$\beta = \alpha$, $x = \xi(v)^{\frac{1}{\alpha}}$, and $y = v$, shows that the function
 \begin{displaymath}
 \xi_1(v) := \alpha \sum_{n=0}^{+\infty} \frac{\Gamma(\alpha n + \alpha)}{\Gamma\big((\alpha-1)n + \alpha + 1\big)} \frac{v^n}{n!}
 \end{displaymath}
provides a solution to \eqref{tilde_tau}. More precisely, the above series converges inside the disc $|v| < \alpha^{-\alpha} (1-\alpha)^{\alpha - 1}$, and meets $\xi_1(v) = 1 + \alpha v + O(v^2)$ as $v \to 0$. Whence, $\xi(v) =  \xi_1(v)$
for any $v \in [0, \alpha^{-\alpha} (1-\alpha)^{\alpha - 1})$. To get closer to the goal, it is crucial to notice that $\xi_1$ is a special case of a so-called H-function (or Fox function), whose theory is exposed, e.g., in 
\citep[Chapters 1--2]{KiSa(04)}) and \citep[Chapter 2]{PaKa(01)}). Exploiting this, it is worth considering
\begin{displaymath}
\xi_2(v) := \frac{\alpha}{2\pi i} \int_{\gamma_2} \frac{\Gamma(\alpha s + \alpha) \Gamma(-s)}{\Gamma\big((\alpha-1)s + \alpha + 1\big)} (-v)^s \mathrm{d}s
\end{displaymath}
where $\gamma_2$ is a path defined as follows: it stars at $-i\infty$; proceeds upward along the imaginary axis; before reaching the real axis, encircles the origin clock-wise, remaining on $\mathbb{C}^- := \{ w \in \mathbb{C} \, : \, Re(w)<0\}$ and leaving all the poles of the form $-1- m\alpha^{-1}$, $m \in \N_0$, to its left; proceeds again along the imaginary axis towards $+i\infty$. See Fig 1 below. 

\begin{figure}[h]
\begin{minipage}{0.4\textwidth}
\centering
\includegraphics [width= 1\textwidth, page = 1]{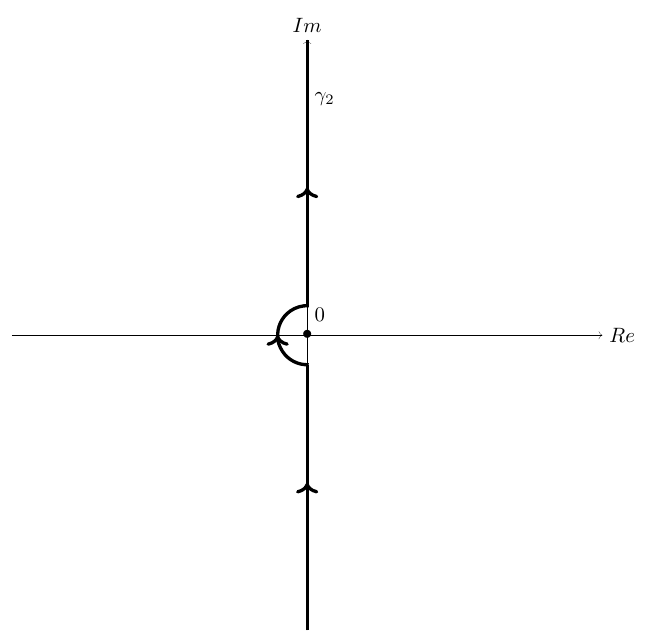}
\end{minipage}
\begin{minipage}{0.4\textwidth}
\centering
\includegraphics [width= 1\textwidth, page = 2]{figs.pdf}
\end{minipage}
\caption{The paths $\gamma_2$ (left) and $\gamma_{2, N}$(right)}
\end{figure}

At this stage, an application of \citep[Theorem 1.1, Formula (1.2.20)]{KiSa(04)} with $a^{\ast} = 1 + \alpha - (1-\alpha) = 2\alpha$ therein shows that $\xi_2$ is well-defined and holomorphic in the domain 
$\{v \in \mathbb{C}\ :\ \ |\text{Arg}(v)| < \pi \alpha\}$, where $\text{Arg}$ stands for the principal argument. Therefore, recalling that 
\begin{displaymath}
Res\left(s \mapsto \Gamma(-s); n\right) = \frac{(-1)^{n+1}}{n!} \qquad (n \in \N_0), 
\end{displaymath}
the residue theorem entails that 
\begin{displaymath}
\frac{1}{2\pi i} \oint_{\gamma_{2,N}} \frac{\Gamma(\alpha s + \alpha) \Gamma(-s)}{\Gamma\big((\alpha-1)s + \alpha + 1\big)} (-v)^s \mathrm{d}s = 
\sum_{n=0}^{N} \frac{\Gamma(\alpha n + \alpha)}{\Gamma\big((\alpha-1)n + \alpha + 1\big)} \frac{v^n}{n!}
\end{displaymath}
holds for any $v \in \mathbb{C}$ and $N \in \N$, where $\gamma_{2,N}$ is the closed path that starts at $-i(N + 1/2)$, proceeds exactly as $\gamma_2$ up to $+i(N + 1/2)$, and comes back to $-i(N + 1/2)$ along the semicircle
of radius $(N + 1/2)$, centered at the origin, routed clock-wise. See Fig 1. Exploiting the estimates displayed in \citep[Section 1.2]{KiSa(04)}, we let $N$ go to infinity to conclude that $\xi_1(v) = \xi_2(v)$ on 
$\{ w \in \mathbb{C} \, : \, |v| < \alpha^{-\alpha} (1-\alpha)^{\alpha - 1}, |\text{Arg}(v)| < \pi \alpha\}$.  Whence, $\xi(v) =  \xi_2(v)$ for any $v \in (0, \alpha^{-\alpha} (1-\alpha)^{\alpha - 1})$. In order to include the half-line
$(\alpha^{-\alpha} (1-\alpha)^{\alpha - 1}, +\infty)$, we invoke again \citep[Equation (11.5)]{Bel(19)}), with $-\alpha(1-\alpha)^{-1}$ in the place of $\alpha$ therein, $\beta = \alpha(1-\alpha)^{-1}$, 
$x = v^{-1} \xi(v)^{\frac{1-\alpha}{\alpha}}$, and $y = v^{\frac{1}{1-\alpha}}$. We get that the function
 \begin{displaymath}
 \xi_3(v) := \frac{\alpha}{1 - \alpha} v^{\frac{\alpha}{1-\alpha}} \sum_{n=0}^{+\infty} \frac{\Gamma\left(\frac{n -\alpha}{1- \alpha}\right)}{\Gamma\left(\frac{\alpha n + 1 -2\alpha}{1- \alpha}\right)} 
 \frac{(-1)^{n+1}}{n!}\left(v^{\frac{-1}{1-\alpha}}\right)^n
 \end{displaymath}
provides a solution to \eqref{tilde_tau}. More precisely, the above series converges if $|v| > \alpha^{-\alpha} (1-\alpha)^{\alpha - 1}$, and represents a holomorphic function on the region
$\{ w \in \mathbb{C} \, : \, |v| > \alpha^{-\alpha} (1-\alpha)^{\alpha - 1}, |\text{Arg}(v)| < \pi\}$, provided that the powers of $v$ are intended as their the principal branches. Since 
$\xi_3(v) = v^{\frac{\alpha}{1-\alpha}} + O(v^{-1})$ as $v \to +\infty$, we deduce that $\xi(v) =  \xi_3(v)$ for any $v \in (\alpha^{-\alpha} (1-\alpha)^{\alpha - 1}, +\infty)$. Now, we assume temporarily that $\alpha$ is not 
of the form $q(q+1)^{-1}$ for some $q \in \N$. This way, none of the values of the form $\alpha - m(1-\alpha)$, $m \in \N$, can coincide with the origin.
Resorting once again to the theory of H-functions, we consider
\begin{displaymath}
\xi_4(v) := -\frac{\alpha}{1 - \alpha} v^{\frac{\alpha}{1-\alpha}} 
\frac{1}{2\pi i} \int_{\gamma_4} \frac{\Gamma\left(\frac{s -\alpha}{1- \alpha}\right) \Gamma(-s)}{\Gamma\left(\frac{\alpha s + 1 -2\alpha}{1- \alpha}\right)} \left(v^{\frac{-1}{1-\alpha}}\right)^s \mathrm{d}s
\end{displaymath}
where $\gamma_4$ is a path of the following form: it starts at $-i\infty$; proceeds upward along the imaginary axis before reaching the origin; turns right to cross upward the real axis between $\alpha$ and 1; avoids all the poles of the form 
$\alpha - m(1-\alpha)$, $m \in \N_0$, which lie at the right of the origin, by turning around them counter-clock-wise; crosses again the real axis downward between the origin itself and the  
least positive pole of the form $\alpha - m(1-\alpha)$; encircles the origin clock-wise, ranging in the fourth, third and second quadrant, respectively, by leaving all the negative poles of the form $\alpha - m(1-\alpha)$ to its left; proceeds again along the imaginary axis towards $+i\infty$. See the Fig 2 below. 

\begin{figure}[h]
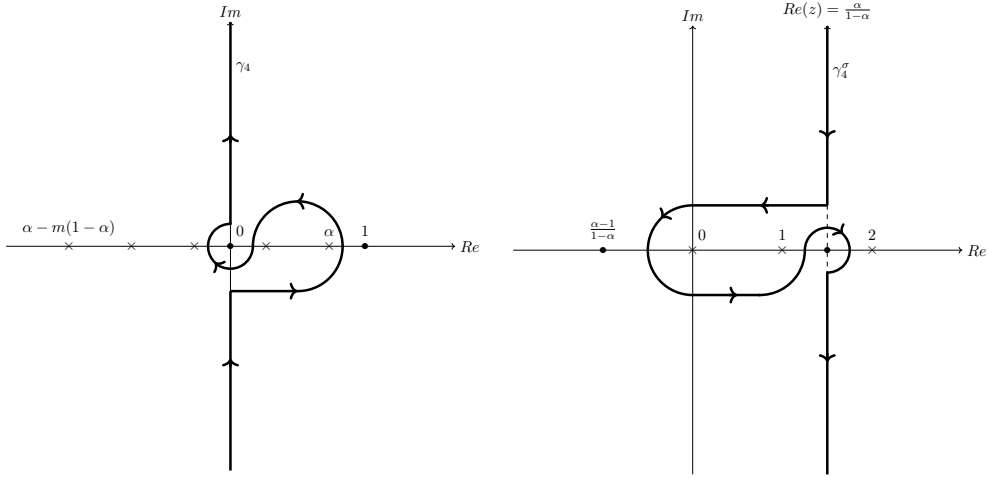

\begin{minipage}{0.4\textwidth}
\centering
\includegraphics [width= 1\textwidth, page = 3]{figs.pdf}
\end{minipage}
\begin{minipage}{0.4\textwidth}
\centering
\includegraphics [width= 1\textwidth, page = 4]{figs.pdf}
\end{minipage}
\caption{The path $\gamma_4$ (left) and its transformation $\gamma_{4}^\sigma$ trough the map $s \mapsto \sigma(s) : = \frac{\alpha-s}{1-\alpha}$ (right)}
\end{figure}

Again, an application of \citep[Theorem 1.1, Formula (1.2.20)]{KiSa(04)} with $a^{\ast} = 1 - \frac{\alpha}{1-\alpha} + \frac{1}{1-\alpha} = 2$ therein shows that $\xi_4$ is well-defined and holomorphic in 
$\{v \in \mathbb{C}\ :\ \ |\text{Arg}(v)| < \pi(1- \alpha)\}$.
A similar argument, as the one above based of the residue theorem, leads us to conclude that $\xi_3(v) = \xi_4(v)$ on $\{ w \in \mathbb{C} \, : \, |v| > \alpha^{-\alpha} (1-\alpha)^{\alpha - 1}, |\text{Arg}(v)| < \pi (1- \alpha)\}$.  
Whence, $\xi(v) =  \xi_4(v)$ for any $v \in (\alpha^{-\alpha} (1-\alpha)^{\alpha - 1}, +\infty)$. It remains to show that $\xi_2(v) =  \xi_4(v)$ on $\{v \in \mathbb{C}\ :\ \ |\text{Arg}(v)| < \pi \alpha_{\ast}\}$, where
$\alpha_{\ast} := \min\{\alpha, 1-\alpha\}$. To this aim, we change the variable in the expression of $\xi_4$ by setting $\sigma = \frac{\alpha - s}{1- \alpha}$, to obtain
\begin{displaymath}
\xi_4(v) = \frac{\alpha}{2\pi i} \int_{\gamma_4^{\sigma}} \frac{\Gamma\big(1 - (\sigma(\alpha-1) + \alpha + 1)\big) \Gamma(-\sigma)}{\Gamma\big(1 - (\alpha + \alpha\sigma)\big)} v^{\sigma} \mathrm{d}\sigma
\end{displaymath}
where $\gamma_4^{\sigma}$ denotes the transformation of $\gamma_4$. See again Fig 2. Now, an application of the reflection formula of the gamma function yields
\begin{displaymath}
\xi_4(v) = \frac{\alpha}{2\pi i} \int_{-\gamma_4^{\sigma}} \frac{\Gamma(\alpha \sigma + \alpha) \Gamma(-\sigma)}{\Gamma\big((\alpha-1)\sigma + \alpha + 1\big)} 
\frac{\sin\big(\pi(\alpha \sigma + \alpha)\big)}{\sin\big(\pi((\alpha-1)\sigma + \alpha)\big)} v^{\sigma} \mathrm{d}\sigma
\end{displaymath}
where the minus in front of $\gamma_4^{\sigma}$ means a change of orientation.
Finally, by resorting to a well-known Cauchy theorem, we can deform the path $-\gamma_4^{\sigma}$ into a new path that consists of the conjunction of $\gamma_2$ with a loop the encircles the point $\frac{\alpha}{1-\alpha}$ 
counter-clock-wise, without changing the value of the integral. See Fig 3 below.

\begin{figure}[h]
\centering
\includegraphics [width= 0.4\textwidth, page = 5]{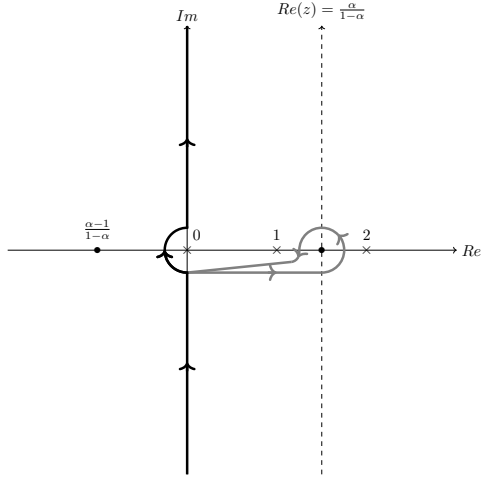}
\caption{Modification of the path $-\gamma_4^\sigma$, equal to the conjunction of $\gamma_2$ (black in the figure) and a loop around the point $\frac{\alpha}{1-\alpha}$ (gray in the figure).}
\end{figure}

Thus, setting
\begin{displaymath}
g(\sigma; v) :=  \frac{\Gamma(\alpha \sigma + \alpha) \Gamma(-\sigma)}{\Gamma\big((\alpha-1)\sigma + \alpha + 1\big)} \frac{\sin\big(\pi(\alpha \sigma + \alpha)\big)}{\sin\big(\pi((\alpha-1)\sigma + \alpha)\big)} v^{\sigma}
\end{displaymath}
we have
\begin{displaymath}
\xi_4(v) = \frac{\alpha}{2\pi i} \int_{\gamma_2} g(\sigma; v) \mathrm{d}\sigma + \alpha\ Res\left(g(\cdot ; v); \frac{\alpha}{1-\alpha}\right).
\end{displaymath}
Repeating the above argument based on the approximation of $\gamma_2$ with $\gamma_{2,N}$, we get
\begin{displaymath}
\frac{1}{2\pi i} \oint_{\gamma_{2,N}} g(\sigma; v) \mathrm{d}\sigma = - \sum_{n=0}^N Res\left(g(\cdot ; v); n\right) - \sum_{n=0}^{N_\alpha} Res\left(g(\cdot ; v); \frac{n+\alpha}{1-\alpha}\right)
\end{displaymath}
with $N_\alpha := [(N+1/2)(1-\alpha)-\alpha]$. At this stage, it is enough to notice that 
\begin{displaymath}
Res\left(g(\cdot ; v); \frac{n+\alpha}{1-\alpha}\right) = 0
\end{displaymath}
for any $n \in \N$. Therefore, taking the limit as $N \to +\infty$, we can conclude that $\xi_4(v) = \xi_1(v)$ on $\{ w \in \mathbb{C} \, : \, |v| < \alpha^{-\alpha} (1-\alpha)^{\alpha - 1}, |\text{Arg}(v)| < \pi \alpha_{\ast}\}$ and, consequently,
that $\xi_4(v) = \xi_2(v)$ on $\{ w \in \mathbb{C} \, : \, |\text{Arg}(v)| < \pi \alpha_{\ast}\}$. 
Finally, after proving the last identity, we can remove the constraint that $\alpha \neq q(q+1)^{-1}$ for some $q \in \N$ thanks to a straightforward approximation argument with respect to $\alpha$, to conclude that 
$\xi_4(v) = \xi_2(v)$ on $\{ w \in \mathbb{C} \, : \, |\text{Arg}(v)| < \pi \alpha_{\ast}\}$ whichever $\alpha \in (0,1)$ may be. At this stage, the remark that $\mathbb{C}^+ \subseteq  
\left\{ w \in \mathbb{C} \, : \, |\text{Arg}(v)| < \pi \max\{\alpha, 1-\alpha\} \right\}$ leads to identify the desired analytical continuation of $v \mapsto \xi(v)$ with either $v \mapsto \xi_2(v)$ or $v \mapsto \xi_4(v)$ according to
whether $\alpha \geq 1/2$ or $\alpha < 1/2$, respectively. This concludes the proof.
\end{proof}

Now, we investigate the large $n$
asymptotic behaviour of the function $\mathcal{F}^{(n)}_{\varrho}$ that appears in Lemma \ref{lem:Jed}. Our Lemma \ref{lem:paris} below can be regarded as a re-adaptation of \citep[formula (5.3)]{Par(21)}. 
We just premise the definition of a useful notation.
\begin{defi}
    \label{def: O_uc}
    Let $\mathcal{D}$ be some subset of $\mathbb{R}^d$ (or $\mathbb{C}$).
    Consider two sequences of functions $\{f_n\}_{n \in \mathbb{N}}$, $\{g_n\}_{n \in \mathbb{N}}$, with $f_n,\,  g_n: \mathcal{D} \to \mathbb{C}$ for every $n$. If, for some $\beta \in \mathbb{R}$, there exists a function $h : \mathcal{D} \to \mathbb{C}$ such that 
    \begin{displaymath}
        n^{-\beta} \left[f_n - g_n \right] \rightarrow h
    \end{displaymath}
    uniformly on compact sets, we write
    \begin{displaymath}
        f_n = g_n + O_{u.c.}(n^\beta).
    \end{displaymath}
\end{defi}

Let now recall some previous notation. For $y \in \mathbb{C}^+$, set $\tau(y) := \xi(1/(\alpha y))$, with the same function $\xi$ as in Lemma \ref{lem:xi_Paris}, and $D(y)$ be as in \eqref{eq:defD}, with $y$ in place of $z$. 
Moreover, set
    \begin{equation} \label{gParis}
    g(y ) := \frac{y}{\tau(y)} - \frac{1-\alpha}{\alpha} \log \tau(y)
    \end{equation}
where $\log$ is intended as the principal branch of the logarithm.

\begin{lem} 
    \label{lem:paris}
    With the above notation, for $y \in \mathbb{C}^+$ it holds that
     \begin{equation} \label{Paris_readapted}
     \mathcal{F}^{(n)}_{\varrho}(y) =  \mathcal{P}^{(n)}_{\varrho}(y) \, \left[1 + \frac{1}{n} \mathcal{R}_\varrho (y) + r_\varrho^{(n)}(y) \right]
     \end{equation}
    where:
    \begin{enumerate}
    \item[i)] the principal part $\mathcal{P}^{(n)}_{\varrho}$ is defined as
    \begin{displaymath}
     \mathcal{P}^{(n)}_{\varrho}(y) := \left(\alpha n y \right)^{n- \varrho \frac{\alpha}{1-\alpha}} \left(\frac{1}{\tau(y)}\right)^\varrho \, \exp\left\{-n \left[1+g(y)\right]\right\} \, \sqrt{\frac{2\pi \alpha }{n(1-\alpha) D(y) }};
   \end{displaymath}
    \item[ii)] the first order remainder $\mathcal{R}_\varrho$ is given by
   \begin{displaymath}
       \mathcal{R}_{\varrho}(y) = \varrho^2 \, c_1(y) + \varrho \, c_2(y) + c_3(y)
   \end{displaymath}
   with 
   \begin{displaymath}
    c_1(y) := \frac{\alpha}{2(1-\alpha)D(y)} \hspace{20pt} \text{and} \hspace{20pt}  c_2(y) := \frac{1-2\alpha}{2(1-\alpha) D(y)}+\frac{1}{2D^2(y)}
\end{displaymath}
   and another analytic function $c_3$ independent of $\varrho$;
    \item[iii)] the second order remainder $r_\varrho^{(n)}$ satisfies, as $n \to +\infty$ 
   \begin{displaymath}
       r_\varrho^{(n)}(y)  = O_{u.c.} \left(\frac{1}{n^2}\right).
   \end{displaymath}
   \end{enumerate}
\end{lem}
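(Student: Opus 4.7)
The plan is to identify $\mathcal{F}^{(n)}_{\varrho}(y)$ as a Kr\"atzel integral in the large-parameter regime and to obtain \eqref{Paris_readapted} through a saddle-point (Laplace-type) expansion carried out to third order, essentially a re-reading of \citet[formula (5.3)]{Par(21)} in the present scaling. Starting from the integral definition of $F_{p,\nu}$ with $p = (1-\alpha)/\alpha$, $\nu = n(1-\alpha)/\alpha - \varrho$ and $w = (1-\alpha)(\alpha^\alpha n y)^{1/(1-\alpha)}$, I would rescale the integration variable so as to put the integrand in the form $e^{-n\Phi(u;y)}$ times a smooth, $\varrho$-dependent amplitude. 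A direct computation then shows that the saddle-point equation $\partial_u \Phi(u;y)=0$ is precisely \eqref{eq:tauParis}, so its unique positive solution is (up to a known power) $\tau(y)$; by Lemma \ref{lem:xi_Paris}, the saddle depends analytically on $y \in \mathbb{C}^+$, which lays the ground for uniform estimates on compact sets.

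Once the saddle is in place, I would execute the usual four-step bookkeeping of the Laplace method. First, the value of the phase at the saddle collapses algebraically via \eqref{eq:tauParis} to $1 + g(y)$ with $g$ as in \eqref{gParis}, producing the $\exp\{-n(1+g(y))\}$ factor of $\mathcal{P}^{(n)}_\varrho$. Second, $\partial_u^2 \Phi$ at the saddle turns out to be $[(1-\alpha)/\alpha]\,D(y)$, so Gaussian integration around the saddle contributes exactly $\sqrt{2\pi\alpha/[n(1-\alpha)D(y)]}$. Third, the algebraic leftovers from the change of variable and from $t^{\nu-1}$ reassemble into $(\alpha n y)^{n - \varrho\alpha/(1-\alpha)}\,\tau(y)^{-\varrho}$, completing the identification of $\mathcal{P}^{(n)}_\varrho$. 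Fourth, for the $O(1/n)$ correction I would expand the phase to fourth order in $(u-u_\ast)$ and the amplitude to second order and compute Gaussian moments; because the $\varrho$-dependence of the amplitude enters only through $u^{-\varrho}$, its first two derivatives with respect to $\log u$ are linear and quadratic in $\varrho$, forcing $\mathcal{R}_\varrho(y)$ to be a polynomial of degree at most two in $\varrho$. Matching the coefficient of $\varrho^2$ against the Gaussian second moment of $(\log u)^2$ gives $c_1(y) = \alpha/[2(1-\alpha)D(y)]$; collecting the linear-in-$\varrho$ terms, which also couple to $\Phi'''(u_\ast)$, yields $c_2(y)$ in the stated form, while the $\varrho$-independent leftover defines the analytic function $c_3(y)$.

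The hard part will be the uniform-on-compacts control of the remainder $r^{(n)}_\varrho(y) = O_{u.c.}(n^{-2})$ for $y$ ranging over compact subsets of $\mathbb{C}^+$. This demands (i) deforming the contour of integration onto a steepest-descent path through the saddle and showing that the contributions near $0$ and $+\infty$ are exponentially small uniformly in $y$ on the compact; (ii) checking that $\tau(y)$, and hence the saddle $u_\ast(y)$, stays in a compact set bounded away from $\{0,\infty\}$ while $D(y)$ stays bounded away from $0$, both of which follow from Lemma \ref{lem:xi_Paris} together with the explicit form \eqref{eq:defD} and continuity; and (iii) invoking a uniform version of Watson's lemma for parameter-dependent Laplace integrals, so that the implicit constants in the $O(1/n^2)$ remainder can be taken uniform on any prescribed compact. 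With these uniformity statements in place, the expansion \eqref{Paris_readapted} follows from Paris's general template, and analyticity of all ingredients in $y$ is inherited from that of $\tau$, $D$, and $g$.
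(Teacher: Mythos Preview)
Your outline is sound and would lead to \eqref{Paris_readapted}, but the route differs from the paper's in one organizational respect that is worth making explicit. The paper does \emph{not} redo the saddle-point analysis with a $\varrho$-independent phase. Instead it applies \citet[formula (5.3)]{Par(21)} verbatim, which yields an expansion whose saddle $\tau_n(y):=\xi(a_{n,\varrho}(y))$ depends on $n$ and on $\varrho$ through $a_{n,\varrho}(y)=\frac{1}{\alpha y}-\frac{\varrho+1}{(1-\alpha)ny}$. The whole proof then reduces to Taylor-expanding $\xi$ around $a(y)=\frac{1}{\alpha y}$ (this is where Lemma~\ref{lem:xi_Paris} is actually used), i.e.\ writing $\tau_n(y)=\tau(y)\big[1+\frac{\mathscr{A}_\varrho(y)}{n}+\frac{\mathscr{B}_\varrho(y)}{n^2}+O_{u.c.}(n^{-3})\big]$ and propagating this into the three places $\tau_n$ appears in Paris's formula. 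The $\varrho$-dependence of $\mathcal{R}_\varrho$ is then read off from the explicit $\mathscr{A}_\varrho$, $\mathscr{B}_\varrho$ rather than from Gaussian moments of the amplitude. Finally, the paper works on $(0,+\infty)$ throughout and passes to $\mathbb{C}^+$ only at the very end by analytic continuation, whereas you propose to deform contours and run a uniform Watson argument directly over complex $y$.

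Both approaches are legitimate. Yours is more self-contained and arguably more transparent about where the quadratic-in-$\varrho$ structure of $\mathcal{R}_\varrho$ comes from; the price is that you must justify the steepest-descent deformation and the uniformity of the Laplace remainder on compacts of $\mathbb{C}^+$ from scratch. The paper's approach is considerably shorter because it outsources all of that analytic work to Paris and reduces the proof to a finite Taylor expansion of an analytic function of one variable, followed by a one-line analytic-continuation step. If you keep your version, the place most in need of detail is the uniform $O(n^{-2})$ control for complex $y$: you should either carry out the contour argument carefully or, as the paper does, first establish everything for real $y>0$ (where Paris's estimates are already uniform on compacta) and then invoke the analyticity of $\mathcal{F}^{(n)}_\varrho/\mathcal{P}^{(n)}_\varrho$ on $\mathbb{C}^+$.
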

\begin{proof}
Fix $y \in (0,+\infty)$. Following \citep[Section 5]{Par(21)}, define
\begin{displaymath}
    X(y) : = (1-\alpha) \, ny
\end{displaymath}
and
\begin{displaymath}
    a_{n, \varrho}(y) : = \frac{1}{\alpha y} - \frac{\varrho + 1}{(1-\alpha) ny}.
\end{displaymath}
Notice that $a_{n, \varrho}(y) X(y) = n \frac{1-\alpha}{\alpha} - \varrho -1$. Set 
\begin{displaymath}
    \tau_n(y) := \xi(a_{n, \varrho}(y))\ ,
\end{displaymath}
where $\xi$ denotes the solution to \eqref{tilde_tau}. 
Now, a strict application of  \citep[Formula (5.3)]{Par(21)} yields 
\begin{align}
\label{Paris_readapted2}
\mathcal{F}_\varrho^{(n)}(y) &= \sqrt{\frac{2 \pi}{(1-\alpha) n y}} \, \left[\frac{1}{\alpha \tau_n(y)}  + \frac{1-\alpha}{\alpha} \, \frac{1}{\alpha y} - \frac{\varrho +1}{\alpha n y}\right]^{-\frac{1}{2}} \, 
(\alpha n y)^{n - \frac{\varrho \alpha}{1-\alpha}} \ \times \, \\ 
&\quad\times \left(\frac{1}{\tau_n(y)}\right)^{\varrho} \, \exp\left\{ -n \left[ 1 + \frac{y}{\tau_n(y)} - \frac{1-\alpha}{\alpha} \, \log(\tau_n(y))\right] + \frac{\alpha (\varrho + 1)}{1-\alpha}\right\}\ \times \nonumber \\
&\quad\times \left[1 + \frac{\mathscr{R}_{1}(y)}{n} +O_{u.c.}\left(\frac{1}{n^2}\right)\right] \nonumber
\end{align}
where $\mathscr{R}_{1}$ is a suitable analytical function which does not depend on $\varrho$. At this stage, to prove \eqref{Paris_readapted}, we need further investigations on the quantity $\tau_n(y)$,
obtained by exploiting the regularity of $v \mapsto \xi(x)$ state in Lemma \ref{lem:xi_Paris}.
Accordingly, set $a(y) := (\alpha y)^{-1}$, and notice that $\tau(y) = \xi(a(y))$. Moreover, notice that
\begin{displaymath}
a_{n, \varrho}(y) = a(y) - \left(\frac{\varrho + 1}{(1-\alpha) y}\right) \frac{1}{n},
\end{displaymath}
which shows that $a_{n, \varrho}(y)$ can be regarded as a small increment (as $n \to +\infty$) of $a(y)$. Thus, a Taylor expansion of the function $\xi$ around $a(y)$ yields
\begin{displaymath}
 \xi(a_{n, \varrho}(y)) = \xi(a(y)) \left[1 + \frac{\mathscr{A}_\varrho(  y)}{n} + \frac{\mathscr{B}_\varrho( y)}{n^2} + O_{u.c.}\left(\frac{1}{n^3}\right)\right],
\end{displaymath}
where
\begin{displaymath}
    \mathscr{A}_\varrho( y) := - \frac{\alpha (\varrho +1)}{(1-\alpha) D(y) }
\end{displaymath}
and
\begin{displaymath}
    \mathscr{B}_\varrho( y) := - \mathscr{A}^2_\varrho( y) \, \frac{D(y) \, (1-3\alpha) + 1-\alpha }{2 \alpha \, D(y) }
\end{displaymath}
with $D(y)$ as in \eqref{eq:defD}. Three consequences of the above Taylor expansion are:
\begin{itemize}
\item[i)]
\begin{displaymath}
    \left(\frac{1}{\tau_n(y)}\right)^{\varrho} =  \left(\frac{1}{\tau(y)}\right)^{\varrho} \, \left[1 + \frac{1}{n} \, \mathscr{R}_{\varrho, 2}(y) + O_{u.c.} \left(\frac{1}{n^2}\right)\right]
\end{displaymath}
\item[ii)]
\begin{align*}
    &\left[\frac{1}{\alpha \tau_n(y)}  + \frac{1-\alpha}{\alpha} \, \frac{1}{\alpha y} - \frac{\varrho +1}{\alpha n y}\right]^{-\frac{1}{2}} \\
    &\quad =   \left[\frac{1}{\alpha \tau(y)}  + \frac{1-\alpha}{\alpha} \, \frac{1}{\alpha y} \right]^{-\frac{1}{2}} \, \left[1 + \frac{1}{n} \, \mathscr{R}_{\varrho, 3}(y) + O_{u.c.} \left(\frac{1}{n^2}\right)\right]
\end{align*}
\item[iii)]
\begin{align*}
   &\exp\left\{ -n \left[ 1 + \frac{y}{\tau_n(y)} - \frac{1-\alpha}{\alpha} \, \log(\tau_n(y))\right] + \frac{\alpha (\varrho + 1)}{1-\alpha}\right\}  \\
    &\quad = \exp\left\{ -n \left[ 1 + \frac{y}{\tau(y)} - \frac{1-\alpha}{\alpha} \, \log(\tau(y))\right] \right\} \, \left[1 + \frac{1}{n} \, \mathscr{R}_{\varrho, 4}(y)  + O_{u.c.} \left(\frac{1}{n^2}\right)\right],
\end{align*}
\end{itemize}
with
\begin{align*}
    \mathscr{R}_{\varrho, 2}(y) &:= \varrho \, \mathscr{A}_{\varrho}(y) \\
    \mathscr{R}_{\varrho, 3}(y) &:= \frac{1}{2} \, \left(\frac{\mathscr{A}_\varrho(y)}{\alpha \tau(y)}  + \frac{\varrho+1}{\alpha y}\right) \, \frac{\alpha y}{D(y)} \\
    \mathscr{R}_{\varrho, 4}(y) &:= \frac{y}{\tau(y)} \left[\mathscr{A}_\varrho(y)^2 - \mathscr{B}_\varrho(y) \right].
\end{align*}
At this stage, identity \eqref{Paris_readapted} follows by just plugging the above expansions i)-ii)-iii) into \eqref{Paris_readapted2} and patiently rearranging the terms. Finally, after establishing \eqref{Paris_readapted} 
for $y \in (0,+\infty)$, we conclude that it must hold also for $y \in \mathbb{C}^+$ by the uniqueness of the analytic continuation. 
\end{proof}

The following lemma states a basic calculus result, that 
guarantees the existence of a continuous function bounding from above any uniformly convergent sequence of functions on a compact set. 
Being unable to find a precise statement on some calculus textbook, we include here
both statement and proof, to maintain the paper self-contained.
\begin{lem}
    \label{lem: sup is continuous}
    Let $\mathbb{K} \subseteq \mathbb{R}^d$ (or $\mathbb{C}$) be a compact set. Consider a sequence of continuous functions $\{f_n\}_{n \in \mathbb{N}}$, $f_n : \mathbb{K} \to \mathbb{R}$ such that, as $n \to +\infty$, $f_n \rightarrow f$ uniformly for some (continuous) function $f:\mathbb{K} \to \mathbb{R}$. For each $x \in \mathbb{K}$, define $S(x) : = \sup_{n \in \mathbb{N}} \left|f_n(x) \right|$. Then $S:\mathbb{K} \to \mathbb{R}$ is continuous.
\end{lem}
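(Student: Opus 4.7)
The plan is to reduce $S$ to a small-error approximation by a continuous function built from finitely many of the $f_n$'s together with the limit $f$.

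\textbf{Step 1: Uniform approximation.} Fix $\varepsilon>0$. By uniform convergence of $f_n$ to $f$ on $\mathbb{K}$, there exists $N=N(\varepsilon)\in\mathbb{N}$ such that $|f_n(x)-f(x)|<\varepsilon/4$ for every $x\in\mathbb{K}$ and every $n\geq N$. In particular, $\left||f_n(x)|-|f(x)|\right|<\varepsilon/4$ for such $n$ and $x$.

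\textbf{Step 2: Construct an auxiliary continuous function.} Define
\[
M_\varepsilon(x) := \max\bigl\{|f_1(x)|,\ldots,|f_N(x)|,\,|f(x)|\bigr\}.
\]
Since $f$ and each $f_n$ are continuous on $\mathbb{K}$, the functions $|f|$ and $|f_n|$ are continuous, and $M_\varepsilon$ is a maximum of finitely many continuous functions, hence continuous on $\mathbb{K}$.

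\textbf{Step 3: Sandwich $S$ between $M_\varepsilon$ and $M_\varepsilon+\varepsilon/4$.} On the one hand, $M_\varepsilon(x)\leq S(x)$ for every $x\in\mathbb{K}$: the bound $|f_n(x)|\leq S(x)$ for $n=1,\ldots,N$ is immediate, while $|f(x)|\leq S(x)$ follows by letting $n\to+\infty$ in the pointwise convergence $|f_n(x)|\to|f(x)|$ combined with $|f_n(x)|\leq S(x)$. On the other hand, for $n<N$ one has $|f_n(x)|\leq M_\varepsilon(x)$ directly, and for $n\geq N$ the uniform estimate from Step 1 gives $|f_n(x)|\leq|f(x)|+\varepsilon/4\leq M_\varepsilon(x)+\varepsilon/4$. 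Taking the supremum over $n\in\mathbb{N}$ yields $S(x)\leq M_\varepsilon(x)+\varepsilon/4$. Altogether,
\[
0\leq S(x)-M_\varepsilon(x)\leq \varepsilon/4 \qquad \text{for every } x\in\mathbb{K}.
\]

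\textbf{Step 4: Conclude continuity of $S$.} Fix $x_0\in\mathbb{K}$. By continuity of $M_\varepsilon$ there exists $\delta=\delta(\varepsilon,x_0)>0$ such that $|M_\varepsilon(x)-M_\varepsilon(x_0)|<\varepsilon/2$ whenever $|x-x_0|<\delta$. A triangle inequality argument then gives, for any such $x$,
\[
|S(x)-S(x_0)|\leq |S(x)-M_\varepsilon(x)|+|M_\varepsilon(x)-M_\varepsilon(x_0)|+|M_\varepsilon(x_0)-S(x_0)|\leq \tfrac{\varepsilon}{4}+\tfrac{\varepsilon}{2}+\tfrac{\varepsilon}{4}=\varepsilon,
\]
which proves continuity of $S$ at $x_0$. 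Since $x_0$ was arbitrary, $S$ is continuous on $\mathbb{K}$.

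There is no genuine obstacle here: the only subtlety is recognising that $\sup_{n\geq N}|f_n(x)|$ can be controlled \emph{continuously} via $|f(x)|$ thanks to the uniform estimate, so that the "tail" part of the supremum does not spoil continuity. Compactness of $\mathbb{K}$ is used only implicitly through the uniform convergence hypothesis and it could actually be dropped; the argument gives pointwise continuity of $S$ on the domain of any uniformly convergent sequence of continuous functions.
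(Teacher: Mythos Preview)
Your proof is correct and takes a genuinely different route from the paper's. The paper first establishes an auxiliary lemma that $x \mapsto \sup_{y \in \mathbb{K}_2} F(x,y)$ is continuous whenever $F$ is continuous on a product of compacts, and then applies it by taking $\mathbb{K}_2 = \mathbb{N}^\ast = \mathbb{N} \cup \{\infty\}$ (the Alexandroff compactification of $\mathbb{N}$) and $F(x,n) = |f_n(x)|$, $F(x,\infty) = |f(x)|$; uniform convergence of $f_n$ to $f$ is exactly what makes this $F$ continuous at the added point $\infty$. Your argument instead shows directly that $S$ is a uniform limit of the continuous functions $M_\varepsilon$, by splitting the supremum into a finite head (handled as a max of finitely many continuous functions) and a tail controlled uniformly by $|f|$. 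Your route is more elementary and, as you observe, makes transparent that compactness of $\mathbb{K}$ plays no role beyond the hypothesis of uniform convergence. The paper's route is more conceptual and situates the result within a general picture (continuity of parametric maxima over compact index sets), at the cost of introducing the compactification machinery.
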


To prove Lemma \ref{lem: sup is continuous}, we first need the following lemma. 
\begin{lem}\label{lem_aux_sup}
For $i=1, 2$, let $(\mathbb{M}_i, d_{\mathbb{M}_i})$ be metric spaces, endowed with the metric topology. Let $\mathbb{K}_i \subseteq \mathbb{M}_i$ be compact subsets, and let $F: \mathbb{K}_1 \times \mathbb{K}_2 \to \mathbb{R}$ be a continuous function on the product space. Then, $G: \mathbb{K}_1 \to \mathbb{R}$ defined by $G(x)  = \sup_{y \in \mathbb{K}_2} F(x, y)$ is continuous. 
\end{lem}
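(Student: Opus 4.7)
The plan is to exploit the compactness of the product space $\mathbb{K}_1 \times \mathbb{K}_2$ together with the continuity of $F$, via uniform continuity. A standard result in metric-space topology says that the product of two compact metric spaces is compact (with respect to the product metric, say $d((x,y),(x',y')) := d_{\mathbb{M}_1}(x,x') + d_{\mathbb{M}_2}(y,y')$). Since $F$ is continuous on this compact set, the Heine--Cantor theorem ensures that $F$ is uniformly continuous on $\mathbb{K}_1 \times \mathbb{K}_2$.

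Given uniform continuity, for every $\varepsilon > 0$ I would pick $\delta > 0$ such that $d_{\mathbb{M}_1}(x,x') < \delta$ implies $|F(x,y) - F(x',y)| < \varepsilon$ uniformly in $y \in \mathbb{K}_2$. Taking suprema over $y$ on both sides, and using the elementary fact that $|\sup_y \varphi(y) - \sup_y \psi(y)| \leq \sup_y |\varphi(y) - \psi(y)|$ (valid whenever both suprema are finite, which here follows from compactness of $\mathbb{K}_2$ and continuity of $F(x,\cdot)$, so the suprema are actually attained as maxima), one deduces $|G(x) - G(x')| \leq \varepsilon$. Hence $G$ is (uniformly) continuous on $\mathbb{K}_1$.

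The main potential obstacle is purely formal, namely verifying the inequality between suprema, but this is a one-line argument: from $F(x,y) \leq F(x',y) + \varepsilon \leq G(x') + \varepsilon$ one gets $G(x) \leq G(x') + \varepsilon$, and by symmetry $G(x') \leq G(x) + \varepsilon$. Everything else is a direct invocation of Heine--Cantor, so no delicate estimates are required.
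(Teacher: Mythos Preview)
Your proof is correct and actually cleaner than the paper's. The paper proceeds sequentially: given $x_n \to \bar{x}$, it first shows $G(\bar{x}) \leq \liminf_n G(x_n)$ by picking a maximiser $y^\star$ of $F(\bar{x},\cdot)$ and using $G(x_n) \geq F(x_n, y^\star) \to F(\bar{x}, y^\star) = G(\bar{x})$; then it shows $\limsup_n G(x_n) \leq G(\bar{x})$ by contradiction, extracting for each $x_{n_j}$ a maximiser $y_j$, passing to a convergent subsequence $y_{j_k} \to \bar{y}$ via compactness of $\mathbb{K}_2$, and using continuity of $F$ to reach $F(\bar{x},\bar{y}) \leq G(\bar{x})$.

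Your route via Heine--Cantor is more direct: it avoids the subsequence extraction and the contradiction argument entirely, and as a bonus it yields \emph{uniform} continuity of $G$ rather than mere continuity. The paper's argument, on the other hand, uses compactness of $\mathbb{K}_2$ only (to guarantee that suprema are attained and to extract the convergent subsequence), so in principle it would survive if $\mathbb{K}_1$ were merely a metric space without compactness; your argument needs compactness of both factors to invoke Heine--Cantor on the product. Since in the paper both $\mathbb{K}_i$ are compact anyway, this extra generality is not exploited, and your approach is the more economical one.
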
 

\begin{proof}[Proof of lemma \ref{lem_aux_sup}]
Consider a convergent sequence $\{x_n\}_{n \in \N} \subseteq \mathbb{K}_1$ with $x_n \to \bar{x} \in \mathbb{K}_1$. We prove that 
\begin{displaymath}
G(\bar{x}) \le \liminf_{n \to +\infty} G(x_n) \le \limsup_{n \to +\infty} G(x_n) \le G(\bar{x}).
\end{displaymath}
Since $\mathbb{K}_2$ is compact and the real-valued function $y \mapsto F(\bar{x}, y)$ on $\mathbb{K}_2$ is continuous, there exists $y^\star \in \mathbb{K}_2$ such that
\begin{displaymath}
G(\bar{x}) = \sup_{y \in \mathbb{K}_2} F(\bar{x}, y) = F(\bar{x}, y^\star).
\end{displaymath}
By definition, $G(x_n) \ge F(x_n, y^\star)$ for every $n \in \N$. Furthermore, the real-valued function $x \mapsto F(x, y^\star)$ on $\mathbb{K}_1$ is continuous. Whence,
\begin{displaymath}
\liminf_{n \to +\infty} G(x_n) \ge \liminf_{n \to +\infty} F(x_n, y^\star) =  F(\bar{x}, y^\star) = G(\bar{x}).
\end{displaymath}
We prove the other inequality by contradiction. Assume there exists a subsequence $\{x_{n_j}\}_{j \in \N}$ of $\{x_n\}$ such that
\begin{displaymath}
G(x_{n_j}) \rightarrow L > G(\bar{x}).
\end{displaymath}
For each $j \in \N$, since the real-valued function $y \mapsto F(x_{n_j}, y)$ on $\mathbb{K}_2$ is continuous, there exists $y_j \in \mathbb{K}_2$ such that 
\begin{displaymath}
G(x_{n_j}) = \sup_{y \in \mathbb{K}_2} F(x_{n_j}, y) = F(x_{n_j}, y_j).
\end{displaymath}
Now, consider the sequence $\{y_j\}_{j \in \N} \subseteq \mathbb{K}_2$. By sequential compactness, there exists a subsequence $\{y_{j_k}\}_{k \in \N}$ converging to some point $\bar{y} \in \mathbb{K}_2$. Hence, by continuity of $F$, 
\begin{displaymath}
G(x_{n_{j_k}}) = F(x_{n_{j_k}}, y_{j_k}) \rightarrow F(\bar{x}, \bar{y}) \le G(\bar{x})
\end{displaymath}
which produces the contradiction. 
\end{proof}

\begin{proof}[Proof of lemma \ref{lem: sup is continuous}]
Let $\N^* = \N \cup \{\infty\}$  denote the Alexandroff compactification of $\N$ with the discrete topology. Further, define $F: \mathbb{K} \times \N^* \to [0, +\infty)$, 
\begin{displaymath}
F(x, n) = \begin{cases} | f_n(x) |& \text{ if } n \in \N \\[0.4cm]
|f(x)|& \text{ if } n = \infty.
\end{cases}
\end{displaymath}
If $F$ is continuous, then Lemma \ref{lem_aux_sup} entails that the function $G:\mathbb{K} \to \mathbb[0, +\infty)$, defined by
$$ G(x) := \sup_{n \in \N^*} F(x, n)=  \max\{ \sup_{n \in \N}  | f_n(x) |,  | f(x)|\} = \max\{ S(x),  |f(x)|\} $$
is continuous. Note that, for every $x \in \mathbb{K}$, $S(x)$ is either equal to the limit of $|f_n(x)|$ or it is a maximum, so that $S(x) \ge |f(x)|$. This implies that $S=G$, and that $S$ is continuous. 
It remains to prove that $F$ is continuous. Consider a convergent sequence $\{(x_k, n_k)\}_{k \in \N} \subseteq \mathbb{K} \times \N^*$, with $(x_k, n_k) \to (\bar{x}, \bar{n})$. We treat two cases separately:
\begin{itemize}
\item[i)] if $\bar{n} \in \N$, then the sequence $\{n_k\}$ is eventually constant with $n_k = \bar{n}$. Hence, by continuity of $f_{\bar{n}}$, 
\begin{displaymath}
F(x_k, n_k) = f_{n_k}(x_k) = f_{\bar{n}}(x_k) \rightarrow f_{\bar{n}}(\bar{x}) = F(\bar{x}, \bar{n});
\end{displaymath}
\item[ii)] if $\bar{n} = \infty$, then by uniform convergence of $\{f_n\}_{n \geq 1}$,
\begin{displaymath} 
F(x_k, n_k) = f_{n_k}(x_k) \rightarrow f(\bar{x})= F(\bar{x}, \infty).
\end{displaymath}
\end{itemize}
\end{proof}

\end{paragraph}

\begin{paragraph}{\underline{Step 2} of the proof of the Berry-Esseen inequality \eqref{eq:be_rn}} The following proposition connects the quantities $\mu(z)$ and $\sigma^2(z)$ that appear in Proposition \ref{prop:clt_R}
to the mean and variance of $R_n(z)$, respectively.
\begin{prp} As $z$ varies in $(0,+\infty)$ and $n \to \infty$, one has
    \label{prop: mu_sigma_Rn}
    \begin{displaymath}
        \mathbb{E}[R_n(z)] = n \mu(z) + O_{u.c.}(1)
    \end{displaymath}
    and
    \begin{displaymath}
        \operatorname{Var}(R_n(z)) = n \sigma^2(z) + O_{u.c.}(1).
    \end{displaymath}
\end{prp}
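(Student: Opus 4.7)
The starting observation is that, since $R_n(z) = R(\alpha,n,nz)$ has the distribution \eqref{dist_cha}, its probability generating function is
$$G_{R_n(z)}(s) = \frac{H_n(nzs)}{H_n(nz)}, \qquad H_n(w) := \sum_{k=1}^n \mathscr{C}(n,k;\alpha)\, w^k,$$
and hence the first two moments can be written as logarithmic derivatives:
$$\mathbb{E}[R_n(z)] = z\,\frac{d}{dz}\log H_n(nz), \qquad \operatorname{Var}(R_n(z)) = z^2\frac{d^2}{dz^2}\log H_n(nz) + \mathbb{E}[R_n(z)].$$
Thus the proof reduces to an asymptotic expansion of $\log H_n(nz)$ in $z$, together with its first two derivatives, uniformly on compact subsets of $(0,+\infty)$.

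My plan to produce this expansion is to obtain an integral representation of $H_n(w)$ matching the form of $J^{(n)}_{\varepsilon,\delta}(y)$ in Lemma \ref{lem:Jed}. The compound Poisson construction of $K_n$, in the spirit of \citet{Cha(07),Dol(20),Dol(21)}, yields such a representation with the positive $\alpha$-stable density $f_\alpha$ as kernel. Substituting the Zolotarev surrogate $E_\alpha$ via Lemma \ref{lem:Zol} introduces a multiplicative error of order $1 + O(1/n)$, uniformly on compact subsets of the $w$-variable, and recasts the leading integral exactly in the form of $J^{(n)}_{\varepsilon,\delta}(y)$ for a specific choice of $(\varepsilon,\delta)$ and of the mapping $z \mapsto y(z)$. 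Applying Lemma \ref{lem:Jed} and the Paris-type expansion of Lemma \ref{lem:paris}, one obtains
$$\log H_n(nz) = n\,\Phi(z) + \Psi_n(z) + O_{u.c.}(n^{-1}),$$
where $\Phi$ is an explicit analytic function of $z$ built from $\tau(y(z))$, $g(y(z))$ and a logarithmic prefactor, and $\Psi_n(z)$ gathers the contributions of at most logarithmic-in-$n$ order, all linear or constant in $z$.

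Because $\tau$ extends analytically to $\mathbb{C}^+$ by Lemma \ref{lem:xi_Paris} and the remainder in Lemma \ref{lem:paris} is of the form $r^{(n)}_\varrho = O_{u.c.}(n^{-2})$ as an analytic function, Cauchy's integral formula allows one to differentiate the asymptotic expansion term by term, keeping the remainder of $O_{u.c.}$ type. Differentiating once and twice in $z$ yields
$$\frac{d}{dz}\log H_n(nz) = n\Phi'(z) + O_{u.c.}(1), \qquad \frac{d^2}{dz^2}\log H_n(nz) = n\Phi''(z) + O_{u.c.}(1),$$
and plugging these into the first step gives $\mathbb{E}[R_n(z)] = n\cdot z\Phi'(z) + O_{u.c.}(1)$ together with $\operatorname{Var}(R_n(z)) = n\cdot [z^2\Phi''(z) + z\Phi'(z)] + O_{u.c.}(1)$. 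An algebraic verification, relying on differentiating the defining relation \eqref{eq:tauParis} to get $\tau'(z) = -\tau(z)/[zD(z)]$ as in Proposition \ref{obs:limits_are_coherent}, then identifies $z\Phi'(z)$ with $\mu(z) = z(1-1/\tau(z))$ and $z^2\Phi''(z) + z\Phi'(z)$ with $\sigma^2(z)$.

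The main obstacle is twofold. First, obtaining the precise integral representation of $H_n(w)$ and matching it to $J^{(n)}_{\varepsilon,\delta}$ with the correct $(\varepsilon,\delta,y(z))$ requires careful bookkeeping of exponents and normalizing constants; small discrepancies here propagate into the signs of $\Phi'(z)$ and would produce the wrong answer. Second, legitimating the termwise differentiation of the $O_{u.c.}$ remainders demands working in the complex domain $\mathbb{C}^+$ (which is why Lemma \ref{lem:xi_Paris} and the analyticity claims in Lemma \ref{lem:paris} are essential) and appealing to Cauchy's formula on compact complex neighborhoods of each real $z>0$; Lemma \ref{lem: sup is continuous} is then used to package the resulting error bounds into the uniform statement $O_{u.c.}(1)$.
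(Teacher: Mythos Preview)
Your proposal is correct and uses the same analytic machinery as the paper (the integral representation via $f_\alpha$, the Zolotarev surrogate of Lemma~\ref{lem:Zol}, the reduction to $J^{(n)}_{\varepsilon,\delta}$ via Lemma~\ref{lem:Jed}, and the Paris expansion of Lemma~\ref{lem:paris}). The difference is in the order of operations. The paper differentiates the generating function in $s$ \emph{first}, which produces integrals with higher powers of $x$ in the integrand (the $\mathcal{J}_n^{(1)}$ and $\mathcal{J}_n^{(2)}$, corresponding to $J^{(n)}_{\varepsilon,\delta}$ with $\varepsilon=1,2$), and then expands each of these separately; this yields $G'_{R_n(z)}(1)=n\mathcal A(z)+\mathcal B(z)+O_{u.c.}(n^{-1})$ and $G''_{R_n(z)}(1)=n^2\mathcal C(z)+n\mathcal D(z)+O_{u.c.}(1)$ directly, with an algebraic check that $\mathcal A=\mu$, $\mathcal C=\mu^2$, and $\mathcal D+\mu-2\mu\mathcal B=\sigma^2$. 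Your route expands $\log H_n(nz)$ once and then differentiates via Cauchy's formula, which is more economical (one instance of Lemma~\ref{lem:paris} rather than several) but transfers the burden to justifying termwise differentiation of the $O_{u.c.}$ remainder in a complex neighborhood; the paper's ``differentiate first'' device sidesteps this entirely. Both lead to the same identifications $z\Phi'(z)=\mu(z)$ and $z^2\Phi''(z)+z\Phi'(z)=\sigma^2(z)$.
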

\begin{proof}
First, exploit \citet[equation (12)]{Dol(20)} to obtain the expression of the probability generating function of $R_n(z)$: 
\begin{align}
\label{eq: Grn}
G_{R_n(z)}(s) &= \frac{\sum_{k = 1}^ n  \mathscr{C}(n, k;  \alpha) \, (nsz)^k}{\sum_{k = 1}^ n  \mathscr{C}(n, k;  \alpha) \, (nz)^k}\\
   \nonumber & = e^{nz (s-1) }\, s^\frac{n}{\alpha} \frac{\int_{0}^{+\infty} x^n \, \exp\left\{-x (nsz)^\frac{1}{\alpha} \right\} \, f_\alpha (x) \, \mathrm{d}{x}}{\int_{0}^{+\infty} x^n \, \exp\left\{-x (nz)^\frac{1}{\alpha} \right\} \, f_\alpha (x) \, \mathrm{d}{x}}\\
 \nonumber  & = e^{nz (s-1) }\, s^\frac{n}{\alpha} \, \frac{I_n(zs)}{I_n(z)}\ ,
\end{align}
where $s$ can be considered as a real variable and, for any $y > 0$, 
\begin{displaymath}
I_n(y) := \int_{0}^{+\infty} x^n \, \exp\left\{-x (ny)^\frac{1}{\alpha} \right\} \, f_\alpha (x) \, \mathrm{d}{x}\ .
 \end{displaymath}
Whence,
\begin{displaymath}
    I_n'(y) = -\frac{n^\frac{1}{\alpha}}{\alpha} y^\frac{1-\alpha}{\alpha} \int_0^{+\infty} x^{n+1} \, \exp\left\{-x(ny)^\frac{1}{\alpha}\right\} \, f_\alpha(x) \, \mathrm{d}x =: -\frac{n^\frac{1}{\alpha}}{\alpha} \, y^\frac{1-\alpha}{\alpha}\,  \mathcal{J}_n^{(1)}(y)
\end{displaymath}
and
\begin{align*}
    I_n''(y) &= \frac{1-\alpha}{\alpha} \, \frac{1}{y} \,  I_n'(y) + \frac{n^{2\alpha}}{\alpha^2}\, y^\frac{2(1-\alpha)}{\alpha}  \int_0^{+\infty} x^{n+2} \, \exp\left\{-x(ny)^\frac{1}{\alpha}\right\} \, f_\alpha(x) \, \mathrm{d}x \\
    & =: \frac{1-\alpha}{\alpha} \, \frac{1}{y}\,  I_n'(y) + \frac{n^{2\alpha}}{\alpha^2}\, y^\frac{2(1-\alpha)}{\alpha} \,  \mathcal{J}_n^{(2)}(y).
\end{align*}
Differentiation and evaluation at $s=1$ of \eqref{eq: Grn} then yield
    \begin{equation}
    \label{eq:Gnprime}
        G'_{R_n(z)}(1) = nz + \frac{n}{\alpha } + z\,  \frac{I_n'(z)}{I_n(z)} = nz + \frac{n}{\alpha }   -  \frac{(nz)^\frac{1}{\alpha}}{\alpha} \,   \frac{\mathcal{J}_n^{(1)}(z)}{I_n(z)}
    \end{equation}
    and
    \begin{align}
    \label{eq:Gnsecond}
         G''_{R_n(z)}(1) &= \left(nz + \frac{n}{\alpha }\right)^2 - \frac{n}{\alpha} + 2 \left(nz + \frac{n}{\alpha }\right)\, z\,  \frac{I_n'(z)}{I_n(z)} + z^2\,  \frac{I_n''(z)}{I_n(z)} \\
         & = \left(nz + \frac{n}{\alpha }\right)^2 - \frac{n}{\alpha} + \left[2n \left(z + \frac{1}{\alpha }\right) + \frac{1-\alpha}{\alpha} \right] \times \nonumber\\
         &\quad\times \left[G'_{R_n(z)}(1) - nz - \frac{n}{\alpha}\right]  + \frac{(nz)^{2\alpha}}{\alpha^2}\,    \frac{\mathcal{J}_n^{(2)}(z)}{I_n(z)}. \nonumber
    \end{align}
Thanks to the approximation results given in Lemma \ref{lem:Zol}, we can utilize the function $E_{\alpha}$ therein to get the identities 
\begin{displaymath}
    \frac{(nz)^\frac{1}{\alpha}}{\alpha} \,   \frac{\mathcal{J}_n^{(1)}(z)}{I_n(z)} =   \frac{(nz)^\frac{1}{\alpha}}{\alpha}   \frac{ J_{1, 0}^{(n)}(z) + Q_\alpha J_{1, 1}^{(n)}(z) +  R^{(n)}_{1}(z) /\chi_\alpha}{J_{0, 0}^{(n)}(z) + Q_\alpha J_{0, 1}^{(n)}(z) +  R^{(n)}_{0}(z) /\chi_\alpha}
\end{displaymath}
and
\begin{displaymath}
    \frac{(nz)^\frac{2}{\alpha}}{\alpha^2} \,   \frac{\mathcal{J}_n^{(2)}(z)}{I_n(z)} =   \frac{(nz)^\frac{1}{\alpha}}{\alpha}   \frac{ J_{2, 0}^{(n)}(z) + Q_\alpha J_{2, 1}^{(n)}(z) +  R^{(n)}_{2}(z) /\chi_\alpha}{J_{0, 0}^{(n)}(z) + Q_\alpha J_{0, 1}^{(n)}(z) +  R^{(n)}_{0}(z) /\chi_\alpha},
\end{displaymath}
where $Q_\alpha$ is the same as in \eqref{eq:Zol_bound_2}, $\chi_\alpha$ is a positive constant depending only on $\alpha$, and the $J_{\varepsilon, \delta}^{(n)}$'s are as in \eqref{eq: def_prel_J}. Moreover, for $j = 0, 1, 2$, define
\begin{displaymath}
    R^{(n)}_{j}(z)  := \int_0^{+\infty} x^{n+j} \, \exp\left\{-x(nz)^\frac{1}{\alpha}\right\} \, E_\alpha(x) \left[ -1 - Q_\alpha x^\frac{\alpha}{1-\alpha}  + \frac{f_\alpha(x)}{E_\alpha(x)}\right] \, \mathrm{d}x.
\end{displaymath}
These quantities represent remainders terms that satisfy
\begin{equation}
\label{eq: Zol_resto}
    \left|\frac{R^{(n)}_{j}(z) }{\chi_\alpha}\right| \le \mathcal{K}_\alpha J_{j, 2}^{(n)}( z)
\end{equation}
for some positive constant $\mathcal{K}_\alpha$. Now, to make some simplifications, set
\begin{displaymath}
    \Psi(\alpha, n, z): = \left(\alpha n y \right)^{n} \, \sqrt{\frac{2\pi \alpha }{n(1-\alpha) D(z) }}
\end{displaymath}
with the same $D(z)$ as in \eqref{eq:defD}. Then, for $j = 0, 1, 2$, define
\begin{displaymath}
    \mathfrak{R}^{(n)}_{j}(z)  := \frac{R^{(n)}_{j}(z)  \, \alpha^{\frac{\alpha}{2(1-\alpha)}-j} (nz)^\frac{\alpha}{2(1-\alpha)} \sqrt{\tau(z)}}{\chi_\alpha\,  \mathcal{C}(\alpha, n, z) \, \Psi(\alpha, n, z)}
\end{displaymath}
where $\tau(z)$ denotes the unique real, positive solution to equation \eqref{eq:tauParis}, while $\mathcal{C}(\alpha, n, z)$ is the same as in \eqref{Ccal_alphany}. 
Combining Lemmata \ref{lem: prelim J} and \ref{lem:paris} with \eqref{eq:tauParis} allows to write, after some straightforward algebraic manipulations, the identities
\begin{displaymath}
\frac{(nz)^\frac{1}{\alpha}}{\alpha} \,   \frac{\mathcal{J}_n^{(1)}(z)}{I_n(z)} = \frac{N_1(z,n)}{1+\varepsilon(z,n)} \qquad \text{and}\qquad \frac{(nz)^\frac{2}{\alpha}}{\alpha^2} \,   \frac{\mathcal{J}_n^{(2)}(z)}{I_n(z)} = \frac{N_2(z,n)}{1+\varepsilon(z,n)},
\end{displaymath}
where:
\begin{align*}
\varepsilon(z,n) & := \frac{1}{n} R_{\varrho(0, 0)} + r^{(n)}_{\varrho(0, 0)} +\frac{1}{n} \,  Q_\alpha \alpha^{\frac{\alpha}{1-\alpha} } \, \frac{\tau(z)}{z}  \left[ 1 + \frac{1}{n} R_{\varrho(0, 1)} + r^{(n)}_{\varrho(0, 1)} \right]+ 
      \mathfrak{R}^{(n)}_{0} \\
N_1(z,n) & :=  n \left(\frac{1}{\alpha} + \frac{z}{\tau(z)}\right) \left[ 1 + \frac{1}{n} R_{\varrho(1, 0)} + r^{(n)}_{\varrho(1, 0)} \right] \\
     &\quad+\  Q_\alpha \alpha^\frac{\alpha}{1-\alpha} \frac{\tau(z)}{z} \left(\frac{1}{\alpha} + \frac{z}{\tau(z)}\right) \left[ 1 + \frac{1}{n} R_{\varrho(1, 1)} + r^{(n)}_{\varrho(1, 1)} \right]+ \mathfrak{R}^{(n)}_{1}\\
N_2(z,n) & := n^2 \left(\frac{1}{\alpha} + \frac{z}{\tau(z)}\right)^2 \left[ 1 + \frac{1}{n} R_{\varrho(2, 0)} + r^{(n)}_{\varrho(2, 0)} \right] \\
&\quad+ n \, Q_\alpha \alpha^\frac{\alpha}{1-\alpha} \frac{\tau(z)}{z} \left(\frac{1}{\alpha} + \frac{z}{\tau(z)}\right)^2 \left[ 1 + \frac{1}{n} R_{\varrho(2, 1)} + r^{(n)}_{\varrho(2, 1)} \right]+ \mathfrak{R}^{(n)}_{2}.
     \end{align*}    
Now, consider the straightforward identity 
\begin{displaymath}
    \frac{1}{1+\varepsilon(z,n)} = 1-\varepsilon(z,n) + \frac{\varepsilon(z,n)^2}{1+\varepsilon(z,n)}.
\end{displaymath}
The bound \eqref{eq: Zol_resto} and Lemma \ref{lem:paris} guarantee that
\begin{displaymath}
    \frac{\varepsilon^2(z,n)}{1+\varepsilon(z,n)} = O_{u.c.}\left(\frac{1}{n^2}\right).
\end{displaymath}
Thus, rearranging of the terms and highlighting only the first orders in the asymptotical expansion in $n$, we can write
\begin{align*}
    & \frac{(nz)^\frac{1}{\alpha}}{\alpha}   \,   \frac{\mathcal{J}_n^{(1)}(z)}{I_n(z)} \\
     & = N_1(z,n) - \varepsilon(z,n) N_1(z,n) + N_1(z,n) \frac{\varepsilon^2(z,n)}{1+\varepsilon(z,n)} \\ 
     & = n \left[\frac{1}{\alpha} + \frac{z}{\tau(z)} \right] + \left[\left(\frac{1}{\alpha} + \frac{z}{\tau(z)}\right)\left(R_{\varrho(1, 0)} - R_{\varrho(0, 0)}\right) \right] + O_{u.c.}\left(\frac{1}{n}\right)\\
     & = n \left[\frac{1}{\alpha} + \frac{z}{\tau(z)} \right] + \left[ \frac{1-\alpha}{\alpha }\left(\frac{1}{\alpha} + \frac{z}{\tau(z)}\right) \left(\frac{1-2\alpha}{\alpha} c_1(z) - c_2(z)\right) \right] + O_{u.c.}\left(\frac{1}{n}\right)
\end{align*}
and
\begin{align*}
     & \frac{(nz)^\frac{2}{\alpha}}{\alpha^2}   \,   \frac{\mathcal{J}_n^{(2)}(z)}{I_n(z)} \\
     & =  N_2(z,n) - \varepsilon(z,n) N_2(z,n) + N_2(z,n) \frac{\varepsilon(z,n)^2}{1+\varepsilon(z,n)} \\
    & = n^2 \left[\left(  \frac{1}{\alpha} + \frac{z}{\tau(z)} \right)^2\right]  +  n \left[ \left(  \frac{1}{\alpha} + \frac{z}{\tau(z)} \right)^2  \left(R_{\varrho(2, 0)} - R_{\varrho(0, 0)}\right) \right] + O_{u.c.}(1)\\
     & = n^2 \left[\left(  \frac{1}{\alpha} + \frac{z}{\tau(z)} \right)^2\right]  +  n \left[2 \frac{1-\alpha}{\alpha } \left(  \frac{1}{\alpha} + \frac{z}{\tau(z)} \right)^2  \left(\frac{2-3\alpha}{\alpha} c_1(z) - c_2(z)\right) \right] + O_{u.c.}(1).
\end{align*}
Finally, substitute the above expansions in the expressions \eqref{eq:Gnprime} and \eqref{eq:Gnsecond} $G'_{R_n(z)}(1)$ and $G''_{R_n(z)}(1)$, respectively, to obtain
\begin{displaymath}
    G'_{R_n(z)}(1) = n\,  \mathcal{A}(z) + \mathcal{B}(z) + O_{u.c.}\left(\frac{1}{n}\right)
\end{displaymath}
and
\begin{displaymath}
    G''_{R_n(z)}(1) = n^2 \, \mathcal{C}(z) + n\, \mathcal{D}(z) + O_{u.c.}(1)
\end{displaymath}
where:
\begin{align*}
\mathcal{A}(z) &:=  z - \frac{z}{\tau(z)} \\
\mathcal{B}(z) &:= -  \frac{1-\alpha}{\alpha }\left(\frac{1}{\alpha} + \frac{z}{\tau(z)}\right) \left(\frac{1-2\alpha}{\alpha} c_1(z) - c_2(z)\right) \\
\mathcal{C} (z) &: = \left(z + \frac{1}{\alpha}\right)^2 - 2\left(z + \frac{1}{\alpha}\right) \left(\frac{z}{\tau(z)} + \frac{1}{\alpha}\right) + \left( \frac{1}{\alpha} + \frac{z}{\tau(z)}\right)^2  = \left(z - \frac{z}{\tau(z)}\right)^2\\
\mathcal{D}(z)&: = -\frac{1}{\alpha} + 2\left(z + \frac{1}{\alpha} \right)\mathcal{B}(z) - \frac{1-\alpha}{\alpha}\left(\frac{z}{\tau(z)}+ \frac{1}{\alpha} \right)  \\
  &\quad +   \frac{2(1-\alpha)}{\alpha}  \left(\frac{1}{\alpha} + \frac{z}{\tau(z)} \right)^2  \left(\frac{2-3\alpha}{\alpha} c_1(z) - c_2(z)\right)
\end{align*}
with the same $c_1$ and $c_2$ as in Lemma \ref{lem:paris}. At this stage, notice that, with reference to the quantities $\mu(z)$ and $\sigma^2(z)$ that appear in Proposition \ref{prop:clt_R}, one has
that $\mathcal{A}(z) = \mu(z)$, $\mathcal{C}(z) = \left(\mu(z)\right)^2$ and  $\mathcal{D}(z) + \mu(z) - 2\mu(z) \mathcal{B}(z) = \sigma^2(z)$. 
Indeed, the first two identities are evident while the third can be verified via direct computation. Combining the above identities, one gets 
\begin{displaymath}
 \mathbb{E}[R_n(z)] = G'_{R_n(z)}(1) = n\,  \mathcal{A}(z) + O_{u.c.}(1)
\end{displaymath}
and
\begin{align*}
    \operatorname{Var}\left(R_n(z)\right) &= G''_{R_n(z)}(1) + G'_{R_n(z)} (1) - \left(G'_{R_n(z)} (1)\right)^2\\
    &= n^2 \, \left[  \mathcal{C}(z) - \mathcal{A}^2(z)  \right] + n\, \left[ \mathcal{D}(z) + \mathcal{A}(z) - 2\mathcal{A}(z) \mathcal{B}(z) \right] + O_{u.c.}(1)
\end{align*}
and the proof is concluded. 
\end{proof}
\end{paragraph}

\begin{paragraph}{\underline{Step 3} of the proof of the Berry-Esseen theorem \eqref{eq:be_rn}} We prove the following Berry-Esseen lemma for $R_n(z)$.
\begin{lem}
\label{lem: BE lem}
Fix $\zeta_0 $ and $\zeta_1$ such that $0< \zeta_0 < z_0 <\zeta_1 <+\infty$, with the same $z_0$ as in Proposition \ref{prop:clt_Z}. 
If $\xi \in \mathbb{R}$ satisfies 
\begin{equation}
    \label{eq: BE_xi_bound}
    |\xi|\le \mathcal{C}\,  \sigma(z) \, n^\delta
\end{equation} 
for every $z \in [\zeta_0, \zeta_1]$, for some positive constant $\mathcal{C}$, and some $\delta \in (0, 1/6)$, then there exists anoter constant $\tilde{c}$, depending on $\zeta_0$, $\zeta_1$, $\alpha$, $\lambda$ and
$\mathcal C$ such that
\begin{displaymath}
    \left|\varphi_{W_n(z)} (\xi) - e^{- \frac{\xi^2}{2}} \right| \le \tilde{c}\,  e^{ - \frac{\xi^2}{2}}\, n^{ 3\delta - \frac{1}{2}}.
\end{displaymath}
\end{lem}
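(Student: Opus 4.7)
The plan mirrors the proof of Lemma~\ref{lem: BE lem_dir}, with Lemma~\ref{lem:paris} (combined with the Zolotarev approximation of Lemma~\ref{lem:Zol}) playing the role that the Stirling-type expansion of $\Gamma$ played there. First I will write
\begin{displaymath}
\varphi_{W_n(z)}(\xi) = \exp\bigl\{-i\xi\sqrt{n}\,\mu(z)/\sigma(z)\bigr\}\, G_{R_n(z)}(s_n), \qquad s_n := \exp\{i\xi/\sqrt{n\sigma^2(z)}\},
\end{displaymath}
and start from the integral representation $G_{R_n(z)}(s) = e^{nz(s-1)}\, s^{n/\alpha}\, I_n(zs)/I_n(z)$ recalled in the proof of Proposition~\ref{prop: mu_sigma_Rn}, with $I_n(y) = \int_0^{+\infty}x^n\exp\{-x(ny)^{1/\alpha}\}f_\alpha(x)\,\mathrm dx$. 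Splitting $f_\alpha = E_\alpha\cdot(1+\text{bdd remainder})$ via Lemma~\ref{lem:Zol}, then invoking Lemma~\ref{lem:Jed} with $\varepsilon=\delta=0$ (so $\varrho=1/2$) and Lemma~\ref{lem:paris}, I obtain a factorisation
\begin{displaymath}
G_{R_n(z)}(s) = \exp\bigl\{n\psi_z(s)\bigr\}\, \mathfrak{R}^{(n)}(z,s),
\end{displaymath}
where, after collecting all contributions linear in $n$, the action reads
\begin{displaymath}
\psi_z(s) := z(s-1) + \log s - \bigl[g(zs) - g(z)\bigr], \qquad g(y) := \frac{y}{\tau(y)} - \tfrac{1-\alpha}{\alpha}\log\tau(y),
\end{displaymath}
and $\mathfrak{R}^{(n)}(z,s)$ gathers the prefactor ratio $\sqrt{D(z)/D(zs)}$, the correction $\mathcal R_{1/2}/n$, the higher-order remainder $r^{(n)}_{1/2}=O_{u.c.}(n^{-2})$ of Lemma~\ref{lem:paris}, and the Zolotarev remainder controlled by \eqref{eq: Zol_resto}.

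Second, I will check the two key identities
\begin{displaymath}
\psi_z'(1) = \mu(z) \qquad \text{and}\qquad \psi_z''(1) = \sigma^2(z) - \mu(z),
\end{displaymath}
which are the analogues of $f'(1)=-\mathfrak m_{0,\lambda}$ and $f'(1)+f''(1)=-\mathfrak s_{0,\lambda}^2$ in the proof of Lemma~\ref{lem: BE lem_dir}. The first one follows by direct differentiation of $g$ using $\tau'(y)=-\tau(y)/(yD(y))$ and the defining relation $\alpha z+(1-\alpha)\tau(z)=\alpha\tau(z)D(z)$; equivalently, both identities are forced by matching $n\psi_z'(1)+O(1)=G_{R_n(z)}'(1)$ and $n\psi_z''(1)+O(1)=G_{R_n(z)}''(1)-G_{R_n(z)}'(1)^2$ against the expansions of Proposition~\ref{prop: mu_sigma_Rn}. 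Under \eqref{eq: BE_xi_bound} one has $|s_n-1|=O(n^{\delta-1/2})$, so a second-order Taylor expansion of $s\mapsto n\psi_z(s)$ at $s=1$ with cubic remainder, combined with $s_n - 1 = i\xi/\sqrt{n\sigma^2(z)} - \xi^2/(2n\sigma^2(z)) + O(|s_n-1|^3)$ and $(s_n-1)^2 = -\xi^2/(n\sigma^2(z)) + O(|s_n-1|^3)$, yields after exact cancellation of the linear and quadratic terms
\begin{displaymath}
n\psi_z(s_n) - i\xi\sqrt{n}\,\mu(z)/\sigma(z) = -\tfrac{\mu(z)+\psi_z''(1)}{2\sigma^2(z)}\,\xi^2 + n\,O(|s_n-1|^3) = -\tfrac{1}{2}\xi^2 + O(n^{3\delta-1/2}),
\end{displaymath}
where the coefficient of $\xi^2$ equals $1$ precisely thanks to the two identities above.

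A parallel inspection shows $\mathfrak R^{(n)}(z,s_n)=1+O(n^{3\delta-1/2})$: each ingredient (prefactor ratio, $\mathcal R_{1/2}/n$, $r^{(n)}_{1/2}$, and the Zolotarev remainder bounded via \eqref{eq: Zol_resto}) contributes a perturbation of order $O(n^{\delta-1/2})$ or better. Exponentiating the two estimates and using $|e^\eta-1|\le 2|\eta|$ for $|\eta|\le 1$ gives $\varphi_{W_n(z)}(\xi) = e^{-\xi^2/2}[1+O(n^{3\delta-1/2})]$, whence $|\varphi_{W_n(z)}(\xi)-e^{-\xi^2/2}|\le \tilde c\, e^{-\xi^2/2}\, n^{3\delta-1/2}$ as required. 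The main obstacle is the uniformity in $z\in[\zeta_0,\zeta_1]$: Lemma~\ref{lem:paris} provides estimates uniform on compact sets in the complex variable $y$; since $s_n\to 1$ uniformly when $|\xi|\le\mathcal C\sigma(z)n^\delta$ with $\delta<1/2$, the point $y=zs_n$ stays in a fixed compact subset of $\mathbb{C}^+$ for all large $n$ and all $z\in[\zeta_0,\zeta_1]$ (continuity of $\tau$ via Lemma~\ref{lem:xi_Paris}), and Lemma~\ref{lem: sup is continuous} upgrades the pointwise control of each remainder into the uniform bound encoded in the constant $\tilde c$ of the statement.
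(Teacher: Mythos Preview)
Your proposal is correct and follows essentially the same route as the paper. The paper organises the remainder into four explicit factors $\mathfrak R_1^{(n)},\dots,\mathfrak R_4^{(n)}$ (Zolotarev remainder, ratio $\mathcal P^{(n)}_{1/2}/\mathcal F^{(n)}_{1/2}$, the prefactor $\sqrt{D(z)/D(zs)}$, and the cubic Taylor remainder of the exponent) whereas you bundle them into a single $\mathfrak R^{(n)}(z,s)$, but the action $\psi_z(s)$ you write is exactly the negative of the paper's $h(s)-h(1)$ with $h(s)=g(zs)-zs-\log s$, and your two identities $\psi_z'(1)=\mu(z)$, $\psi_z''(1)=\sigma^2(z)-\mu(z)$ are precisely the relations the paper invokes (though it only states the conclusion after ``recalling the definitions of $\mu(z),\sigma^2(z)$ and $g(z)$'') to collapse the exponent to $-\xi^2/2$.
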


\begin{proof}
Combining \eqref{eq: Grn} with Lemma \ref{lem:Zol}, we get
\begin{displaymath}
       \frac{I_n(zs)}{I_n(z)} =   \frac{ J_{0, 0}^{(n)}(zs)  +  R_n(zs) /\chi_\alpha}{J_{0, 0}^{(n)}(z) +  R_n(z) /\chi_\alpha} =  \frac{ J_{0, 0}^{(n)}(zs)}{J_{0, 0}^{(n)}(z) } \, \frac{1 + R_n(zs) /\left(\chi_\alpha \,  J_{0, 0}^{(n)}(zs)\right)}{1 + R_n(z) /\left(\chi_\alpha \,  J_{0, 0}^{(n)}(z)\right)},
\end{displaymath}
for $z \in (0,+\infty)$ and $s \in \mathbb{C}^+$, where: $\chi_\alpha$ is a positive constant depending only on $\alpha$, the $J_{\varepsilon, \delta}^{(n)}$'s are as in \eqref{eq: def_prel_J}, and, for any $y \in \mathbb{C}^+$
\begin{displaymath}
    R_n(y)  := \int_0^{+\infty} x^{n} \, \exp\left\{-x(ny)^\frac{1}{\alpha}\right\} \, E_\alpha(x) \left[ -1  + \frac{f_\alpha(x)}{E_\alpha(x)}\right] \, \mathrm{d}x 
\end{displaymath}
satisfies
\begin{equation}
\label{bound_R_appendix}
    \left|R_n(y)  \right| \le C_\alpha \, J_{0, 1}^{(n)}\left( Re(y) \right)
\end{equation}
with $C_\alpha $ as in \eqref{eq:Zol_bound_1}. Then, combining Lemma \ref{lem: prelim J} and Lemma \ref{lem:paris}, we have
\begin{align*}
     \frac{I_n(zs)}{I_n(z)} 
     & =s^{-\frac{n}{\alpha}+ \frac{2-\alpha}{2\alpha(1-\alpha)}- \frac{1}{\alpha}} \frac{\mathcal{F}^{(n)}_\frac{1}{2} (zs)}{\mathcal{F}^{(n)}_\frac{1}{2} (z)} \, \,\mathfrak{R}_1^{(n)}(z, s) \\
     & = s^{-\frac{n}{\alpha}+ \frac{2-\alpha}{2\alpha(1-\alpha)}- \frac{1}{\alpha}} \frac{\mathcal{P}^{(n)}_\frac{1}{2}(zs)}{\mathcal{P}^{(n)}_\frac{1}{2}(z)} \,\mathfrak{R}_1^{(n)}(z, s) \, \mathfrak{R}_2^{(n)}(z, s),
\end{align*}
where
\begin{align*}
\mathfrak{R}_1^{(n)}(z, s) &:=  \frac{1 + R_n(zs) /\left(\chi_\alpha \,  J_{0, 0}^{(n)}(zs)\right)}{1 + R_n(z) /\left(\chi_\alpha \,  J_{0, 0}^{(n)}(z)\right)} \\
\\
 \mathfrak{R}_2^{(n)}(z, s) &:=  \frac{ \mathcal{P}^{(n)}_\frac{1}{2}(z)}{\mathcal{F}^{(n)}_\frac{1}{2}(z) } \frac{ \mathcal{F}^{(n)}_\frac{1}{2}(zs)}{\mathcal{P}^{(n)}_\frac{1}{2}(zs) }.
\end{align*}
Now, combine the above identities with the expression of $\mathcal{P}^{(n)}_\frac{1}{2}$ in Lemma \ref{lem:paris}, with $g$ as in \eqref{gParis}, to obtain
\begin{align*}
     G_{R_n(z)}(s) & =  e^{nz(s-1)}\, s^{\frac{1}{2(1-\alpha)}} \, \frac{\mathcal{P}^{(n)}_\frac{1}{2}(zs)}{\mathcal{P}^{(n)}_\frac{1}{2}(z)} \,  \,\mathfrak{R}_1^{(n)}(z, s) \, \mathfrak{R}_2^{(n)}(z, s)\\
& =  \exp\left\{ -n \, \left[\left(g(zs) - zs - \log(s) \right) - \left( g(z) - z \right)\right] \right\} \, \mathfrak{R}_1^{(n)}(z, s) \, \mathfrak{R}_2^{(n)}(z, s) \, \mathfrak{R}_3^{(n)}(z, s),
\end{align*}
with
\begin{align*}
  \mathfrak{R}_3^{(n)}(z, s) & :=   s^\frac{1}{2} \,  \sqrt{\frac{\alpha z + \tau(z) (1-\alpha) }{\alpha z s + \tau(z s) (1-\alpha)}}.
\end{align*}
Now, let $\varphi_{W_n(z)}$ denote the characteristic function of the variable $W_n(z)$. That is,
\begin{align*}
    & \varphi_{W_n(z)}(\xi) \\
    & = \exp\left\{-\sqrt{n} \, \frac{i\, \xi\,  \mu(z)}{\sigma(z)} \right\} \, G_{R_n(z)} \left(e^\frac{i \, \xi }{\sqrt{n} \sigma(z)}\right) \\
    & = \exp\left\{-\sqrt{n} \, \frac{i\, \xi\,  \mu(z)}{\sigma(z)} \right\} \, \exp\left\{ -n \, \left[g\left(ze^\frac{i \, \xi }{\sqrt{n} \sigma(z)}\right) - g(z) - z \left(e^\frac{i \, \xi }{\sqrt{n} \sigma(z)} -1\right) - 
    \frac{i \, \xi }{\sqrt{n} \sigma(z)} \right] \right\} \times \\
    &\hspace{13pt}\times \mathfrak{R}_1^{(n)}\left(z, e^\frac{i \, \xi }{\sqrt{n} \sigma(z)}\right) \, \mathfrak{R}_2^{(n)}\left(z, e^\frac{i \, \xi }{\sqrt{n} \sigma(z)}\right) \, \mathfrak{R}_3^{(n)}\left(z, e^\frac{i \, \xi }{\sqrt{n} \sigma(z)}\right).
\end{align*}
If $\xi$ satisfies \eqref{eq: BE_xi_bound}, then \citet[Chapter IV, Lemma 5]{Pet(75)} guarantees that
\begin{displaymath}
    \left|e^\frac{i \, \xi }{\sqrt{n} \sigma(z)}-1\right| \le \left| \frac{\xi }{\sqrt{n} \sigma(z)}\right| \le \mathcal{C} \, n^{\delta - \frac{1}{2}}.
\end{displaymath} 
We apply Taylor's formula to $g \left(e^\frac{i \, \xi }{\sqrt{n} \sigma(z)} \right) - g(z)$ and then to $\left(e^\frac{i \, \xi }{\sqrt{n} \sigma(z)} - 1\right)$ to obtain
\begin{align*}
    &\varphi_{W_n(z)}(\xi)\\
    &\quad= \exp\left\{\sqrt{n} \, \frac{i \, \xi }{\sigma(z)} \left[ -\mu(z) + z +1 -z\, g'(z) \right] - \frac{ \, \xi^2 }{2 \, \sigma^2(z)} \left[ z + zg'(z) - z^2 g''(z)\right]\right\} \times\\
    &\quad\quad\times \mathfrak{R}_1^{(n)}\left(z, e^\frac{i \, \xi }{\sqrt{n} \sigma(z)}\right) \, \mathfrak{R}_2^{(n)}\left(z, e^\frac{i \, \xi }{\sqrt{n} \sigma(z)}\right) \, \mathfrak{R}_3^{(n)}\left(z, e^\frac{i \, 
    \xi }{\sqrt{n} \sigma(z)}\right)\, \mathfrak{R}_4^{(n)}\left(z, \xi\right),
\end{align*}
where 
\begin{align*}
   &\mathfrak{R}_4^{(n)}\left(z, \xi\right)\\
    &\quad := \exp\left\{ nz \left[ 1-g'(z) \right] \left[\left(e^\frac{i \, \xi }{\sqrt{n} \sigma(z)} - 1\right) -\left(\frac{i \, \xi }{\sqrt{n} \sigma(z)} - \frac{\xi^2 }{2 n \sigma^2(z)}\right) \right] \right.\\
   &\quad\quad\quad\quad\quad + \frac{1}{2}\, nz^2  g''(z) \left[ \left(e^\frac{i \, \xi }{\sqrt{n} \sigma(z)} - 1\right)^2 + \frac{\xi^2}{n \sigma^2}\right]\\
   &\quad\quad\quad\quad\quad \left. + \frac{1}{2 } n z^3 \, \left(e^\frac{i \, \xi }{\sqrt{n} \sigma(z)} - 1\right)^3 \int_0^1 g'''\left(z \left[1 + t \left(e^\frac{i \, \xi }{\sqrt{n} \sigma(z)} - 1\right)\right]\right) \, (1-t)^2 \, \mathrm{d} t\right\}.
\end{align*}
Recalling the definitions of $\mu(z), \, \sigma^2(z)$ and $g(z)$, we conclude that
\begin{align*}
   &\varphi_{W_n(z)}(\xi)\\
    &\quad= \exp\left\{- \frac{ \, \xi^2 }{2 } \right\}\mathfrak{R}_1^{(n)}\left(z, e^\frac{i \, \xi }{\sqrt{n} \sigma(z)}\right) \, \mathfrak{R}_2^{(n)}\left(z, e^\frac{i \, \xi }{\sqrt{n} \sigma(z)}\right) \, \mathfrak{R}_3^{(n)}\left(z, e^\frac{i \, \xi }{\sqrt{n} \sigma(z)}\right)\, \mathfrak{R}_4^{(n)}\left(z, \xi\right).
\end{align*}
By combining Lemma \ref{lem:Zol}, Lemma \ref{lem:paris} and \citet[Chapter IV, Lemma 5]{Pet(75)} we prove that the function 
$$
 \mathfrak{R}^{(n)}\left(z, \xi\right)  :=  \mathfrak{R}_1^{(n)}\left(z, e^\frac{i \, \xi }{\sqrt{n} \sigma(z)}\right) \, \mathfrak{R}_2^{(n)}\left(z, e^\frac{i \, \xi }{\sqrt{n} \sigma(z)}\right) \, \mathfrak{R}_3^{(n)}\left(z, e^\frac{i \, \xi }{\sqrt{n} \sigma(z)}\right)\, \mathfrak{R}_4^{(n)}\left(z, \xi \right),
$$
defined for $z \in [\zeta_0, \zeta_1]$ and $\xi$ satisfying \eqref{eq: BE_xi_bound}, is continuous and satisfies
\begin{equation}
\label{eq: remainders}
    \mathfrak{R}^{(n)}\left(z, \xi\right) = 1 + O_{u.c.}\left(n^{ - \frac{1}{2} + 3 \delta}\right).
\end{equation}
We refer to Appendix \ref{app: sec_BE} for a detailed proof of \eqref{eq: remainders}. Lemma \ref{lem: sup is continuous} then guarantees that for every $z \in [\zeta_0, \zeta_1]$ and $\xi$ satisfying \eqref{eq: BE_xi_bound}
\begin{displaymath}
    n^{ \frac{1}{2} - 3 \delta} \left|\mathfrak{R}^{(n)}\left(z, \xi\right)- 1\right| \le  S(z, \xi),
\end{displaymath}
where $S(z, \xi) : = \sup_{n \in \mathbb{N}} \left|\mathfrak{R}^{(n)}\left(z, \xi\right)- 1\right|$ is continuous, hence bounded on compact sets. This allows to conclude that there exists a constant $\tilde{c}$, 
depending on $\zeta_0$, $\zeta_1$, $\alpha$, $\lambda$ and $\mathcal C$ for which $\left|\mathfrak{R}^{(n)}\left(z, \xi\right)- 1\right| \le  \tilde{c}\,   n^{ - \frac{1}{2} + 3 \delta}$. This concludes the proof. 
\end{proof}
\end{paragraph}

\begin{paragraph}{\underline{Step 4} of the proof of the Berry-Esseen theorem \eqref{eq:be_rn}}
\begin{proof}[Proof of the Berry-Esseen Theorem \eqref{eq:be_rn}]
Fix $\zeta_0 $ and $\zeta_1$ such that $0< \zeta_0 < z_0 <\zeta_1 <+\infty$, with the same $z_0$ as in Proposition \ref{prop:clt_Z}. 
Then, combine Lemma \ref{lem: BE lem} with  the well-known inequality \citep[Chapter V, Theorem 2]{Pet(75)} to obtain that 
\begin{align*}
    \left\|F_{W_n(z)} - \Phi \right\|_\infty &\le \int_{| \xi| \le \mathcal{C}\,\sigma(z) \, n^\delta} \left|\frac{\varphi_{W_n(z)} (\xi) - e^{- \frac{\xi^2}{2}}}{\xi} \right| \, \mathrm{d} \xi + \tilde{\mathcal{C}}{n^{-\delta}} \\
    &\le  \int_{-\frac{1}{n}}^{\frac{1}{n}} \left|\frac{\varphi_{W_n(z)} (\xi) - e^{- \frac{\xi^2}{2}}}{\xi} \right| \, \mathrm{d} \xi + \frac{2\tilde{c}  }{n^{ - 3\delta +\frac{1}{2}}} \int_{\frac{1}{n}}^{+\infty} \frac{e^{- \frac{\xi^2}{2}}}{\xi} \, \mathrm{d} \xi + \tilde{\mathcal{C}}{n^{-\delta}}\\
    & =: I_1 + I_2 + \tilde{\mathcal{C}} n^{-\delta}
\end{align*}
hold for any $z \in [\zeta_0, \zeta_1]$, with $\tilde{\mathcal{C}} = \max_{z \in [\zeta_0, \zeta_1]} [\mathcal{C}\sigma(z)]^{-1}$.

To bound $I_1$, combine the triangle inequality, \citet[Chapter IV, Lemma 5]{Pet(75)}, \citet[Section 8.4, Theorem 1, Equation (4)]{CT(97)}, and the elementary inequality $e^{-x} \geq 1-x$, to write 
\begin{align*}
    \left|\frac{\varphi_{W_n(z)}(\xi) - e^{-\frac{\xi^2}{2}}}{\xi} \right| &\le \left|\frac{\varphi_{W_n(z)}(\xi) - 1}{\xi} \right| + \left|\frac{1 - e^{-\frac{\xi^2}{2}}}{\xi} \right|\\
    & \le \left|\mathbb{E} \left[W_n(z)\right] \right|  + \frac{1}{2} \mathbb{E} \left[W_n(z)^2\right]  \left|\xi \right| +\frac{1}{2} \left|\xi \right| .
\end{align*}
The combination of Proposition \ref{prop: mu_sigma_Rn} with Lemma \ref{lem: sup is continuous} entails the existence of some positive constant $M_1$ for which, for any $z \in [\zeta_0, \zeta_1]$, 
\begin{displaymath}
  \left|\mathbb{E} \left[W_n(z)\right] \right|  \leq \frac{\operatorname{Var}(R_n(z))}{n \sigma^2(z)} \le M_1\ .
\end{displaymath}
Analogously, there exists a positive constant $M_1$ for which, for any $z \in [\zeta_0, \zeta_1]$, 
\begin{displaymath}
\mathbb{E} \left[W_n^2(z)\right] \le M_2\ . 
\end{displaymath}
Whence, 
\begin{displaymath}
    I_1 \le \frac{2 M_1}{n} + \frac{M_2 + 1}{2n^2}.
\end{displaymath}
To bound $I_2$, notice that
\begin{displaymath}
    I_2 =  \frac{\tilde{c}}{n^{ - 3\delta + \frac{1}{2}}}\, \Gamma\left(0, \frac{1}{2n^2}\right),
\end{displaymath}
where $\Gamma(a, x):= \int_x^{+\infty} t^{a-1} e^{-t} \mathrm{d}t$ denotes the incomplete Gamma function. Now, recall that, as $n \to +\infty$
\begin{displaymath}
   \Gamma\left(0, \frac{1}{2n^2}\right) \sim 2\log(n). 
\end{displaymath}
See, e.g., \citet[ Equation \href{https://dlmf.nist.gov/8.4.E4}{(8.4.4)} and Equation \href{https://dlmf.nist.gov/6.6.E4}{(6.6.2)}]{nist}. Whence, there exists some positive constant $\tilde{c}_1$ for which
\begin{displaymath}
    I_2 \le   \tilde{c}_1 \log(n)  \, n^{ - \frac{1}{2} + 3\delta}.
 \end{displaymath}
To conclude, for some positive constants $C_1, C_2$, we can write 
\begin{displaymath}
    \left\|F_{W_n(z)} - \Phi \right\|_\infty \le    C_1 n^{-1} + C_2 \log(n)  \, n^{ - \frac{1}{2} + 3\delta}  +  \tilde{\mathcal{C}}{n^{-\delta}}.
 \end{displaymath}
It is easy to see that for every $\delta \in (0, 1/6)$, $\min\left(\delta, -3\delta +\frac{1}{2}\right) \ge \frac{1}{8}$, which produces \eqref{eq:be_rn} by choosing $\delta = 1/8$. 
\end{proof}
\end{paragraph}


\subsubsection{Proof of Equation \eqref{eq: remainders}}\label{app: sec_BE}
Fix $\zeta_0 $ and $\zeta_1$ such that $0< \zeta_0 < z_0 <\zeta_1 <+\infty$, with the same $z_0$ as in Proposition \ref{prop:clt_Z}, and $z \in [\zeta_0, \zeta_1]$. Set $\tau(zs) := \xi(1/(\alpha zs))$, with the same function $\xi$ as in Lemma \ref{lem:xi_Paris}. Then, there exists some positive  $\tilde{\delta}$, depending only on $\zeta_0, \zeta_1$, for which the map $s \mapsto \tau(zs)$ is holomorphic in the disc $|s-1| \leq \tilde{\delta}$.
Preliminarily, note that if $s = e^{\frac{i\xi}{\sqrt{n} \sigma(z)}}$ with $\xi$ satisfying \eqref{eq: BE_xi_bound},  \citet[Chapter IV, Lemma 5]{Pet(75)} entails 
\begin{equation} \label{smenouno}
\left|s-1 \right| = \left|e^{\frac{i\xi}{  \sqrt{n} \sigma(z)}}-1 \right|  \le \left| \frac{\xi }{ \sqrt{n} \sigma(z)}\right| \le \mathcal{C} \, n^{\delta - \frac{1}{2}},
\end{equation}
so that there exists $\bar{n} \in \N$ such that for every $n \ge \bar{n}$, 
\begin{displaymath}
\left| e^{\frac{i\xi}{  \sqrt{n} \sigma(z)}} - 1\right| \le \mathcal{C} n^{\delta-\frac{1}{2}} \le \tilde{\delta}.
\end{displaymath}

Concerning the behaviour of $\mathfrak{R}^{(n)}_1$, write 
\begin{displaymath}
\mathfrak{R}^{(n)}_1(z, s) = \frac{1+ \mathfrak{H}(s) }{1+ \mathfrak{H}(1) } = 1+ \left[\mathfrak{H}(s) -  \mathfrak{H}(1)\right] \left[ 1- \mathfrak{H}(1)\right] + \frac{ \left[\mathfrak{H}(s) -  \mathfrak{H}(1)\right]  \, \mathfrak{H}^2(1)}{1+ \mathfrak{H}(1)}, 
\end{displaymath}
 with
\begin{displaymath}
\mathfrak{H}(s) := \frac{R_n(zs)}{\chi_\alpha \, J_{0, 0}^{(n)}(zs)}.
\end{displaymath}
The combination of \eqref{bound_R_appendix}, Lemma \ref{lem: prelim J} and Lemma \ref{lem:paris} yields, after straightforward computations,
\begin{align*}
 | \mathfrak{H}(1)| &\le \frac{C_\alpha \, J_{0, 1}^{(n)} (z) }{\chi_\alpha \, J_{0, 0}^{(n)} (z) } \\
& = \frac{1}{n} \, \frac{C_\alpha \, \alpha^{\frac{\alpha}{1-\alpha}}}{\chi_\alpha} \, \frac{\tau(z)}{z}   \frac{1+ \frac{1}{n} \mathcal{R}_{-\frac{1}{2}}(z) + 
r_{-\frac{1}{2}}^{(n)} (z)}{1+ \frac{1}{n} \mathcal{R}_{\frac{1}{2}}(z) + r_{\frac{1}{2}}^{(n)} (z)} \\
& =  \frac{1}{n} \phi(z) \left[1 +r_n \right],
\end{align*}
where $\phi(z) := C_\alpha \, \alpha^{\frac{\alpha}{1-\alpha}}\chi_{\alpha}^{-1} \, [\tau(z)/z]$ is continuous and strictly positive on $[\zeta_0, \zeta_1]$. Moreover, from Lemma \ref{lem:paris}, 
\begin{displaymath}
n r_n := n \left[\frac{1+ \frac{1}{n} \mathcal{R}_{-\frac{1}{2}}(z) + r_{-\frac{1}{2}}^{(n)} (z)}{1+ \frac{1}{n} \mathcal{R}_{\frac{1}{2}}(z) + r_{\frac{1}{2}}^{(n)} (z)} - 1\right]
\end{displaymath}
converges uniformly to some continuous function on $[\zeta_0, \zeta_1]$. Then, we can conclude that
\begin{displaymath}
\mathfrak{H}(1) = O_{u.c.}\left(\frac{1}{n}\right)
\end{displaymath}
and, by the same argument, 
\begin{displaymath}
\frac{ \mathfrak{H}^2(1)}{1+ \mathfrak{H}(1)} = O_{u.c.}\left(\frac{1}{n^2}\right).
\end{displaymath}
Now, by the fundamental theorem of calculus, 
\begin{displaymath} 
\mathfrak{H}(s) - \mathfrak{H}(1) = (s-1)\, \int_0^1 \mathfrak{H}'(1 + t(s-1))\,   \mathrm{d} t.
\end{displaymath}
It can be easily seen that $s \mapsto \mathfrak{H}(s)$ is holomorphic in $\{w \in \mathbb{C} \, : \, Re(w) >0\}$. The combination of this fact with the bound \eqref{smenouno} entails that
there exists a continuous function $G:[\zeta_0, \zeta_1] \to (0, +\infty)$, not depending on $s$ and $n$, for which
\begin{displaymath} 
|\mathfrak{H}(s) - \mathfrak{H}(1) | \le G(z) \, \mathcal{C} \, n^{\delta - \frac{1}{2}}.
\end{displaymath}
Whence,
\begin{displaymath} 
\mathfrak{R}^{(n)}_1(z, s) = 1 + O_{u.c.}\left(n^{\delta - \frac{1}{2}}\right).
\end{displaymath}

Concerning the behaviour of $\mathfrak{R}^{(n)}_2$, note that $\mathfrak{R}_2^{(n)}(z, s)$ is the product of two terms. As to the former, apply Lemma \ref{lem:paris} to obtain 
\begin{displaymath} 
\frac{ \mathcal{P}^{(n)}_\frac{1}{2}(z)}{\mathcal{F}^{(n)}_\frac{1}{2}(z) } = \frac{1}{1+ \frac{1}{n} \mathcal{R}_{\frac{1}{2}}(z) + r^{(n)}_{\frac{1}{2}}(z)} =: 
\frac{1}{1+\varepsilon(z)} =  1-\varepsilon(z) + \frac{\varepsilon(z)^2}{1+\varepsilon(z)}.
\end{displaymath}
Whence,
\begin{displaymath} 
\left| \frac{ \mathcal{P}^{(n)}_\frac{1}{2}(z)}{\mathcal{F}^{(n)}_\frac{1}{2}(z) }  - 1\right| 
\le \left|  \frac{1}{n} \mathcal{R}_{\frac{1}{2}}(z) + r^{(n)}(z) \right| + \left| \frac{\varepsilon(z)^2}{1+\varepsilon(z)}\right|.  
\end{displaymath}
For the latter, Lemma \ref{lem:paris} again shows that
\begin{displaymath} 
\left| \frac{ \mathcal{F}^{(n)}_\frac{1}{2}(zs)}{\mathcal{P}^{(n)}_\frac{1}{2}(zs) }  - 1  \right| = \left| \frac{1}{n} \mathcal{R}_{\frac{1}{2}}(zs)+ r^{(n)}(zs)\right|.
\end{displaymath}
If $|s - 1| < \tilde{\delta}$, we conclude that
\begin{displaymath} 
\frac{ \mathcal{P}^{(n)}_\frac{1}{2}(z)}{\mathcal{F}^{(n)}_\frac{1}{2}(z) } = 1  + O_{u.c.}\left(\frac{1}{n}\right) \hspace{10pt} \text{and} 
\hspace{10pt} \frac{ \mathcal{F}^{(n)}_\frac{1}{2}(zs)}{\mathcal{P}^{(n)}_\frac{1}{2}(zs) } = 1+ O_{u.c.}\left(\frac{1}{n}\right).
\end{displaymath}
Whence,
\begin{displaymath} 
\mathfrak{R}_2^{(n)} \left(z, s\right) =  \frac{ \mathcal{P}^{(n)}_\frac{1}{2}(z)}{\mathcal{F}^{(n)}_\frac{1}{2}(z) } \frac{ \mathcal{F}^{(n)}_\frac{1}{2}(zs)}{\mathcal{P}^{(n)}_\frac{1}{2}(zs) } = 1+ O_{u.c.}\left(\frac{1}{n}\right).
\end{displaymath}

Concerning the behaviour of $\mathfrak{R}^{(n)}_3$, rewrite $\mathfrak{R}_3^{(n)}(z, s)$ as 
\begin{displaymath} 
\mathfrak{R}_3^{(n)}(z, s) = s^{\frac{1}{2}} \, \left( 1+ \frac{\alpha z (s-1) + (1-\alpha) \left[\tau(zs) - \tau(z)\right]}{\alpha z + (1-\alpha) \tau(z)}\right)^{-\frac{1}{2}} =: s^{\frac{1}{2}} \, \left( 1+ H_z(s)\right)^{-\frac{1}{2}}.
\end{displaymath}
Now, if $z \in [\zeta_0, \zeta_1]$ and $|s - 1| < \tilde{\delta}$, there exists some positive constant $M_3$ for which
\begin{displaymath} 
 \left| \left( 1+ H_z(s)\right)^{-\frac{1}{2}} -1 \right| \leq M_3\left| H_z(s)\right|\ .
\end{displaymath}
Moreover, Lemma \ref{lem:xi_Paris} entails
\begin{displaymath} 
| \tau(zs) - \tau(z) | \le z \int_1^s \left| \frac{\mathrm{d}}{\mathrm{d}s} \tau(z s) \right| \le C_1(z) \, | s-1| 
\end{displaymath}
for some continuous function $C_1$ of $z$. Whence, 
\begin{displaymath} 
\left| H_z(s)\right| \le \left[ \frac{\alpha +C_1(z)}{\alpha z + (1-\alpha) \tau(z)} \right] \, | s-1|  = : C_2(z) \, |s-1| 
\end{displaymath}
where the function $C_2$ is continuous. Thus, if $s$ fulfills \eqref{smenouno}, we get
\begin{align*}
\left|H_z\left(s\right)\right| \le  \,C_2(z) \, \mathcal{C} \, n^{\delta-\frac{1}{2}}
\end{align*}
and, consequently,
\begin{displaymath}
 \mathfrak{R}_3^{(n)}\left(z, s\right) = s^\frac{1}{2}   \, \left(  1+ H_z\left(s\right)\right)^{-\frac{1}{2}} = 1+O_{u.c.}\left(n^{\delta-\frac{1}{2}}\right).
\end{displaymath}

Concerning the behaviour of $\mathfrak{R}^{(n)}_4$, elementary properties of the exponential combined with \citet[Chapter IV, Lemma 5]{Pet(75)} yield
\begin{align*}
  \mathfrak{R}_4^{(n)}\left(z, \xi\right) & := \exp\left\{ nz \left[ 1-g'(z) \right] \frac{C\,  \xi^3 }{6 \, n^\frac{3}{2} \sigma(z)^3} + \frac{1}{2}\, nz^2  g''(z) \, \frac{ C \, \xi^3 }{n^\frac{3}{2} \sigma(z)^3} \right.\\
   & \left. + \frac{1}{2 } n z^3 \, \frac{ C^3 \, \xi^3 }{n^\frac{3}{2} \sigma(z)^3}  \int_0^1 g'''\left(z \left[1 + t \left(e^\frac{i \, \xi }{\sqrt{n} \sigma(z)} - 1\right)\right]\right) \, (1-t)^2 \, \mathrm{d} t\right\}
\end{align*}
for a suitable $C \in \mathbb{C}$ with $|C|<1$. Note that, if $|s-1| < \tilde{\delta}$, it also holds that $|1+t(s-1)| < \tilde{\delta}$ for all $t \in [0, 1]$. 
Moreover, the function $t \mapsto g'''(z[1 + t(s-1)])$ is bounded on $[0, 1]$, if $z \in [\zeta_0, \zeta_1]$. 
Therefore, we get
\begin{displaymath}
\left|\frac 16 z \left[ 1-g'(z) \right] + \frac 12 z^2  g''(z) + \frac 12 z^3\int_0^1 g'''(z[1 + t(s-1)])(1-t)^2 \, \mathrm{d} t \right| \le     G(z)
\end{displaymath}
for some continuous function $G:[\zeta_0, \zeta_1] \to (0,+\infty)$ not depending on $\xi$ and $n$.  To conclude, apply again \citet[Chapter IV, Lemma 5]{Pet(75)} to show that, for any $\xi$ satisfying \eqref{eq: BE_xi_bound}, 
it holds
\begin{displaymath}
\left|  \mathfrak{R}_4^{(n)}\left(z, \xi\right)-1 \right| \le G(z) \frac{ |\xi|^3 }{n^\frac{1}{2} \sigma(z)^3}  \le G(z) \, \mathcal{C}^3 \, n^{3\delta - \frac{1}{2}}.
\end{displaymath}
Whence,
\begin{displaymath} 
\mathfrak{R}_4^{(n)}\left(z, \xi \right)  = 1+O_{u.c.}\left(n^{3\delta-\frac{1}{2}}\right).
\end{displaymath}

Altogether, we have proved that for $z \in [\zeta_0, \zeta_1]$,  $s = e^{\frac{i\xi}{\sqrt{n} \sigma(z)}}$ with $\xi$ satisfying \eqref{eq: BE_xi_bound} 
\begin{align*}
\mathfrak{R}_1^{(n)}\left(z, s\right) &= 1+ O_{u.c.}\left(n^{\delta-\frac{1}{2}}\right) \\
\mathfrak{R}_2^{(n)}\left(z, s\right) &= 1+ O_{u.c.}\left(n^{-1}\right) \\
\mathfrak{R}_3^{(n)}\left(z, s\right) &= 1+O_{u.c.}\left(n^{\delta-\frac{1}{2}}\right) \\
\mathfrak{R}_4^{(n)}\left(z, \xi \right) &= 1+O_{u.c.}\left(n^{3\delta-\frac{1}{2}}\right).
\end{align*}
Recalling that $\mathfrak{R}^{(n)}(z, \xi)$ is defined as the product of these four terms, and since $0> 3\delta-\frac{1}{2} > \delta - \frac{1}{2} > -1$ for $\delta \in \left(0, \frac{1}{6}\right)$, we conclude that $\mathfrak{R}^{(n)}(z, \xi) = 1+O_{u.c.}\left(n^{3\delta-\frac{1}{2}}\right)$.


\renewcommand{\theequation}{B.\arabic{equation}}

\setcounter{equation}{0}

\subsection{Technical results on Theorem \ref{thm_main} for $\alpha=0$}\label{app3}


\subsubsection{Asymptotic expansions \eqref{mom_m} and \eqref{mom_v}}\label{app31}

For $n\in\mathbb{N}$ let $(B_{1},\ldots,B_{n})$ be independent random variables such that $B_{i}$ is distributed according to a Bernoulli distribution with parameter $\theta/(\theta+i+1)$, for $i=1,\ldots,n$. Then, it holds
\begin{equation}\label{sum_rep0}
K_{n}=\sum_{i=1}^{n}B_{i}.
\end{equation}
\citep[Chapter 3]{Pit(06)}. Let $G_{K_n}$ be the probability generating function of $K_{n}$. Based on representation \eqref{sum_rep0}, it is straightforward to show that for $s>0$
\begin{displaymath}
G_{K_{n}}(s)=\frac{[s\theta]_{(n)}}{[\theta]_{(n)}}=\frac{\Gamma(s\theta+n)\Gamma(\theta)}{\Gamma(s\theta)\Gamma(\theta+n)},
\end{displaymath}
so that
\begin{equation}\label{app_probgen1}
G^{'}_{K_{n}}(1):=\left.\frac{\ddr}{\ddr s}G_{K_{n}}(s)\right|_{s=1}=\theta \left[ \psi(\theta+n) - \psi(\theta) \right] 
\end{equation}
and
\begin{equation}\label{app_probgen2}
G^{''}_{K_{n}}(1):=\left.\frac{\ddr^{2}}{\ddr s^{2}}G_{K_{n}}(s)\right|_{s=1}= \theta^2 \left\{ \psi^{(1)}(\theta+n) - \psi^{(1)} (\theta) + \left[ \psi(\theta+n) - \psi(\theta) \right]^2 \right\}
\end{equation}
where $\psi$ denotes the Digamma function, defined for $z \in \mathbb{C}$ as $\psi(z) := \frac{\mathrm{d}}{\mathrm{d}z} \log\Gamma(z)$, and $\psi^{(1)}$ denotes its derivative, i.e. the Trigamma function. If $\theta=\lambda n$ in \eqref{app_probgen1} then
\begin{equation}\label{app_probgen11}
G^{'}_{K_{n}}(1)=\lambda n \left[ \psi((\lambda+1)n) - \psi(\lambda n) \right].
\end{equation}
If $\theta=\lambda n$ in \eqref{app_probgen2} then
\begin{equation}\label{app_probgen21}
G^{''}_{K_{n}}(1)=(\lambda n)^2 \left\{ \psi^{(1)}((\lambda+1)n) - \psi^{(1)} (\lambda n) + \left[ \psi((\lambda+1)n) - \psi(\lambda n) \right]^2 \right\}.
\end{equation}
The formulae in Section \ref{sec31} follow from an application of \citet[ Equation \href{https://dlmf.nist.gov/5.11.E2}{(5.11.2)}]{nist} to \eqref{app_probgen11} and \eqref{app_probgen21}, respectively.


\subsubsection{Proof of equation \eqref{mom4Kn}}\label{app:mom4Kn_dir}

Arguing as in Appendix \ref{app:mom4Kn}, one gets
 \begin{align*}
     &  \mathbb{E} \left[\left(K_n - \mathbb{E} \left[ K_n\right] \right)^4 \right]     \\
      & \quad =  - 3  \mathbb{E}[K_n]^4  + 6  \mathbb{E}[K_n]^3 + 6 \mathbb{E}[K_n]^2  \mathbb{E} \left[(K_n)_{\downarrow 2} \right]  -12  \mathbb{E}[K_n]  \mathbb{E} \left[(K_n)_{\downarrow 2}\right]\\
     & \quad  \quad   - 4 \mathbb{E}[K_n]  \mathbb{E} \left[(K_n)_{\downarrow 3} \right] + 6   \mathbb{E} \left[(K_n)_{\downarrow 3} \right] +  \mathbb{E} \left[(K_n)_{\downarrow 4} \right].
      \end{align*}
Now,
 \begin{align*}
   \mathbb{E} \left[(K_n)_{\downarrow 1} \right] & = n \cdot \lambda\,  \Phi_0(n, \lambda)\\
   \mathbb{E} \left[(K_n)_{\downarrow 2} \right] & = n^2 \cdot \lambda^2 \, \left[  \Phi_0^2(n, \lambda) +  \Phi_1(n, \lambda)\right] \\
   \mathbb{E} \left[(K_n)_{\downarrow 3} \right] & = n^3 \cdot \lambda^3 \, \left[\Phi_0^3(n, \lambda) + 3 \Phi_0(n, \lambda) \Phi_1(n, \lambda) + \Phi_2(n, \lambda) \right] \\       
   \mathbb{E} \left[(K_n)_{\downarrow 4} \right] & = n^4 \cdot \lambda^4 \, \left[\Phi_0^4(n, \lambda) + 6 \Phi_0^2(n, \lambda) \Phi_1(n, \lambda) + 4 \Phi_0(n, \lambda) \Phi_2(n, \lambda) \right.\\
      & \quad \quad \quad \quad \left.+ 3 \Phi_1^2(n, \lambda) + \Phi_3(n, \lambda) \right]
 \end{align*}
where
\begin{displaymath}
\Phi_i(n, \lambda) := \psi^{(i)} \left(n (\lambda+1)\right)  -\psi^{(i)}\left(n \lambda\right)
\end{displaymath}
and $\psi^{(i)}$ denotes the polygamma function \citep[Section \href{http://dlmf.nist.gov/5.15} {(5.15)}]{nist}. Making use of the asymptotic expansions of the polygamma functions for large argument 
\citep[Equation \href{http://dlmf.nist.gov/5.11.E2} {(5.11.2)}]{nist}, one can write
 \begin{align*}
 \Phi_0(n, \lambda) & = L + \frac{1}{2 \lambda (\lambda+1) n} + O \left(n^{-2}\right)\\
 \Phi_1(n, \lambda) & = - \frac{1}{\lambda(\lambda+1) n} + O \left(n^{-2}\right)\\
  \Phi_2(n, \lambda) & =  O \left(n^{-2}\right)\\
 \Phi_3(n, \lambda) & = O \left(n^{-3}\right)
  \end{align*}
 where $L  = \log \left(\frac{\lambda+1}{\lambda} \right)$. Whence,
  \begin{align*}
   \mathbb{E} \left[(K_n)_{\downarrow 1} \right] & = n \cdot \lambda L + \frac{1}{2(\lambda+1)} + O\left(n^{-1}\right)\\
     \mathbb{E} \left[(K_n)_{\downarrow 2} \right] & = n^2 \cdot \lambda^2 L^2 + n\cdot  \frac{\lambda}{\lambda+1} (L-1) +O(1)\\
    \mathbb{E} \left[(K_n)_{\downarrow 3} \right] & = n^3 \cdot \lambda^3 L^3 + n^2 \cdot 3 \frac{\lambda^2}{\lambda+1} \left(\frac{1}{2} L ^2 -L\right) + O(n)\\      
       \mathbb{E} \left[(K_n)_{\downarrow 4} \right] & = n^4 \cdot \lambda^4L^4 + n^3 \cdot 2\frac{\lambda^3}{\lambda+1} (L^3 - 3L^2) + O\left(n^2\right).
 \end{align*}
 Since $\mathbb{E} \left[(K_n)_{\downarrow 1} \right] = \mathbb{E}[K_n]$, this also gives
 \begin{align*}
        \mathbb{E} \left[K_n\right]^2 & = n^2 \cdot \lambda^2 L^2 + n\cdot  L\frac{\lambda}{\lambda+1}+O(1)\\
         \mathbb{E} \left[K_n\right]^3 & = n^3 \cdot \lambda^3 L^3 + n^2\cdot  3 L^2\frac{\lambda^2}{2(\lambda+1)}+O(n)\\
          \mathbb{E} \left[K_n\right]^4 & = n^4 \cdot \lambda^4 L^4 + n^3\cdot  2L^3\frac{\lambda^3}{\lambda+1}+O\left(n^2\right).
 \end{align*}   
It follows that
\begin{displaymath}
\mathbb{E} \left[\left(K_n - \mathbb{E} \left[ K_n\right] \right)^4 \right] =  n^4 \cdot \mathfrak{A}( \lambda) + n^3 \cdot \mathfrak{B}( \lambda) + O\left(n^2\right)
\end{displaymath}
 with 
      \begin{displaymath}
      \mathfrak{A}(\lambda) = \lambda^4 L^4 \left(-3+6-4+1\right)
      \end{displaymath}
      and
      \begin{align*}
       \mathfrak{B}( \lambda) & = 2 \,\lambda^3  L \left\{ -3\frac{L^2}{\lambda+1} + 3 L^2 + 3 L \left[ \frac{2}{\lambda+1}L - \frac{1}{\lambda+1} \right] -6  L^2 \right. \\
       &\left. \quad - 2 \left[ L^2 \frac{1}{2(\lambda+1)} + \frac{3}{\lambda+1} \left(\frac{L^2}{2} -L\right)\right]  +3L^3 + \frac{1}{\lambda+1} (L^2 -3L) \right\}
      \end{align*}
      Straightforward algebraic computations show that
      \begin{displaymath}
       \mathfrak{A}( \lambda) =  \mathfrak{B}(\lambda) =0, 
       \end{displaymath} 
       proving equation \eqref{mom4Kn}.

\subsubsection{Proof of equation \eqref{eq: remainders_dir}}\label{app: sec_BE_dir}

Preliminary note that if $s = e^{\frac{i\xi}{\sqrt{n} \mathfrak{s}_{0,\lambda}(z)}}$ with $\xi$ satisfying \eqref{eq: BE_xi_bound_dir}, then  \citet[Chapter IV, Lemma 5]{Pet(75)} entails 
$$
\left|s-1 \right| = \left|e^{\frac{i\xi}{  \sqrt{n} \mathfrak{s}_{0,\lambda}(z)}}-1 \right|  \le \left| \frac{\xi }{ \sqrt{n} \mathfrak{s}_{0,\lambda}(z)}\right| \le \mathcal{C} \, n^{\delta - \frac{1}{2}}. 
$$
In particular, there exists $\bar{n} \in \N$ such that, for every $n \ge \bar{n}$, 
$$ 
\left| \frac{\xi }{ \sqrt{n} \mathfrak{s}_{0,\lambda}(z)}\right| \le \frac{\pi}{3}.  
$$
From now on, we assume $n \ge \bar{n}$. Concerning the behaviour of $\mathfrak{R}^{(n)}_1$, rewrite $\mathfrak{R}^{(n)}_1(s)$ as
$$\mathfrak{R}_1^{(n)}(s) := 1 + \mathfrak{R}(n (s \lambda+1) ) + \mathfrak{R}(n \lambda) -  \mathfrak{R}(n s \lambda) -  \mathfrak{R}(n( \lambda+1))  + \mathfrak{E}(s),$$
where 
$$
\mathfrak{E}(s) = \mathfrak{R}(n (s \lambda+1) )   \mathfrak{R}(n \lambda) - \left[ \mathfrak{R}(n (s \lambda+1) ) + \mathfrak{R}(n \lambda) \right] \, \varepsilon(s) + \frac{\varepsilon(s)^2}{1+\varepsilon(s)}
$$
with 
$$
\varepsilon(s) : =  \mathfrak{R}(n s \lambda) + \mathfrak{R}(n( \lambda+1)) +  \mathfrak{R}(n s \lambda)    \mathfrak{R}(n( \lambda+1)). 
$$
From \eqref{eq:gammabound}, it holds
$$ 
\left| \mathfrak{R}(w) \right| \le  \frac{3}{2 \pi^2 \, \left| w \right|}   \le  \frac{3}{2 \pi^2 \, n\lambda} 
$$
for every $w \in \{n( s \lambda+1), ns\lambda, n(\lambda+1), n\lambda\}$, since $|w| \ge n\lambda$. This implies 
$$
|\mathfrak{R}_1^{(n)}(s) - 1 | \le  \frac{1}{n} \, \frac{6}{ \pi^2 \lambda}  + |\mathfrak{E}(s)| \hspace{10pt} \text{and } \hspace{10pt}  |\mathfrak{E}(s)| = O_{u.c.} \left(\frac{1}{n^2}\right). 
$$
Thus, 
$$  
\mathfrak{R}_1^{(n)}\left(s\right) = 1+ O_{u.c.}\left(n^{-1}\right).
$$
Concerning the behaviour of $\mathfrak{R}^{(n)}_2$, rewrite $\mathfrak{R}^{(n)}_2$ as 
$$ 
\mathfrak{R}_2^{(n)}(s)  = \left(1 + \frac{ s-1}{s\lambda+1}\right)^\frac{1}{2}.   
$$
Then, 
\begin{align*}
\left|  \mathfrak{R}_2^{(n)}(s) -1 \right|  & = \left| \left(1 + \frac{ s-1}{s\lambda+1}\right)^\frac{1}{2} - 1\right|\\
&=    \left|\frac{ s-1}{s\lambda+1} \right|  \sum_{k = 0}^{+\infty} \binom{ \frac{1}{2}}{k+1} \,  \left[\frac{ s-1}{s\lambda+1} \right]^k \le  \left|\frac{ s-1}{s\lambda+1} \right|  C_\varepsilon(s),
\end{align*}
where the last identity holds if  $|\frac{ s-1}{s\lambda+1}|\le 1-\varepsilon$, for $C_\varepsilon$ a continuous function only depending on $\varepsilon>0$. Since, for $\xi$ satisfying \eqref{eq: BE_xi_bound_dir}, $|s\lambda + 1|>\lambda$, there exists $\bar{n} \in \N$ such that for every $n \ge \bar{n}$ 
$$
 \left|\frac{ s-1}{s\lambda+1} \right| \le \frac{\mathcal{C}}{\lambda} n^{\delta-\frac{1}{2}} \le 1-\varepsilon
$$
and
$$
\left|  \mathfrak{R}_2^{(n)}(s) -1 \right|  \le  C_\varepsilon(s) \, \frac{\mathcal{C}}{\lambda}\,  n^{\delta-\frac{1}{2}}.
$$
We conclude that
$$
\mathfrak{R}_2^{(n)}\left(s\right)  = 1+ O_{u.c.}\left(n^{\delta-\frac{1}{2}}\right).
$$
Concerning the behaviour of $\mathfrak{R}^{(n)}_3$, upon noting that the function $f$ is holomorphic in the disc of center 1 and radius 1, argue as for $\mathfrak{R}_4^{(n)}$ of the case $\alpha \in (0, 1)$ to conclude that
$$
\mathfrak{R}_3^{(n)}\left(s\right)  = 1+O_{u.c.}\left(n^{3\delta-\frac{1}{2}}\right).
$$

Altogether, we proved that, for $s = e^{\frac{i\xi}{\sqrt{n} \mathfrak{s}_{0,\lambda}(z)}}$ with $\xi$ satisfying \eqref{eq: BE_xi_bound_dir} and for every $n \ge \bar{n}$, 
\begin{align*}
\mathfrak{R}_1^{(n)}\left(s\right) &= 1+ O_{u.c.}\left(n^{-1}\right) \\
\mathfrak{R}_2^{(n)}\left(s\right) &= 1+ O_{u.c.}\left(n^{\delta-\frac{1}{2}}\right) \\
\mathfrak{R}_3^{(n)}\left(s\right) &= 1+O_{u.c.}\left(n^{3\delta-\frac{1}{2}}\right).
\end{align*}
Recalling that $\mathfrak{R}^{(n)}(\xi)$ is defined as the product of these three terms, and since $0> 3\delta-\frac{1}{2} > \delta-\frac{1}{2} > -1$ for any $\delta \in \left(0, \frac{1}{6}\right)$, 
we conclude that $\mathfrak{R}^{(n)}(z, \xi) = 1+O_{u.c.}\left(n^{3\delta-\frac{1}{2}}\right)$.


\end{document}